 \newtheorem{thm}{Theorem}[section]
 \newtheorem{corollary}[thm]{Corollary}
 \newtheorem{lemma}[thm]{Lemma}
 \newtheorem{Proposition}[thm]{Proposition}
 \theoremstyle{definition}
 \theoremstyle{remark}
 \newtheorem{remark}[thm]{Remark}
 \numberwithin{equation}{section}
\begin{document}

%
%

\title[Umbilical points of a $W$-congruence]
{\LARGE Stable umbilical points of a  $W$-congruence}

\author[M.Craizer]{Marcos Craizer}

\address{%
Departamento de Matemática- PUC-Rio\br
Rio de Janeiro, RJ, Brasil}
\email{craizer@puc-rio.br}

\author[R.A.Garcia]{Ronaldo Garcia}

\address{%
Instituto de Matemática e Estatística- UFG\br
Goiânia, GO, Brasil}
\email{ragarcia@ufg.br}

\thanks{The authors want to thank CNPq and CAPES (Finance Code 001)  for financial support during the preparation of this manuscript. \newline E-mail of the corresponding author: craizer@puc-rio.br}

\subjclass{ 53A20, 53A25}

\keywords{Projective differential geometry, W-congruences, Stable umbilical points.}

\date{December 12, 2025}

\begin{abstract}
The class of $W$-congruences is a central object of Projective Differential Geometry. Nevertheless, their singularities has not been extensively studied. In this paper we prove a characterization of $W$-congruences that allow us to study their umbilical points. We describe examples of isolated stable umbilical points of type $A_m$,  $m\in\{1,2,3,4\}$, and discuss some partial results concerning the generalization of these results to any $m\in\mathbb{N}\cup\{\infty\}$. 
\end{abstract}

\maketitle

\section{Introduction}

A {\it line congruence} is a smooth bi-dimensional collection of lines in $3$-space. It is a central object of {\it Projective Differential Geometry}. 
Line congruences determine a relation between their focal sets, the so-called {\it transformations of surfaces}. When 
this relation preserves the conformal class of the projective metrics, the line congruence is said to be a {\it Weingarten congruence}, shortly, a {\it W-congruence}. 
Classes of projective surfaces are solutions of certain PDE's, many times associated with a physical phenomena. Their transformations describe methods for obtaining new solutions from old ones.  According to \cite{Eisenhart}, every important transformation is a $W$-congruence. Some classical examples of $W$-congruences are
{\it Bäcklund} transformations between pseudo-spheres, {\it Tzitzéica} transformations between affine spheres and {\it Demoulin} transformations of projectively minimal surfaces. The normal lines to surfaces in Weingarten class, which includes constant mean curvature and constant Gaussian curvature, are also examples of $W$-congruences
(\cite{Sasaki}).

In this paper we study umbilical points of a $W$-congruence. When we began the study of such points, we had two problems in mind. The first one is related to Loewner's conjecture (\cite{Titus}). It is proved  in  \cite{Gutierrez-Sotomayor} that Loewner's conjecture holds in the particular case of a normal congruence of a constant mean curvature surface, and we wanted to see if it holds for a general $W$-congruence. The second one is the existence of stable umbilical points in the space of $W$-congruences. This second problem has not an obvious answer, since they do not exist in the space of line congruences (\cite{Craizer-Garcia-1},\cite{Izu}). We have made no progress in the first problem, but in this paper we give a positive answer to the second one.

In order to study the umbilical points of a line congruence, we shall describe each line by its intersection $f(x,y)$ with a transversal plane and its direction vector $\xi(x,y)$. We may assume that the transversal plane is the plane $z=0$ and that the third coordinate of the direction vector is $1$. Under these hypothesis, the line congruence is described by
\begin{equation}\label{eq:Congruence}
f(x,y)=(f^1(x,y),f^2(x,y),0), \ \ \ \xi=(\xi^1(x,y),\xi^2(x,y),1),
\end{equation}
for $(x,y)$ in the domain $\mathcal{D}\subset\mathbb{R}^2$, where $f$ is an immersion and $\xi$ a directional vector of the line. 
For any line congruence defined by Equation \eqref{eq:Congruence}, one can describe the Weingarten condition by an equation of the form $W=0$. By choosing $f(x,y)=(x,y,0)$, we shall prove that
\begin{equation}\label{eq:DefineW}
W= (\xi^2_y-\xi^1_x)(\xi^2_{xx}\xi^1_{yy}-\xi^1_{xx}\xi^2_{yy})-2\xi^1_y(\xi^1_{xx}\xi^2_{xy}-\xi^2_{xx}\xi^1_{xy})+2\xi^2_x(\xi^1_{xy}\xi^2_{yy}-\xi^1_{yy}\xi^2_{xy}).
\end{equation}
Note that, at points where one of the focal sets is singular, the Weingarten condition defined above does not make sense. But one can use Equation \eqref{eq:DefineW} as a definition of the Weingarten condition at singular points that are limits of regular points. 

It is easier to study analytical $W$-conguences than smooth ones, and we shall do this in this paper. 
For an analytic $W$-congruence, the condition $W=0$ may be replaced by the conditions
equations 
\begin{equation}\label{eq:WDerivatives}
W_{n-k,k}=0, 
\end{equation}
$k=0,1,...n$, $n\in\mathbb{N}$, where $W_{n-k,k}$ denotes the derivative of order $n$, $n-k$ times in $x$ and $k$ times in $y$, at the origin. To understand the jet space at an umbilical point of a $W$-congruence, we need to understand the relations between the coefficients of the Taylor expansion of $\xi$ determined by Equations \eqref{eq:WDerivatives}. We were able to understand all of them, except possibly  one, namely $W_{m0}=0$, where $m$ is a parameter defined by the $2$-jet of $\xi$. For $m$ small, say
$m\in\{1, 2,3,4\}$, we could verify that this equation holds automatically, and our conjecture is that this is true for any $m\in\mathbb{N}$. For $m\not\in\mathbb{N}$, the jet space at a non-degenerate umbilical point is completely understood. 

The main result of the paper is that, for $m\in\{1, 2,3,4\}$, the umbilical point is of type $A_m$, for an open and dense subset of $W$-congruences in the $C^{\omega}$-topology, thus giving a positive answer to the second problem described above. 
We conjecture that it is possible to generalize this result for any $m\in\mathbb{N}$, and that when 
$m\not\in\mathbb{N}$, the non-degenerate umbilical point is of type $A_{\infty}$. In order to support these conjectures, 
we describe a class of examples that behaves like that.

The paper is organized as follows: In Section 2, we prove that Weingarten congruences are characterized by the condition $W=0$, where $W$ is given by Equation \eqref{eq:DefineW}.  In Section 3, we discuss the $C^{\omega}$-topology in the space of jets
at umbilical points. In Section 4 we proof our main results, namely, that for $m\in\{1, 2,3,4\}$, the umbilical point is of type $A_m$ for an open and dense set of $W$-congruences in the $C^{\omega}$-topology.  Finally, in Section 5 we discuss some conjectures and examples. 

\section{Characterization of $W$-congruences}

\subsection{Elements of a line congruence}
			
The main elements of a line congruence are the following: A principal line is a curve in the domain whose corresponding ruled surface is developable, and principal directions are tangent to principal lines.
The discriminant set is formed by the points of the domain where both focal sets coincide. An umbilical point is a point of the discriminant set where all directions are principal. 
There are two ridge and two subparabolic sets. The $i$-ridge set, $i=1,2$, is formed by the points of the domain where the $i$-focal set is singular, while the $i$-subparabolic set is formed by the points of the domain where the $i$-focal set is parabolic (\cite{FIRT}).

The ridge and discriminant points of the line congruence are called {\it singular}, while all others are called {\it regular}. In \cite{Craizer-Garcia-1} it is proved that, generically in the space of line congruences, every singular point is a limit of a sequence of regular points. Along this paper, we shall consider only line congruences satisfying this condition. Observe that the Weingarten condition as defined above does not make sense at singular points, but we shall consider below a condition at regular points that can be extended to singular points.
	
Consider a line congruence defined by Equations \eqref{eq:Congruence}. We write
$$
\xi_x=-\left(af_x+cf_y\right), \ \ \ \xi_y=-\left(bf_x+df_y\right).
$$
for some real functions $a,b,c,d$ defined on the domain $\mathcal{D}$.
The linear operator
\[
S=\left[
\begin{array}{cc}
a  &  b \\
c  & d
\end{array}
\right]
\]
is called the shape operator. 
When $f=(x,y,0)$, we have that
$$
a=-\xi^1_x, \ \ b=-\xi^1_y, \ \ c=-\xi^2_x, \ \ d=-\xi^2_y. 
$$
The discriminant $\delta$ is defined by 
$$
\delta(x,y)=(a-d)^2+4bc,
$$
and the discriminant set is defined by $\delta=0$.  A discriminant point is called umbilical when $a=d$ and $b=c=0$.  
We denote by $\lambda_1$ and $\lambda_2$ the eigenvalues of the shape operator. Since the results of this paper are local, we may assume that  $\lambda_i\neq 0$, $i=1,2$.

\subsection{Projective metrics on focal sets}

Consider a non-discriminant point. In a neighborhood of such a point, we may use principal coordinates $(u,v)$, and we shall do this along this subsection. More precisely, we shall assume that $v=constant$ are the $1$-principal lines and $u=constant$ are the $2$-principal lines. The focal sets are then defined by
$$
E_1(u,v)=f(u,v)+\frac{1}{\lambda_1(u,v)}\xi(u,v), \ \ E_2(u,v)=f(u,v)+\frac{1}{\lambda_2(u,v)}\xi(u,v),
$$
where $\lambda_1$ and $\lambda_2$ are the eigenvalues of the shape operator.

\begin{lemma}\label{Lemma:SingularFocalSets}
Consider a non-discriminant point of the line congruence. Then
$$
(E_1)_u=-\frac{(\lambda_1)_u}{\lambda_1^2}\xi, \ \ \ (E_1)_v=\left( 1-\frac{\lambda_2}{\lambda_1} \right)f_v-\frac{(\lambda_1)_v}{\lambda_1^2}\xi,
$$
and
$$
(E_2)_u=\left( 1-\frac{\lambda_1}{\lambda_2} \right)f_u-\frac{(\lambda_2)_u}{\lambda_2^2}\xi, \ \ \ (E_2)_v=-\frac{(\lambda_2)_v}{\lambda_2^2}\xi.
$$
We conclude that $E_1$ is singular if and only if $(\lambda_1)_u=0$ and  $E_2$ is singular if and only if $(\lambda_2)_v=0$.
\end{lemma}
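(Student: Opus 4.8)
The plan is to work entirely in the principal coordinates $(u,v)$ and to exploit the fact that, by definition of these coordinates, the shape operator is diagonal. First I would record the consequence of diagonality at the level of first derivatives: since the $u$-direction is the $1$-principal direction with eigenvalue $\lambda_1$ and the $v$-direction is the $2$-principal direction with eigenvalue $\lambda_2$, the defining relations $\xi_x=-(af_x+cf_y)$, $\xi_y=-(bf_x+df_y)$ reduce to
\begin{equation*}
\xi_u=-\lambda_1 f_u, \qquad \xi_v=-\lambda_2 f_v.
\end{equation*}

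Next I would differentiate the focal maps $E_1=f+\lambda_1^{-1}\xi$ and $E_2=f+\lambda_2^{-1}\xi$ directly, applying the product rule and substituting the two relations above. In $(E_1)_u$ the term $f_u$ coming from differentiating $f$ is exactly cancelled by the contribution $\lambda_1^{-1}\xi_u=-f_u$, leaving only the multiple of $\xi$ produced by differentiating $\lambda_1^{-1}$; the analogous cancellation of $f_v$ occurs in $(E_2)_v$. For the mixed derivatives $(E_1)_v$ and $(E_2)_u$ no such cancellation happens, and one obtains the stated combination of a multiple of the tangent vector ($f_v$, respectively $f_u$) with coefficient $1-\lambda_2/\lambda_1$ (respectively $1-\lambda_1/\lambda_2$) and a multiple of $\xi$. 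These are routine computations that reproduce the four displayed formulas.

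Finally, for the singularity criterion I would argue that $E_1$ is singular precisely when $(E_1)_u$ and $(E_1)_v$ are linearly dependent. Because $f$ lies in the plane $z=0$ while the third coordinate of $\xi$ equals $1$, the vectors $f_v$ and $\xi$ are linearly independent; moreover at a non-discriminant point $\lambda_1\neq\lambda_2$, so the coefficient $1-\lambda_2/\lambda_1$ in $(E_1)_v$ is nonzero and $(E_1)_v$ genuinely involves $f_v$. Since $(E_1)_u$ is a pure multiple of $\xi$, the two vectors can be dependent only if $(E_1)_u=0$, that is $(\lambda_1)_u=0$; the same reasoning with the roles of $u$ and $v$ interchanged yields that $E_2$ is singular if and only if $(\lambda_2)_v=0$.

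I expect the computation itself to present no real obstacle; the only point requiring care is the justification of the singularity criterion, where one must invoke both the transversality condition (the third coordinate of $\xi$ being $1$) to guarantee independence of $f_v$ and $\xi$, and the non-discriminant hypothesis $\lambda_1\neq\lambda_2$ to ensure that the tangential part of $(E_1)_v$ does not degenerate.
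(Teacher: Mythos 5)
Your proposal is correct and follows exactly the paper's route: the paper's proof is the one-line remark that everything is ``immediate from the equations $\xi_u=-\lambda_1 f_u$ and $\xi_v=-\lambda_2 f_v$,'' and your computation is precisely the expansion of that remark. The extra care you take with the singularity criterion (linear independence of $f_v$ and $\xi$ via the transversality of $\xi$, plus $\lambda_1\neq\lambda_2$ at non-discriminant points) is left implicit in the paper but is the intended justification.
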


\begin{proof}
Immediate from the equations $\xi_u=-\lambda_1 f_u$ and $\xi_v=-\lambda_2 f_v$.
\end{proof}

\begin{lemma}\label{Lemma:FocalMetric}
Consider a non-discriminant point of the line congruence. Then
$$
\left[ (E_1)_u, (E_1)_v, (E_1)_{uu} \right]=\frac{(\lambda_1)_u^2(\lambda_1-\lambda_2)}{(\lambda_1)^4}\left[f_u,f_v\right],
$$
$$
\left[ (E_1)_u, (E_1)_v, (E_1)_{vv} \right]=-\frac{(\lambda_1)_u(\lambda_1-\lambda_2)^2}{(\lambda_1)^4}\left[f_v,f_{vv}\right],
$$
and $\left[ (E_1)_u, (E_1)_v, (E_1)_{uv} \right]=0$. Moreover, 
$$
\left[ (E_2)_u, (E_2)_v, (E_2)_{uu} \right]=\frac{(\lambda_2)_v(\lambda_2-\lambda_1)^2}{(\lambda_2)^4}\left[f_u,f_{uu}\right],
$$
$$
\left[ (E_2)_u, (E_2)_v, (E_2)_{vv} \right]=\frac{(\lambda_2)_v^2(\lambda_2-\lambda_1)}{(\lambda_2)^4}\left[f_u,f_{v}\right],
$$
and $\left[ (E_2)_u, (E_2)_v, (E_2)_{uv} \right]=0$.
\end{lemma}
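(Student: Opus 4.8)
The plan is to reduce every triple product to a single scalar multiple of a $2\times 2$ determinant by exploiting that the first two slots are fixed by Lemma \ref{Lemma:SingularFocalSets}. Recall that $(E_1)_u=-\frac{(\lambda_1)_u}{\lambda_1^2}\xi$ is parallel to $\xi$, while $(E_1)_v=\left(1-\frac{\lambda_2}{\lambda_1}\right)f_v-\frac{(\lambda_1)_v}{\lambda_1^2}\xi$ lies in the plane spanned by $f_v$ and $\xi$. Hence $[(E_1)_u,(E_1)_v,Z]$ annihilates any component of $Z$ along $\xi$ or $f_v$, so only the $f_u$-component of $Z$ survives. Writing $Z=\alpha f_u+\beta f_v+\gamma\xi$ in the frame $\{f_u,f_v,\xi\}$, I obtain
\[
[(E_1)_u,(E_1)_v,Z]=-\frac{(\lambda_1)_u}{\lambda_1^2}\left(1-\frac{\lambda_2}{\lambda_1}\right)\alpha\,[\xi,f_v,f_u].
\]
Since $\xi=(\xi^1,\xi^2,1)$ and $f_u,f_v$ have vanishing third coordinate, expanding along the third column gives $[\xi,f_v,f_u]=-[f_u,f_v]$, whence $[(E_1)_u,(E_1)_v,Z]=\frac{(\lambda_1)_u(\lambda_1-\lambda_2)}{\lambda_1^3}\,\alpha\,[f_u,f_v]$.

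It then remains to extract the coefficient $\alpha$ for $Z\in\{(E_1)_{uu},(E_1)_{vv},(E_1)_{uv}\}$ by differentiating the expressions of Lemma \ref{Lemma:SingularFocalSets} and using $\xi_u=-\lambda_1 f_u$ and $\xi_v=-\lambda_2 f_v$. Differentiating $(E_1)_u$ in $u$ produces the term $-\frac{(\lambda_1)_u}{\lambda_1^2}\xi_u=\frac{(\lambda_1)_u}{\lambda_1}f_u$, so $\alpha=\frac{(\lambda_1)_u}{\lambda_1}$, which gives the first formula. Differentiating $(E_1)_u$ in $v$ only produces an $f_v$ term through $\xi_v$, so $\alpha=0$ and the mixed product vanishes.

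The one case requiring care is $(E_1)_{vv}$, where the $f_u$-component arises not through $\xi$ but only through $f_{vv}$. Here I would use that $f$ is planar: since $f=(f^1,f^2,0)$, the vector $f_{vv}$ has zero third coordinate and therefore lies in $\operatorname{span}(f_u,f_v)$, with no $\xi$-component. Its $f_u$-coefficient is then exactly $-[f_v,f_{vv}]/[f_u,f_v]$, and combined with the factor $\left(1-\frac{\lambda_2}{\lambda_1}\right)$ coming from differentiating the $f_v$-term of $(E_1)_v$ this yields $\alpha\,[f_u,f_v]=-\frac{\lambda_1-\lambda_2}{\lambda_1}[f_v,f_{vv}]$, hence the second formula.

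The statements for $E_2$ follow from the identical computation after interchanging the roles of $u$ and $v$ and of $\lambda_1$ and $\lambda_2$; the only reshuffling is that now $(E_2)_v$ is the vector parallel to $\xi$ and the transversal direction is $f_v$, so the surviving quantity is the $f_v$-component of $f_{uu}$, captured by $[f_u,f_{uu}]$. The main bookkeeping obstacle is keeping track of signs in $[\xi,f_v,f_u]$ versus $[f_u,\xi,f_v]$ and correctly converting the planar-frame coefficients into the determinants $[f_v,f_{vv}]$ and $[f_u,f_{uu}]$; everything else is a direct differentiation using $\xi_u=-\lambda_1 f_u$ and $\xi_v=-\lambda_2 f_v$.
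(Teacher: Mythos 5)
Your proof is correct and follows the same route the paper intends: the paper's proof is simply ``Immediate from Lemma \ref{Lemma:SingularFocalSets},'' and your argument carries out exactly that computation, using multilinearity to reduce each triple product to the single surviving frame component and correctly handling the sign in $[\xi,f_v,f_u]=-[f_u,f_v]$ and the planarity of $f_{uu}$, $f_{vv}$. All six formulas check out.
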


\begin{proof}
Immediate from Lemma \ref{Lemma:SingularFocalSets}.
\end{proof}

\begin{Proposition}\label{prop:Metric}
Consider a regular point of the line congruence. Then:
\begin{enumerate}
\item
The projective metric metric in the focal set $E_1$ is conformal to 
$$
ds_1^2=(\lambda_1)_u [f_u, f_v]du^2+(\lambda_2-\lambda_1)[f_v, f_{vv}] dv^2.
$$

\item
The projective metric in the focal set $E_2$ is conformal to 
$$
ds_2^2=(\lambda_1-\lambda_2)[f_u,f_{uu}] du^2 -(\lambda_2)_v [f_u,f_v]dv^2.
$$
\end{enumerate}
\end{Proposition}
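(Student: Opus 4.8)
The plan is to invoke the standard description from projective differential geometry of the conformal class of the projective metric of a parametrized surface $X(u,v)$ in affine $3$-space: it is represented by the quadratic form whose coefficients are the triple products $[X_u,X_v,X_{uu}]$, $[X_u,X_v,X_{uv}]$ and $[X_u,X_v,X_{vv}]$ (equivalently, the second fundamental form, whose conformal class is a projective invariant). Applying this to the focal surface $E_1$, its projective metric is conformal to
$$[(E_1)_u,(E_1)_v,(E_1)_{uu}]\,du^2+2[(E_1)_u,(E_1)_v,(E_1)_{uv}]\,du\,dv+[(E_1)_u,(E_1)_v,(E_1)_{vv}]\,dv^2,$$
and similarly for $E_2$. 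Since these are precisely the quantities computed in Lemma~\ref{Lemma:FocalMetric}, the proof reduces to a substitution.

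First I would treat $E_1$. By Lemma~\ref{Lemma:FocalMetric} the mixed term vanishes, and the quadratic form becomes
$$\frac{(\lambda_1)_u^2(\lambda_1-\lambda_2)}{\lambda_1^4}[f_u,f_v]\,du^2-\frac{(\lambda_1)_u(\lambda_1-\lambda_2)^2}{\lambda_1^4}[f_v,f_{vv}]\,dv^2.$$
At a regular point the scalar $\dfrac{(\lambda_1)_u(\lambda_1-\lambda_2)}{\lambda_1^4}$ is nonzero: the focal set $E_1$ is nonsingular, so $(\lambda_1)_u\neq 0$ by Lemma~\ref{Lemma:SingularFocalSets}, and we are off the discriminant, so $\lambda_1\neq\lambda_2$. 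Factoring this scalar out and discarding it as a conformal factor leaves exactly $ds_1^2$.

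For $E_2$ I would argue identically, inserting the triple products from Lemma~\ref{Lemma:FocalMetric} and pulling out the nonvanishing factor $\dfrac{(\lambda_2)_v(\lambda_2-\lambda_1)}{\lambda_2^4}$, where now $(\lambda_2)_v\neq 0$ by Lemma~\ref{Lemma:SingularFocalSets}. This yields $(\lambda_2-\lambda_1)[f_u,f_{uu}]\,du^2+(\lambda_2)_v[f_u,f_v]\,dv^2$, which equals $-ds_2^2$; since a conformal class is defined only up to a nonzero scalar, the overall sign is irrelevant and the stated form is obtained. I expect the only step requiring genuine justification to be the first one — identifying the triple-product quadratic form with the conformal class of the projective metric — whereas the remainder is immediate from Lemma~\ref{Lemma:FocalMetric}; it is also worth recording that the vanishing of the mixed terms shows the principal coordinates $(u,v)$ diagonalize both projective metrics.
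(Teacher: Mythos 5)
Your proposal is correct and follows exactly the route the paper intends: the paper's proof is simply ``Immediate from Lemma~\ref{Lemma:FocalMetric}'', i.e.\ the identification of the conformal class of the projective metric with the triple-product quadratic form, followed by the substitution and factoring out of the nonvanishing conformal factor that you carry out explicitly (including the harmless overall sign in the $E_2$ case). You have merely written out the details the authors leave implicit.
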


\begin{proof}
Immediate from Lemma \ref{Lemma:FocalMetric}. 
\end{proof}

\subsection{Ridge and subparabolic sets}

From now on we shall consider coordinates $(x,y)\in\mathcal{D}$ such that the line congruence is defined by Equation \eqref{eq:Congruence} with $f(x,y)=(x,y,0)$.  When there is no risk of confusion, we write simply $f(x,y)=(x,y)$. Denote by $(\lambda_i,w_i)$, $i=1,2$, the eigenvalues and eigenvectors of the shape operator. Then
$$
\lambda_{1}=\frac{1}{2}\left( a+d+\sqrt{\delta} \right) \ \ {\rm and} \ \ \lambda_{2}=\frac{1}{2}\left( a+d-\sqrt{\delta}. \right),
$$
Moreover, if $c\neq 0$, we have the eigenvectors
$$
w_1=\left(  a-d+\sqrt{\delta}, 2c\right), \ \ w_2=\left(  a-d-\sqrt{\delta}, 2c\right).
$$
Since
$$
f_{w_1}=\left(  a-d+\sqrt{\delta}, 2c\right), \ \ f_{w_1}=\left(  a-d-\sqrt{\delta}, 2c\right),
$$
we obtain
$$
\left[ f_{w_1}, f_{w_2} \right]= 4c\sqrt{\delta}.
$$
	
A point $(x,y)$ belongs to the $i$-ridge set if the $i$-focal set is singular at the corresponding point. By Lemma \ref{Lemma:SingularFocalSets}, it is characterized by the condition $(\lambda_i)_{w_i}=0$. The following lemma appear in \cite{Craizer-Garcia-1}:
	
\begin{lemma}\label{lemma:Ridge}
Assume that $f(x,y)=(x,y)$ and $c\neq 0$. Outside the discriminant set, 
$$
(\lambda_1)_{w_1}=\frac{1}{\sqrt{\delta}}\left( A\sqrt{\delta}+B \right),\ \ \ (\lambda_2)_{w_2}=\frac{1}{\sqrt{\delta}}\left( A\sqrt{\delta}-B \right),
$$
where
$$
A=(a-d)a_{x}+cd_{y}+2ca_{y}+bc_{x},
$$
$$
B=(a-d)^2a_{x}+(a-d)(2ca_{y}-cd_{y}+bc_{x})+2c(2bd_{x}+ba_{x}+cb_{y}).
$$
\end{lemma}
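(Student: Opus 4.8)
The plan is to compute the directional derivatives directly from the explicit formulas for the eigenvalues and eigenvectors stated just above. Writing $s=\sqrt{\delta}$ for brevity, we have $\lambda_1=\frac{1}{2}(a+d+s)$ and $w_1=(a-d+s,\,2c)$, so that
$$
(\lambda_1)_{w_1}=(a-d+s)(\lambda_1)_x+2c\,(\lambda_1)_y.
$$
The partials of $\lambda_1$ follow from $(\lambda_1)_x=\frac{1}{2}\big(a_x+d_x+\frac{\delta_x}{2s}\big)$ and its $y$-analogue, where $\delta_x=2(a-d)(a_x-d_x)+4(b_xc+bc_x)$ and similarly for $\delta_y$. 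First I would substitute these into the displayed directional derivative and expand.

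The crucial organizational point is that, after expansion, $s$ enters in three ways: terms free of $s$; terms linear in $s$, coming from the product of the $s$ in $w_1$ with the $s$-free part of the partials; and terms in $1/s$, coming from $\delta_x/(2s)$ and $\delta_y/(2s)$. At first glance the linear-in-$s$ terms seem to spoil the claimed form $A+B/s$. The key step is to fold them back using $s=\delta/s$: every term of the form $(\text{coefficient})\cdot s$ equals $(\text{coefficient})\cdot\delta/s$ and is thereby absorbed into the $1/s$ part. After this reduction the $s$-free part is the candidate for $A$ and the full $1/s$ coefficient is the candidate for $B$.

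These candidates do not literally match the stated $A$ and $B$ until one uses the integrability relations. Since $a=-\xi^1_x$, $b=-\xi^1_y$, $c=-\xi^2_x$, $d=-\xi^2_y$ all arise from the single map $\xi=(\xi^1,\xi^2)$, equality of mixed partials gives
$$
a_y=b_x,\qquad c_y=d_x.
$$
Substituting $b_x=a_y$ in the $s$-free part turns $b_xc+ca_y$ into $2ca_y$ and produces exactly $A=(a-d)a_x+cd_y+2ca_y+bc_x$; substituting both relations in the $1/s$ coefficient, and recalling that the folded $s$-linear term contributes $\frac{1}{2}\delta\,(a_x+d_x)$, collapses it to the stated $B$. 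This bookkeeping — tracking the folded $s$-linear term and applying the two integrability substitutions consistently — is the main, and essentially the only, source of difficulty; everything else is a routine though lengthy expansion.

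Finally, the formula for $(\lambda_2)_{w_2}$ requires no separate computation. Passing to the other branch of the square root, i.e. the formal substitution $s\mapsto -s$, sends $\lambda_1\mapsto\lambda_2$ and $w_1\mapsto w_2$, while $A$ and $B$ are manifestly independent of $s$. Hence $(\lambda_1)_{w_1}=A+B/s$ immediately yields $(\lambda_2)_{w_2}=A-B/s=\frac{1}{\sqrt{\delta}}\big(A\sqrt{\delta}-B\big)$, which is the second identity in the statement.
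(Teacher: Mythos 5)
Your proposal is correct. The paper itself gives no proof of this lemma --- it simply cites \cite{Craizer-Garcia-1} --- so there is nothing to compare against except the obvious direct computation, which is exactly what you carry out. I checked the bookkeeping: the $s$-free part of $(a-d+s)(\lambda_1)_x+2c(\lambda_1)_y$ is $\tfrac12(a-d)(a_x+d_x)+\tfrac14\delta_x+c(a_y+d_y)$, which after substituting $b_x=a_y$ collapses to the stated $A$; and the $1/s$ coefficient, namely $\tfrac14(a-d)\delta_x+\tfrac12 c\,\delta_y+\tfrac12\delta\,(a_x+d_x)$ (the last summand being your folded $s$-linear term), reduces to the stated $B$ once both relations $b_x=a_y$ and $c_y=d_x$ are used. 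The symmetry argument $s\mapsto -s$ for the second identity is also sound, since it exchanges $(\lambda_1,w_1)$ with $(\lambda_2,w_2)$ while $A$ and $B$ are independent of $s$.
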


As a consequence of the Lemma \ref{lemma:Ridge}, the $i$-ridge set is defined outside the discriminant set by the Equation $R_i=0$, where
\begin{equation*}\label{eq:DefineRi}
R_i=A\sqrt{\delta}+(-1)^{i-1}B, \ \ \ i=1,2.
\end{equation*}
The union of both ridges is given by $R=0$, where
\begin{equation}\label{eq:DefineR}
R=R_1\cdot R_2=A^2\delta-B^2. 
\end{equation}

\begin{remark}
In \cite[Sec.3]{Agafonov1}, the authors presented the following conditions for a system to have all points belonging to both ridge sets:
$$
(d-a)a_x-2ca_y-cd_y-bc_x=0, \ \ (a-d)d_y-2bd_x-ba_x-cb_y=0.
$$
One can easily check that these conditions is equivalent to $A=B=0$ in the above lemma.
\end{remark}

A point $(x,y)$ belongs to the $i$-subparabolic set if the $i$-focal set is parabolic at the corresponding point. It is characterized by the condition $[f_{w_j},f_{w_jw_j}]=0$, $j\neq i$. The following lemma can be found in \cite[Sec.2.3]{Craizer-Garcia-2}:

\begin{lemma}\label{lemma:SubP}
Assume that $c\neq 0$. Outside the discriminant set, 
$$
 \left[ f_{w_1}, f_{w_1w_1} \right]=\frac{4}{\sqrt{\delta}} \left( \bar{A}\sqrt{\delta}+\bar{B}\right), \ \ \ \left[ f_{w_2}, f_{w_2w_2} \right]=\frac{4}{\sqrt{\delta}} \left( \bar{A}\sqrt{\delta}-\bar{B}\right), 
$$
where
$$
\bar{A}=c_x((a-d)^2+bc) +c(d-a)(a_x-2d_x)+c^2(d_y-2a_y),
$$
$$
\bar{B}= -c((a - d)^2 + 2bc)(a_x - 2d_x) + ((a - d)^2 + 3bc)(a - d)c_x - c^2(a - d)(2a_y - d_y) - 2c^3b_y.
$$
\end{lemma}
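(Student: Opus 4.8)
\section*{Proof proposal}

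The plan is to reduce the statement to a single second-order computation in the coordinates $(x,y)$ with $f(x,y)=(x,y)$, and then to control the algebra with two structural inputs: the integrability of $\xi$ and the symmetry $w_1\leftrightarrow w_2$. Writing $f_{w_1}=(p,q)$ with $p=a-d+\sqrt{\delta}$ and $q=2c$ (as recorded just before the statement), and noting that $f$ is the identity embedding of the plane, the second directional derivative is obtained by differentiating the entries of the principal vector field,
$$
f_{w_1w_1}=p\,(f_{w_1})_x+q\,(f_{w_1})_y=(p\,p_x+q\,p_y,\ p\,q_x+q\,q_y),
$$
so that the planar bracket becomes
$$
[f_{w_1},f_{w_1w_1}]=p^2q_x+pq\,q_y-pq\,p_x-q^2p_y,
$$
and the expression for $w_2$ is the same with $p$ replaced by $a-d-\sqrt{\delta}$. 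Thus everything reduces to substituting $q=2c$, $p=a-d+\sqrt{\delta}$ and $\delta=(a-d)^2+4bc$ and simplifying.

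The essential structural input is the integrability condition. Since $f=(x,y)$ has vanishing second derivatives, the identity $\xi_{xy}=\xi_{yx}$ applied to $\xi_x=-(a f_x+c f_y)$ and $\xi_y=-(b f_x+d f_y)$ yields
$$
a_y=b_x,\qquad c_y=d_x.
$$
Using these relations to eliminate $b_x$ and $c_y$ leaves exactly the derivatives $a_x,d_x,a_y,d_y,c_x,b_y$ that appear in $\bar A$ and $\bar B$, which is the reason those two derivatives are absent from the statement; this substitution is what makes the final forms match. The second input is the symmetry: $w_2$ is obtained from $w_1$ by $\sqrt{\delta}\mapsto-\sqrt{\delta}$, so, treating $s=\sqrt{\delta}$ as a formal variable with $s^2=\delta$, the bracket $[f_{w_2},f_{w_2w_2}]$ is literally $[f_{w_1},f_{w_1w_1}]$ with $s$ replaced by $-s$. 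Decomposing into even and odd parts in $s$,
$$
4\bar A=\tfrac12\big([f_{w_1},f_{w_1w_1}]+[f_{w_2},f_{w_2w_2}]\big),\qquad \tfrac{4\bar B}{\sqrt{\delta}}=\tfrac12\big([f_{w_1},f_{w_1w_1}]-[f_{w_2},f_{w_2w_2}]\big),
$$
isolates the two polynomials with the correct signs and reduces the problem to computing these two parts.

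The main obstacle is the algebraic simplification needed to match the explicit polynomials in the statement. The $s^{-1}$ terms arise from $p_x=(a-d)_x+\delta_x/(2s)$ and $p_y=(a-d)_y+\delta_y/(2s)$, so after inserting $\delta_x=2(a-d)(a-d)_x+4(b_xc+bc_x)$ and $\delta_y=2(a-d)(a-d)_y+4(b_yc+bc_y)$, applying $a_y=b_x$ and $c_y=d_x$, and expanding $p^2=(a-d)^2+\delta+2(a-d)s$, one must collect terms by their parity in $s$ and use $s^2=\delta=(a-d)^2+4bc$ to remove every remaining occurrence of $\delta$. Grouping the coefficients of $a_x,d_x,a_y,d_y,c_x,b_y$ and comparing with the stated $\bar A$ and $\bar B$ completes the proof. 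This last step is routine but lengthy, and it is where all the care is required; the vanishing of the coefficients of $b_x$ and $c_y$ after the integrability substitution provides a useful internal consistency check.
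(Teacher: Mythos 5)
Your proposal is correct, but note that the paper itself gives no proof of this lemma: it is simply quoted from \cite[Sec.~2.3]{Craizer-Garcia-2}, so your argument is a self-contained verification where the text offers only a citation. The computation you outline does go through: with $f$ the identity, $f_{w_1}=w_1=(p,q)$ where $p=a-d+\sqrt{\delta}$, $q=2c$, and $f_{w_1w_1}=(pp_x+qp_y,\ pq_x+qq_y)$, so $[f_{w_1},f_{w_1w_1}]=p^2q_x+pq(q_y-p_x)-q^2p_y$. I carried out the expansion: writing $s=\sqrt{\delta}$ and using $s^2=(a-d)^2+4bc$ together with the integrability relations $a_y=b_x$, $c_y=d_x$, the part even in $s$ collapses to $4c_x((a-d)^2+bc)+4c(a-d)(2d_x-a_x)+4c^2(d_y-2a_y)=4\bar A$, and the odd part, multiplied by $s$, collapses to exactly $4\bar B$; the $w_2$ identity then follows from the sign flip $s\mapsto -s$ as you say. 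Two small remarks. First, your closing sentence about "the vanishing of the coefficients of $b_x$ and $c_y$" is misleading: those coefficients do not vanish in the raw expansion (e.g.\ a term $-4b_xc^2$ appears in the even part); they are converted into $a_y$- and $d_x$-terms by the forced substitution, so there is nothing there to check. Second, your reading of $f_{w_1w_1}$ as the derivative of the vector field $f_{w_1}$ along $w_1$ (rather than the Hessian of $f$, which vanishes for the identity) is the only interpretation under which the lemma is non-trivial, and the exact match with the stated $\bar A$ and $\bar B$ confirms it is the intended one.
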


As a consequence of the above lemma, the $i$-subparabolic set is defined outside the discriminant set by the Equation $S_i=0$, where
\begin{equation*}\label{eq:DefineSi}
S_i=\bar{A}\sqrt{\delta}+(-1)^{i}\bar{B}, \ \ \ i=1,2.
\end{equation*}
The union of both subparabolic sets is given by $S=0$, where
\begin{equation}\label{eq:DefineS}
S=S_1\cdot S_2=\bar{A}^2\delta-\bar{B}^2. 
\end{equation}

\begin{remark}
In \cite[Sec.3]{Agafonov1}, the authors presented the following conditions for a system to have all points belonging to both subparabolic sets:
$$
(a-d)bc_x+bc(2d_x-a_x)-c^2b_y=0, \ \ (d-a)cb_y+bc(2a_y-d_y)-b^2c_x=0.
$$
One can easily check that these conditions is equivalent to $\bar{A}=\bar{B}=0$ in the above lemma.
\end{remark}

\subsection{A formula defining $W$-congruences}

Consider a line congruence defined by Equation \eqref{eq:Congruence} with $f=(x,y)$. 
The following proposition is a direct consequence of Proposition \ref{prop:Metric}:

\begin{Proposition}
At a regular point, the line congruence is Weingarten if and only if 
\begin{equation}\label{eq:CharW}
\left|
\begin{array}{cc}
(\lambda_1)_{w_1}[f_{w_1},f_{w_2}]  & (\lambda_1-\lambda_2)[f_{w_2},f_{w_2w_2}]  \\
(\lambda_1-\lambda_2) [f_{w_1},f_{w_1w_1}]  & (\lambda_2)_{w_2}[f_{w_1},f_{w_2}] 
\end{array}
\right|=0.
\end{equation}
\end{Proposition}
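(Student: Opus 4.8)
The plan is to read off the $W$-condition directly from Proposition \ref{prop:Metric} and then rewrite the resulting coordinate identity in terms of the principal directions $w_1,w_2$. By definition the congruence is Weingarten when the induced correspondence between the focal surfaces $E_1$ and $E_2$ preserves the conformal class of the projective metrics. Working in the principal coordinates $(u,v)$, this correspondence sends $E_1(u,v)$ to $E_2(u,v)$, so it is conformal exactly when the two metrics $ds_1^2$ and $ds_2^2$ of Proposition \ref{prop:Metric} are conformal to one another. Both metrics are diagonal in the same coordinates, so writing $ds_i^2=E_i\,du^2+G_i\,dv^2$ the conformality is equivalent to the single proportionality relation $E_1G_2-E_2G_1=0$, which after substituting the coefficients from Proposition \ref{prop:Metric} reads
$$
(\lambda_1)_u(\lambda_2)_v[f_u,f_v]^2=(\lambda_1-\lambda_2)^2[f_u,f_{uu}][f_v,f_{vv}].
$$

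Next I would pass from coordinate derivatives to directional derivatives. Since $v=\mathrm{const}$ and $u=\mathrm{const}$ are the $1$- and $2$-principal lines, there are nonvanishing functions $p,q$ with $\partial_u=p\,\partial_{w_1}$ and $\partial_v=q\,\partial_{w_2}$. The first-order quantities scale simply, namely $(\lambda_1)_u=p\,(\lambda_1)_{w_1}$, $(\lambda_2)_v=q\,(\lambda_2)_{w_2}$ and $[f_u,f_v]=pq\,[f_{w_1},f_{w_2}]$. For the second-order brackets I would differentiate $f_u=p\,f_{w_1}$ once more, obtaining $f_{uu}=p_u\,f_{w_1}+p^2\,f_{w_1w_1}$; inside the bracket $[f_u,f_{uu}]$ the term proportional to $f_{w_1}$ is annihilated, leaving $[f_u,f_{uu}]=p^3\,[f_{w_1},f_{w_1w_1}]$, and symmetrically $[f_v,f_{vv}]=q^3\,[f_{w_2},f_{w_2w_2}]$.

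Finally, substituting all of these relations into the conformality identity, both sides carry the common nonvanishing factor $p^3q^3$. Cancelling it gives
$$
(\lambda_1)_{w_1}(\lambda_2)_{w_2}[f_{w_1},f_{w_2}]^2=(\lambda_1-\lambda_2)^2[f_{w_1},f_{w_1w_1}][f_{w_2},f_{w_2w_2}],
$$
which is precisely the expansion of the determinant in \eqref{eq:CharW}. The one step requiring care is the transformation of the second-order brackets: one has to apply the chain rule $\partial_u=p\,\partial_{w_1}$ correctly to the already-differentiated $f_{w_1}$ and check that the $p_u$-correction disappears under the determinant, so that the change of parametrization contributes only the clean factor $p^3$ (and $q^3$). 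Everything else is a routine substitution, and the nonvanishing of $p,q$ at a regular point guarantees that the cancellation is legitimate.
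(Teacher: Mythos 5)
Your proposal is correct and follows the same route as the paper, which simply declares the statement a direct consequence of Proposition \ref{prop:Metric}: conformality of the two diagonal metrics $ds_1^2$, $ds_2^2$ is the vanishing of $E_1G_2-E_2G_1$, and the passage from the coordinate derivatives $\partial_u,\partial_v$ to the principal directions $w_1,w_2$ only introduces the common nonvanishing factor $p^3q^3$ that you cancel. The only detail you add beyond the paper is the explicit check that the $p_u$-term in $f_{uu}$ dies inside the bracket $[f_u,f_{uu}]$, which is a useful verification but not a different argument.
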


Consider
\begin{equation}\label{eq:DefineH}
H=c^2R-S,
\end{equation}
where $R$ and $S$ are given by Equations \eqref{eq:DefineR} and \eqref{eq:DefineS}, respectively. 

\begin{corollary}\label{prop:CharH}
Assume $c\neq 0$ at a regular point of the line congruence. Then the line congruence is Weingarten if and only if $H=0$.
\end{corollary}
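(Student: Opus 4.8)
The plan is to expand the determinant condition \eqref{eq:CharW} from the preceding Proposition and substitute the explicit formulas for each of its four entries, so that the whole expression collapses into a numerical multiple of $c^2R - S = H$. First I would expand the $2\times 2$ determinant, obtaining the single equation
$$(\lambda_1)_{w_1}(\lambda_2)_{w_2}\,[f_{w_1},f_{w_2}]^2 - (\lambda_1-\lambda_2)^2\,[f_{w_1},f_{w_1w_1}]\,[f_{w_2},f_{w_2w_2}] = 0.$$

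Next I would treat each product of factors separately. Using Lemma \ref{lemma:Ridge}, the product of the two ridge derivatives is
$$(\lambda_1)_{w_1}(\lambda_2)_{w_2} = \frac{1}{\delta}\left(A\sqrt{\delta}+B\right)\left(A\sqrt{\delta}-B\right) = \frac{1}{\delta}\left(A^2\delta - B^2\right) = \frac{R}{\delta},$$
recognizing $R = A^2\delta - B^2$ from \eqref{eq:DefineR}. Analogously, Lemma \ref{lemma:SubP} gives
$$[f_{w_1},f_{w_1w_1}]\,[f_{w_2},f_{w_2w_2}] = \frac{16}{\delta}\left(\bar{A}\sqrt{\delta}+\bar{B}\right)\left(\bar{A}\sqrt{\delta}-\bar{B}\right) = \frac{16}{\delta}\left(\bar{A}^2\delta - \bar{B}^2\right) = \frac{16\,S}{\delta},$$
recognizing $S = \bar{A}^2\delta - \bar{B}^2$ from \eqref{eq:DefineS}. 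I would then invoke the two elementary identities already established in the text, namely $[f_{w_1},f_{w_2}] = 4c\sqrt{\delta}$, so that $[f_{w_1},f_{w_2}]^2 = 16c^2\delta$, together with $\lambda_1 - \lambda_2 = \sqrt{\delta}$, so that $(\lambda_1-\lambda_2)^2 = \delta$.

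Substituting these four evaluations into the expanded determinant yields
$$\frac{R}{\delta}\cdot 16c^2\delta - \delta\cdot\frac{16\,S}{\delta} = 16\left(c^2R - S\right) = 16H.$$
Since we are at a regular point outside the discriminant set, $\delta \neq 0$, so every division above is legitimate; hence the Weingarten condition \eqref{eq:CharW} is equivalent to $16H = 0$, that is, to $H = 0$, as claimed. The computation presents no genuine obstacle: the only point requiring care is bookkeeping of the powers of $\sqrt{\delta}$ and of the numerical constants, so that the products of the ridge and subparabolic expressions are correctly matched to $R = R_1\cdot R_2$ and $S = S_1\cdot S_2$. The conceptual content is simply that the conjugate structure $(X\sqrt{\delta}+Y)(X\sqrt{\delta}-Y)$ appearing in both Lemma \ref{lemma:Ridge} and Lemma \ref{lemma:SubP} reproduces exactly the products defined in \eqref{eq:DefineR} and \eqref{eq:DefineS}.
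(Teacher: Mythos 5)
Your proof is correct and follows essentially the same route as the paper: expand the determinant in \eqref{eq:CharW}, substitute the conjugate-pair expressions from Lemmas \ref{lemma:Ridge} and \ref{lemma:SubP} together with $[f_{w_1},f_{w_2}]=4c\sqrt{\delta}$ and $\lambda_1-\lambda_2=\sqrt{\delta}$, and recognize the result as a nonzero multiple of $c^2R-S$. Your version merely makes explicit the overall factor of $16$, which the paper absorbs silently.
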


\begin{proof}
From the above proposition, the condition for $W$-congruence is given by
$$
(\lambda_1)_{w_1}(\lambda_2)_{w_2}[f_{w_1},f_{w_2}]^2-(\lambda_1-\lambda_2)^2[f_{w_1},f_{w_1w_1}][f_{w_2},f_{w_2w_2}]=0.
$$
Assuming that $c\neq 0$, Lemmas \ref{lemma:Ridge} and \ref{lemma:SubP} imply that this equation
can be written as 
$$
c^2(A^2\delta-B^2)-(\bar{A}^2\delta-\bar{B}^2)=0,
$$
thus proving the proposition.
\end{proof}

\begin{Proposition}\label{prop:HW}
Assume that $c\neq 0$ at a regular point of the line congruence and let $H$ be given by Equation \eqref{eq:DefineH}. Then
$$
H=4c^3\delta W,
$$
where $W$ is given by Formula \eqref{eq:DefineW}.
\end{Proposition}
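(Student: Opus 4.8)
The assertion is a polynomial identity in the entries $a,b,c,d$ of the shape operator and their first derivatives; via $a=-\xi^1_x$, $b=-\xi^1_y$, $c=-\xi^2_x$, $d=-\xi^2_y$ these are exactly the second derivatives of $\xi$, so establishing it in the $(a,b,c,d)$--variables is equivalent to establishing it for $W$ as written in \eqref{eq:DefineW}. The plan is therefore to substitute the explicit expressions for $A,B$ from Lemma \ref{lemma:Ridge}, for $\bar A,\bar B$ from Lemma \ref{lemma:SubP}, and for $\delta=(a-d)^2+4bc$, into
\[
H=c^2R-S=c^2(A^2\delta-B^2)-(\bar A^2\delta-\bar B^2),
\]
and to check that the result equals $4c^3\delta W$. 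A weight count is reassuring: both sides are homogeneous of total degree $8$ in $\{a,b,c,d\}$ together with their derivatives, and of degree $2$ in the derivatives alone. This both constrains the answer and organizes the bookkeeping.

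The computation becomes manageable after one rearrangement. Grouping the squares, I would first write
\[
H=\delta\,(cA-\bar A)(cA+\bar A)-(cB-\bar B)(cB+\bar B),
\]
so that it suffices to compute the four combinations, each linear in the derivatives. The decisive observation is that $\delta$ already divides the second product: a direct check shows that in $cB+\bar B$ the coefficients of $a_x$, $a_y$, $d_y$ and $b_y$ all cancel, while the surviving $c_x$-- and $d_x$--terms assemble as $(a-d)\delta$ and $2c\delta$, giving
\[
cB+\bar B=\delta\bigl((a-d)c_x+2c\,d_x\bigr).
\]
Consequently $\delta$ factors out of the whole of $H$, and $H=\delta\,K$ with $K=(cA-\bar A)(cA+\bar A)-(cB-\bar B)\bigl((a-d)c_x+2c\,d_x\bigr)$; the problem is thereby reduced to the derivative-degree-two identity $K=4c^3W$.

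To close, I would expand $K$ using the companion combinations $cA\pm\bar A$ (whose computation gives, for instance, $cA+\bar A=((a-d)^2+2bc)c_x+2c(a-d)d_x+2c^2d_y$, with a similar three-term expression for $cA-\bar A$) and compare with $W$ written in shape-operator form, namely $W=(a-d)(c_xb_y-a_xd_y)+2b(a_xd_x-c_xa_y)-2c(a_yd_y-b_yd_x)$, obtained from \eqref{eq:DefineW} by the substitutions above together with the symmetries $a_y=b_x$ and $c_y=d_x$. One then matches monomial by monomial, checking that each term carries the common factor $4c^3$; for example the $b_y$--monomials of $K$ arise solely from the product $(cB-\bar B)\bigl((a-d)c_x+2c\,d_x\bigr)$ and already exhibit the factor $4c^3$, reproducing the $b_y$--part of $W$. \emph{The hard part is purely the bulk of this final expansion}; the difference-of-squares rearrangement and the factoring identity for $cB+\bar B$ are exactly what tame it, and this last verification is the step I would most naturally confirm with a symbolic-algebra computation.
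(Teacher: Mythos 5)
Your proposal is correct in strategy and, where I could check them, your intermediate identities are right: with $e=a-d$ one indeed finds
\[
cB+\bar B=\bigl(e^3+4bce\bigr)c_x+\bigl(2ce^2+8bc^2\bigr)d_x=\delta\bigl(ec_x+2c\,d_x\bigr),
\qquad
cA+\bar A=\bigl(e^2+2bc\bigr)c_x+2ce\,d_x+2c^2d_y,
\]
and the translation $W=(a-d)(c_xb_y-a_xd_y)+2b(a_xd_x-c_xa_y)-2c(a_yd_y-b_yd_x)$ is what \eqref{eq:DefineW} becomes under $a=-\xi^1_x$, etc. The paper's own proof is literally the phrase ``straightforward calculations,'' so you are not deviating from its route so much as supplying the structure it omits: the difference-of-squares split $H=\delta(cA-\bar A)(cA+\bar A)-(cB-\bar B)(cB+\bar B)$ together with the factorization of $cB+\bar B$ is a genuine improvement, because it explains \emph{why} $\delta$ divides $H$ and reduces the eight-degree identity to a verification that is only quadratic in the derivatives and involves a handful of monomials ($a_xd_x$, $a_xd_y$, $a_yc_x$, $a_yd_y$, $b_yc_x$, $b_yd_x$; all others cancel). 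This is exactly the kind of check the paper delegates to a computer, and your organization makes it doable by hand.

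One caveat you should resolve when you carry out the final expansion: tracking the surviving monomials with the paper's stated formulas for $A,B,\bar A,\bar B$ (e.g.\ the coefficient of $a_yd_y$ in $H$ comes out as $+8c^4\delta$, while in $4c^3\delta W$ it is $-8c^4\delta$), I consistently obtain $K=-4c^3W$, i.e.\ $H=-4c^3\delta W$. This overall sign is immaterial for every use the paper makes of the proposition (only $H=0\Leftrightarrow W=0$ is needed), and it may trace to a sign convention in Lemma~\ref{lemma:Ridge} or Lemma~\ref{lemma:SubP}, but your write-up should not assert agreement with the $b_y$--part of $W$ without checking the sign: as stated, the $b_y$--monomials of $K$ are $-4c^3\bigl(e\,b_yc_x+2c\,b_yd_x\bigr)$.
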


\begin{proof}
Straightforward calculations.
\end{proof}

\begin{corollary}\label{cor:W=0}
At a regular point, the line congruence is Weingarten if and only if $W=0$.
\end{corollary}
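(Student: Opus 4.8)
The plan is to obtain the corollary by combining the two preceding results in the case $c\neq 0$ and then to remove that hypothesis by a rotation argument. First suppose $c\neq 0$ at the regular point under consideration. Being regular, the point is in particular non-discriminant, so $\delta\neq 0$; since also $c\neq 0$, the scalar factor $4c^3\delta$ in Proposition \ref{prop:HW} is non-zero. Hence the identity $H=4c^3\delta W$ shows that $H=0$ if and only if $W=0$. Combining this with Corollary \ref{prop:CharH}, which asserts that for $c\neq 0$ the congruence is Weingarten exactly when $H=0$, I immediately get that the congruence is Weingarten if and only if $W=0$. This disposes of every regular point at which $c\neq 0$, and it is the routine part of the argument.

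It remains to treat a regular point $p$ with $c=0$. Here I would reduce to the previous case by rotating the coordinates $(x,y)$ of the transversal plane (equivalently, rotating about the $z$-axis, which preserves both the normalization $f=(x,y)$ and the normalization of the third coordinate of $\xi$). The key observation is that a regular point is non-discriminant, so $\delta\neq 0$ and therefore $\lambda_1\neq\lambda_2$; the shape operator $S$ is thus not a scalar multiple of the identity, and its off-diagonal entry cannot vanish in every rotated frame. Consequently I may select a rotation $\rho$ for which the transformed coefficient $\tilde c$ is non-zero at $p$. Applying the equivalence already proved in the rotated frame gives: $p$ is Weingarten if and only if $\tilde W(p)=0$. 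Since the Weingarten condition is intrinsic---it is the vanishing of the determinant \eqref{eq:CharW} assembled from the conformal classes of the projective metrics of Proposition \ref{prop:Metric}, and so is unaffected by $\rho$---I conclude that $p$ is Weingarten if and only if $\tilde W(p)=0$, and it remains only to transport this back to $W$.

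The main obstacle is therefore the verification that the vanishing locus $\{W=0\}$ is itself rotation-invariant, i.e. that $W$ transforms by a non-zero multiplicative factor under rotations of the $(x,y)$-plane, so that $\tilde W(p)=0$ if and only if $W(p)=0$. I expect this to reduce to a direct, if slightly tedious, computation: using $f=(x,y)$ together with the relations $a=-\xi^1_x$, $b=-\xi^1_y$, $c=-\xi^2_x$, $d=-\xi^2_y$, one tracks how the entries of the $2$-jet of $\xi$ transform under $\rho$ and checks that the polynomial \eqref{eq:DefineW} picks up only a non-vanishing scalar. Alternatively, one can sidestep the explicit transformation law by arguing geometrically: for $c\neq 0$ one has $H=c^2R-S$ with $H=4c^3\delta W$, and the zero sets of $R$ and $S$ are the ridge and subparabolic sets, which are intrinsically defined; combined with the intrinsic meaning of the Weingarten condition, this forces $\{W=0\}$ to be coordinate-free on the regular set, again reducing the $c=0$ case to the $c\neq0$ case.
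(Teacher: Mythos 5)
Your treatment of the case $c\neq 0$ is exactly the paper's: Proposition \ref{prop:HW} together with Corollary \ref{prop:CharH} and the fact that $4c^3\delta\neq 0$ at a regular point gives the equivalence at once. For the case $c=0$ you depart from the paper, which instead argues by symmetry when $b\neq 0$, by continuity when $b$ or $c$ is non-zero along a sequence converging to the point, and finally by a direct computation in the remaining case $b=c=0$ on a whole neighborhood (there $a=a(x)$, $d=d(y)$, $W=(d-a)a_xd_y$, and Equation \eqref{eq:CharW} reduces to $a_xd_y=0$ because $[f_x,f_{xx}]=[f_y,f_{yy}]=0$). Your rotation strategy is a reasonable and arguably more uniform alternative, and your observation that the shape operator at a regular (hence non-umbilical) point is never a scalar matrix, so that some rotated frame has $\tilde c\neq 0$, is correct.

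There is, however, a genuine gap at the step you yourself flag: you never establish that $\tilde W(p)=0$ if and only if $W(p)=0$, i.e. that $W$ transforms under rotation by a non-vanishing multiplicative factor. Without this, the rotation argument only shows that the Weingarten condition at $p$ is equivalent to $\tilde W(p)=0$, which says nothing about $W(p)$ in the original frame. The fallback ``geometric'' justification does not close the gap: the identity $H=4c^3\delta W$ together with the intrinsic meaning of the ridge and subparabolic sets pins down the zero set of $W$ only where $c\neq 0$; at a point with $c=0$ both sides of $H=4c^3\delta W$ vanish for trivial reasons and carry no information about $W$, and in the worst case ($b=c=0$ on a whole neighborhood, precisely the case the paper treats by hand) there are no nearby points with $c\neq 0$ from which to pass to a limit. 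So the transformation law must actually be computed, or else the paper's direct computation in the diagonal case must be supplied; as written, the $c=0$ case is not proved.
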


\begin{proof}
If $c(0,0)\neq 0$, then the result is a direct consequence of Proposition \ref{prop:HW}. If $b(0,0)\neq 0$, by symmetry, the same result holds. It is not difficult to see that the result holds also if $b\neq 0$ or $c\neq 0$ for a sequence converging to the origin. Thus we have only to consider the case $b=c=0$ in a neighborhood of the origin.
In this case, $a=a(x)$ and $d=d(y)$ and we obtain $W=(d-a)a_xd_y$,  $\bar{A}=\bar{B}=0$. Observe also that the eigenvectors of the shape operator are $(1,0)$ and $(0,1)$ at all points, and so $[f_x,f_{xx}]=[f_y,f_{yy}]=0$. Hence Equation \eqref{eq:CharW} implies that the congruence is Weingarten if and only if $a_xd_y=0$, thus proving the corollary.
\end{proof}

At singular points, we can use the Equation $W=0$ as a definition of the Weingarten condition. 
Taking into account this definition and Corollary \ref{cor:W=0}, we conclude that the Weingarten condition is equivalent to the equation $W=0$. 

\begin{remark}
Since
$H=c^2R-S$, we conclude that, for a $W$-congruence, an $i$-ridge point is also a $j$-subparabolic point, $j=1$ or $j=2$. 
\end{remark}

\begin{remark}
To each conservation law in two variables, we can associate a line congruence. An important class of conservation laws is the {\it Temple class} (\cite{Agafonov1},\cite{Agafonov2}). In terms of line congruences, the Temple class of type $1$ corresponds to the conditions $S_1=S_2=0$ at all points, the Temple class of type $2$ corresponds to $R_1=R_2=0$ at all points, while the Temple class of type $3$ corresponds to $R_i=S_i=0$, at all points. It is clear from the above discussion that only type $3$ Temple congruences are $W$-congruences. 
\end{remark}

\subsection{Normal congruences} 

Following \cite[p.118]{Sasaki}, a normal congruence is Weingarten if and only if the principal radii $r_i$, $i=1,2$, satisfy the relation
\begin{equation}\label{eq:Radii}
\left|
\begin{array}{cc}
(r_1)_x   &  (r_2)_x \\
(r_1)_y   &  (r_2)_y
\end{array}
\right|=0.
\end{equation}
We shall now verify that this condition coincides with $W=0$ in the case of a normal congruence.

Consider the normal congruence of the graph of a function 
$$
g(x,y)=\frac{1}{2}\left( g_{20}x^2+g_{02}y^2\right)+\frac{1}{6}\left(g_{30}x^3+3g_{21}x^2y+3g_{12}xy^2+g_{03}y^3\right)+O(4).
$$
with $g_{20}\neq g_{02}$. It is not difficult to verify that, at the origin, 
$$
k_1=g_{20}, \ \ k_2=g_{02},\ \ (k_1)_x=g_{30}, \ \ (k_1)_y=g_{21}, \ \ (k_2)_x=g_{12}, \ \ (k_2)_y=g_{03}.
$$
Thus condition \eqref{eq:Radii} for $W$-congruence at a non-umbilical point can be written as 
\begin{equation}\label{eq:RadiiExample1}
g_{30}g_{03}-g_{21}g_{12}=0.
\end{equation}

The lines of the congruence are given by
$$
(x,y,g(x,y))+\lambda(-g_x,-g_y,1).
$$
Taking $\lambda=-g(x,y)$, we obtain
$$
(X,Y)=\left(x+g_xg,y+g_yg\right), \ \ (\xi_1,\xi_2)=(-g_x,-g_y).
$$
Now straightforward calculations show that, at $(x,y)=(0,0)$, 
$$
a=g_{20},\ a_X=g_{30},\ a_Y=g_{21}, \ \ \ d=g_{02},\ d_X=g_{12},\ d_Y=g_{03},
$$
$$
b=0,\ b_X=g_{21},\ b_Y=g_{12}, \ \ \ c=0,\ c_X=g_{21},\ c_Y=g_{12}.
$$
Thus the condition $W=0$ at a non-umbilical point is 
$$
g_{30}g_{03}-g_{21}g_{12}=0,
$$
which coincides with formula \eqref{eq:RadiiExample1}.

\section{Jet space at non-degenerate umbilical points}

We shall consider from now on analytical germs $\xi=(\xi^1,\xi^2)$ at the origin. We write
\begin{equation*}\label{Expansion1}
\xi^1(x,y)=\sum_{n=1}^{\infty}\left( \frac{1}{n!}\sum_{k=0}^n {{n}\choose{k}} p_{n-k,k}x^{n-k}y^k\right),
\end{equation*}
\begin{equation*}\label{Expansion2}
\xi^2(x,y)=\sum_{n=1}^{\infty}\left( \frac{1}{n!}\sum_{k=0}^n {{n}\choose{k}} q_{n-k,k}x^{n-k}y^k\right).
\end{equation*}
Given a smooth germ of function $g:(\mathbb{R}^2,(0,0))\to\mathbb{R}$, we shall denote by 
$g_{n-k,k}$ the $n$-order derivative of the germ $g$, $(n-k)$ times with respect to the first variable and $k$ times with respect to the second variable. When there is no risk of misunderstanding, we may also drop the comma.

\subsection{Jet space at a point of an analytical $W$-congruence}

We now describe the $C^{\omega}$-topology in a neighborhood of a point, that we assume to be the origin. 
Since $\xi$ is analytic, the condition $W=0$ in a neighborhood of the origin is equivalent to Equations \eqref{eq:WDerivatives},
for any $n\in\mathbb{N}$, $0\leq k\leq n$. Equations \eqref{eq:WDerivatives} determine relations between the coefficients $p_{n-k,k}$ and $q_{n-k,k}$. Some of these coefficients are free, and the other are obtained as functions of the free ones. We shall consider perturbations of the analytical germ $\xi$ by changing slightly a finite number of the free coefficients.

\begin{lemma}
Consider an analytical $W$-congruence defined by the direction vector $\xi$. Denote by $\tilde\xi$ a perturbation of $\xi$ obtained by changing a finite number of free coefficients. Then the convergence radius of $\tilde\xi$ is at least $\min(r,\tfrac{1}{3})$, where $r$ denotes the convergence radius of the initial $\xi$. 
\end{lemma}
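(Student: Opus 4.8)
The plan is to turn the relations $W_{n-k,k}=0$ into a backward recursion for the Taylor coefficients of $\tilde\xi$ and to bound its solution by a geometric majorant. Denote by $P_\alpha,Q_\alpha$ the Taylor coefficients of $\tilde\xi^1,\tilde\xi^2$ and set $c_m=\max_{|\alpha|=m}\bigl(|P_\alpha|+|Q_\alpha|\bigr)$. By \eqref{eq:DefineW}, $W$ is a sum of terms, each a product of one first-order and two second-order derivatives of $\xi$; hence, expanding by Leibniz, the coefficient of $W$ of order $n$ is a sum, over all ways of distributing the $n$ derivatives among the three factors, of products of three coefficients of $\tilde\xi$ whose orders add up to $n+5$, with the Leibniz multinomials as weights. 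Exactly one family of summands places all $n$ derivatives on a single second-order factor and thereby produces a coefficient of order $n+2$; this is the leading linear part, carrying the $1$-jet and $2$-jet of $\tilde\xi$ as multiplier. All other summands involve only coefficients of order $\le n+1$.

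First I would invoke the jet analysis of Sections 3 and 4 to fix the splitting into free and dependent coefficients and to guarantee that the leading linear operator $L_n$, restricted to the dependent coefficients of order $n+2$, is invertible with an inverse whose norm is $O(1)$ in $n$ once the polynomial factors produced by differentiation are removed. Solving $W_{n-k,k}=0$ for the dependent coefficients of order $n+2$ then yields an estimate of the form
$$
c_{n+2}\ \le\ \kappa\sum_{o_1+o_2+o_3=n+5,\ o_i\le n+1}\mathcal{L}(o_1,o_2,o_3)\,c_{o_1}c_{o_2}c_{o_3}\ +\ \kappa\,\varphi_{n+2},
$$
where $\mathcal{L}$ collects the Leibniz weights together with the polynomially bounded derivative factors, and $\varphi_m$ bounds the contribution of the free coefficients of $\tilde\xi$ of order $m$. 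Since only finitely many free coefficients are changed, $\varphi_m$ equals the bound for the corresponding coefficients of the original $\xi$ for $m$ large, so $\varphi_m\le C\,\rho^{-m}$ for every $\rho<r$.

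The radius bound now follows from a majorant induction with the ansatz $c_m\le M\sigma^m$, where $\sigma=\max(1/r,3)=1/\min(r,\tfrac13)$. The free part contributes the factor $1/r$, since $\varphi_m$ is controlled by the coefficients of the original $\xi$; the genuinely cubic part contributes the factor $3$, because after the factorials cancel the sum of the Leibniz weights over the three factors equals $3^{\,n}$, and the polynomial prefactors disappear upon taking the $\limsup$. Hence $\limsup_m c_m^{1/m}\le\sigma$, i.e. the convergence radius of $\tilde\xi$ is at least $\min(r,\tfrac13)$. I expect the main difficulty to be precisely the two points on which this argument rests: the uniform (in $n$) invertibility of the leading operator $L_n$ with a polynomially bounded inverse, which is where the non-degeneracy of the umbilical point and the structure of the jet space are used, and the combinatorial verification that the cubic convolution produces exactly the factor $3$ — and no larger constant — so that the derivative prefactors are absorbed in the $\limsup$ and the quadratic cross terms with $\xi$ remain subordinate to the threshold $\min(r,\tfrac13)$.
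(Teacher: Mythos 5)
Your proposal follows essentially the same route as the paper's proof: the authors likewise expand each relation $W_{n-k,k}=0$ by Leibniz into at most $6\cdot 3^{n}$ parcels, each a product of three factors, and conclude that the perturbed non-free coefficients of order $n$ are bounded by $C\cdot 3^{n}$, whence the radius of convergence of $\tilde\xi$ is at least $\min(r,\tfrac13)$. Your write-up only makes explicit what the paper leaves implicit --- the majorant induction and the invertibility of the leading linear coefficient, which in the paper is supplied by Corollaries \ref{Cor:JetSpace1} and \ref{Cor:JetSpace2} --- so the two arguments coincide in substance.
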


\begin{proof}
The non-free coefficients of order $n$ are obtained as the order $n$ derivatives of at most six parcels, each of them a product of three factors. Thus the derivative of order $n$ is obtained by a sum of at most $6\cdot 3^n$ parcels, each parcel with three factors. We conclude that 
the perturbation of the non-free coefficients of order $n$ are bounded by $C\cdot 3^n$, for some constant $C$, which proves the lemma.
\end{proof}

Fix a neighborhood $U_0$ of the origin where $\xi$ is given by its Taylor expansion at the origin and denote by $\mathcal{W}_0$ the space of line congruences defined in $U_0$ with the $C^{\omega}$-topology.
From the above lemma, a basis for the open sets of  $\mathcal{W}_0$ consists of the open sets of the Euclidean space of free coefficients $p_{n-k,k}$ and $q_{n-k,k}$, with $n\leq N$, $N\in\mathbb{N}$.   

\subsection{Non-degenerate umbilical points}

In this section, we shall consider umbilical points of $W$-congruences.  We say that an umbilical point of a $W$-congruence is {\it isolated} if it admits a neighborhood $U$ where it is the only umbilical point.
An isolated umbilical point is {\it stable} if it admits a neighborhood $U$ such that, for some neighborhood $\mathcal{U}$ of the $W$-congruence in the $C^{\omega}$-topology, every $W$-congruence in $\mathcal{U}$ admits a unique umbilical point in $U$.

We need some conditions in order to get $W$-stable umbilical points. At an umbilical point, we have that $p_{10}=q_{01}$ and $p_{01}=q_{10}=0$. Consider the following determinants:
\begin{equation*}
\Omega_{1}=\left|
\begin{array}{cc}
\xi^1_{xx} & \xi^1_{xy} \\
\xi^2_{xx} & \xi^2_{xy}
\end{array}
\right|
, \ \ \ 
\Omega_{2}=\left|
\begin{array}{cc}
\xi^1_{xy} & \xi^1_{yy} \\
\xi^2_{xy} & \xi^2_{yy}
\end{array}
\right|
, \ \ \ 
\Omega_{3}=\left|
\begin{array}{cc}
\xi^1_{xx} & \xi^1_{yy} \\
\xi^2_{xx} & \xi^2_{yy}
\end{array}
\right|,
\end{equation*}
and 
\begin{equation*}
J_{1}=\left|
\begin{array}{cc}
\xi^1_{xx}-\xi^2_{xy} & \xi^1_{xy}-\xi^2_{yy} \\
\xi^2_{xx} & \xi^2_{xy}
\end{array}
\right|,\ 
J_{2}=\left|
\begin{array}{cc}
\xi^1_{xx}-\xi^2_{xy} & \xi^1_{xy}-\xi^2_{yy} \\
\xi^1_{xy} & \xi^1_{yy}
\end{array}
\right|,\ 
J_{3}=\left|
\begin{array}{cc}
\xi^1_{xy} & \xi^1_{yy} \\
\xi^2_{xx} & \xi^2_{xy}
\end{array}
\right|.
\end{equation*}
We say that an umbilical point is {\it non-degenerate} if $\Omega_i\neq 0$ and $J_i\neq 0$, for some $i=1,2,3$.

\begin{Proposition}
A non-degenerate umbilical point is isolated and stable.
\end{Proposition}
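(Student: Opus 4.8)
The plan is to read off a structural identity hidden in formula \eqref{eq:DefineW}. Using the relations $a=-\xi^1_x$, $b=-\xi^1_y$, $c=-\xi^2_x$, $d=-\xi^2_y$ valid when $f=(x,y)$, and recognizing the three second-order minors that appear in \eqref{eq:DefineW} as $\Omega_1,\Omega_2,\Omega_3$, I would first verify the identity of functions
\[
W=2b\,\Omega_1-2c\,\Omega_2-(a-d)\,\Omega_3.
\]
Since $\xi$ defines a $W$-congruence we have $W\equiv 0$ in a neighborhood of the origin, so the three umbilical quantities $a-d$, $b$, $c$ obey the linear relation $2b\,\Omega_1-2c\,\Omega_2-(a-d)\,\Omega_3\equiv 0$ throughout that neighborhood. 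This is the whole point: the umbilical set is cut out by the three equations $a-d=0$, $b=0$, $c=0$, but on a $W$-congruence these three functions are linearly dependent.

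Next I would use the non-degeneracy hypothesis $\Omega_i\neq0$ at the origin to eliminate one defining equation. By continuity $\Omega_i\neq0$ on a whole neighborhood, so the relation lets one solve for the missing coordinate and shows that the umbilical set coincides locally with the zero set of just two of the functions: with $\{b=0,\ c=0\}$ if $\Omega_3\neq0$, with $\{c=0,\ a-d=0\}$ if $\Omega_1\neq0$, and with $\{b=0,\ a-d=0\}$ if $\Omega_2\neq0$. In each case the umbilical set near the origin is the zero set of a germ $G\colon(\mathbb{R}^2,0)\to(\mathbb{R}^2,0)$ assembled from the appropriate pair.

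The third step is a direct Jacobian computation. Differentiating the pair defining $G$ and comparing with the definitions of the $J_i$, one finds that the Jacobian of $G$ at the origin equals the corresponding $J_i$ up to sign: the Jacobian of $(b,c)$ is $J_3$, that of $(c,a-d)$ is $-J_1$, and that of $(b,a-d)$ is $-J_2$. Thus the hypothesis $J_i\neq0$ says exactly that $G$ is a local diffeomorphism at the origin, and the inverse function theorem gives that the origin is the only zero of $G$ in a small neighborhood. This proves that the umbilical point is isolated.

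For stability I would apply a quantitative form of the inverse function theorem. A $C^{\omega}$-small perturbation $\tilde\xi$ is again a $W$-congruence, so the identity $\tilde W=2\tilde b\,\tilde\Omega_1-2\tilde c\,\tilde\Omega_2-(\tilde a-\tilde d)\,\tilde\Omega_3\equiv 0$ persists, while $\tilde\Omega_i$ and $\tilde J_i$ stay close to $\Omega_i,J_i$ and hence nonzero on a fixed neighborhood. The perturbed map $\tilde G$ is $C^1$-close to $G$ with invertible derivative near the origin, so it has a unique zero $\tilde p$ in a fixed neighborhood $U$; since $\tilde\Omega_i(\tilde p)\neq0$, the linear relation forces the suppressed equation at $\tilde p$, so $\tilde p$ is a genuine umbilical point and is the only one in $U$. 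The main obstacle is exactly this last analytic step: securing existence and uniqueness of the perturbed zero on a neighborhood whose size does not shrink with the perturbation, so that the same $U$ works for every nearby $W$-congruence. The algebraic identity and the Jacobian computations, by contrast, are routine once the minors are identified.
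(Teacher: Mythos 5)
Your proof is correct and is essentially the paper's own argument: the identity $W=2b\,\Omega_1-2c\,\Omega_2-(a-d)\,\Omega_3$ is exactly what the paper uses implicitly when, for $i=2$, it solves $\eta^1_x-\eta^2_y=0$, $\eta^1_y=0$ via the Jacobian $J_2$ and then deduces $\eta^2_x=0$ from $W=0$ and $\Omega_2\neq 0$. The only difference is that you write out the identity and all three cases explicitly, while the paper treats $i=2$ and declares the others similar.
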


\begin{proof}
We prove the proposition for $i=2$, the other cases being similar. Let $\xi$ be a directional vector field with $\Omega_2\neq 0$ and $J_2\neq 0$ at the origin. 
Consider a $W$-congruence $\eta=(\eta^1,\eta^2)$ $C^{\omega}$-close to $\xi$. 
We first solve the equations 
\begin{equation}\label{eq:J2}
\begin{cases}
\eta^1_x-\eta^2_y=0\\
\eta^1_y=0
\end{cases}
\end{equation}
The jacobian matrix is 
$$
\left[
\begin{array}{cc}
\eta^1_{xx}-\eta^2_{xy} & \eta^1_{xy}-\eta^2_{yy}\\
\eta^1_{xy} & \eta^1_{yy}
\end{array}
\right],
$$
which is close to $J_2$ and hence non-zero, which implies that the Equations \eqref{eq:J2} admit a unique solution $x=x_0$, $y=y_0$ close to the origin.
Now the Equation $W=0$
implies that 
$\eta_x^2\Omega_2(\eta)=0$
at $(x_0,y_0)$. Since $\Omega_2(\eta)$ is close to $\Omega_2$ and hence non-zero, we conclude that $\eta_x^2(x_0,y_0)=0$ and so $(x_0,y_0)$ is umbilical for $\eta$.
\end{proof}

\begin{remark}
At an umbilical point, the value of $p_{10}=q_{01}$ is not relevant for the condition $W=0$. It is also not relevant 
for the discriminant $J$. Thus, when convenient, we may assume that $p_{10}=q_{01}=0$.
\end{remark}

\begin{lemma}
Assume that the origin is a non-degenerate umbilical point. By linear changes in the line congruence, we may assume that
\begin{equation}\label{Conditions2jet}
p_{02}=p_{20}=q_{11}=0,\ \ p_{11}\neq 0, \ \ q_{02}\neq 0.
\end{equation}
Under these conditions, the non-degenerate hypothesis implies that $p_{11}\neq q_{02}$. Moreover, if $p_{11}+q_{02}\neq 0$ then $q_{20}=0$.
\end{lemma}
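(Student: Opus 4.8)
The plan is to regard the second-order data at the umbilical point as the pair of binary quadratic forms
$$
P=\begin{pmatrix} p_{20} & p_{11}\\ p_{11} & p_{02}\end{pmatrix},\qquad
Q=\begin{pmatrix} q_{20} & q_{11}\\ q_{11} & q_{02}\end{pmatrix},
$$
the Hessians of $\xi^1$ and $\xi^2$ at the origin, and to normalize this pair by admissible linear changes, which act on $(P,Q)$ by a base change (congruence $P\mapsto A^{T}PA$, $Q\mapsto A^{T}QA$) together with a substitution mixing the two components of the direction field. First I would record that non-degeneracy forces $P$ and $Q$ to be linearly independent: if $Q=\lambda P$, then in each $\Omega_i$ one row is $\lambda$ times the other, so all three $\Omega_i$ vanish, contradicting the hypothesis. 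Independence makes the pencil $\{sP+tQ\}$ genuinely two-dimensional.

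I would then reduce in three moves. (i) Using the substitution, replace the pair by a basis of its pencil whose first member $P$ is indefinite. Such a member exists: if the pencil had none, every nondegenerate member would be definite, and normalizing one to the identity by a base change would give $\det(sP+tQ)=(s+\alpha t)(s+\beta t)$ with $\alpha=\beta$ (otherwise the members with $s/t$ strictly between $-\alpha$ and $-\beta$ are indefinite), hence $Q\propto P$, a contradiction. (ii) By a base change bring this indefinite form to the anti-diagonal shape $\begin{pmatrix}0 & p_{11}\\ p_{11} & 0\end{pmatrix}$ with $p_{11}\neq0$, achieving $p_{20}=p_{02}=0$. (iii) Subtract the multiple $(q_{11}/p_{11})P$ from $Q$; since $P$ is purely off-diagonal this leaves it unchanged and cancels the off-diagonal entry of $Q$, giving $q_{11}=0$. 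A coordinate swap, which preserves the anti-diagonal shape of $P$, then arranges $q_{02}\neq0$, using that the now diagonal $Q$ is nonzero by independence.

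The step I expect to be the main obstacle is making precise which changes are admissible and checking that they realize (i)--(iii) while keeping the origin umbilical. The difficulty is that a pure substitution $M$ sends the shape operator to $MS$, so at an umbilical point $S=\mu I$ it produces $\mu M$, which is scalar only when $M$ is; thus the genuinely umbilicity-preserving moves are the conjugations $S\mapsto ASA^{-1}$ induced by ambient linear maps (which tie the base change to the substitution) together with the projective changes fixing the central line. One has to verify that this full group still carries out the reduction above; equivalently, that the six-dimensional space of admissible $2$-jets is exhausted by the normal form, for which the extra projective (elation-type) freedom is needed beyond the ambient $GL(2)$.

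For the two consequences I would simply substitute the normal form $p_{20}=p_{02}=q_{11}=0$ into the determinants. One gets $\Omega_3=J_3=0$ identically, while
$$
\Omega_1=-p_{11}q_{20},\quad \Omega_2=p_{11}q_{02},\quad J_1=-(p_{11}-q_{02})q_{20},\quad J_2=-(p_{11}-q_{02})p_{11}.
$$
Hence non-degeneracy can only be realized with $i=1$ or $i=2$, and in both cases $J_i\neq0$ forces $p_{11}\neq q_{02}$. For the last assertion I would compute the first-order coefficient of the Weingarten function in this frame. Since the three leading factors $\xi^2_y-\xi^1_x$, $\xi^1_y$ and $\xi^2_x$ all vanish at the origin, only two products contribute to the first derivative, giving
$$
W_{10}=2p_{11}^{2}q_{20}+2p_{11}q_{02}q_{20}=2p_{11}q_{20}(p_{11}+q_{02}).
$$
As the congruence is Weingarten, $W\equiv0$ and so $W_{10}=0$; since $p_{11}\neq0$ this gives $q_{20}(p_{11}+q_{02})=0$, whence $p_{11}+q_{02}\neq0$ implies $q_{20}=0$.
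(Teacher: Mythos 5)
Your computations of the two consequences are correct and in fact more explicit than the paper's own: with $p_{20}=p_{02}=q_{11}=0$ one indeed gets $\Omega_3=J_3=0$, $\Omega_1=-p_{11}q_{20}$, $\Omega_2=p_{11}q_{02}$, $J_1=-(p_{11}-q_{02})q_{20}$, $J_2=-(p_{11}-q_{02})p_{11}$, so non-degeneracy forces $p_{11}\neq q_{02}$; and $W_{10}=2p_{11}q_{20}(p_{11}+q_{02})$ agrees with the paper's Equation \eqref{eq:W10} after substituting $m=-q_{02}/p_{11}$. Your observation that non-degeneracy forces the two Hessians to be linearly independent (all $\Omega_i$ are the $2\times 2$ minors of the matrix of second derivatives) is also correct and useful.

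The gap is in the normalization, and it is exactly the one you flag yourself but do not close. Steps (i), (iii) and the final coordinate swap all treat the base change and the mixing of the components $(\xi^1,\xi^2)$ as independent moves, but the admissible ambient map $A$ acts by $\tilde H^i=\sum_j A_{ij}(A^{-1})^{T}H^jA^{-1}$, tying them together. Concretely, in step (iii) the shear that replaces $\xi^2$ by $\xi^2-(q_{11}/p_{11})\xi^1$ is $A=\bigl(\begin{smallmatrix}1&0\\ -q_{11}/p_{11}&1\end{smallmatrix}\bigr)$, and the accompanying base change $A^{-1}$ sends the anti-diagonal $P$ to $\bigl(\begin{smallmatrix}2q_{11}&p_{11}\\ p_{11}&0\end{smallmatrix}\bigr)$ --- it does \emph{not} leave it unchanged; likewise the "coordinate swap" exchanges $H^1$ with $H^2$ rather than merely permuting $q_{20}$ and $q_{02}$. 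So the reduction is not realized by the moves as described, and the appeal to extra projective freedom is left unverified. The paper takes a different route that avoids most of this: it uses the Weingarten relations $W_x=W_y=0$ \emph{at the origin} to rule out the cases where both $2$-jets are definite or both are degenerate, concluding that one of $Q^1,Q^2$ itself is indefinite, and only then normalizes coordinates. You can repair your argument in the same spirit with almost no group theory: once coordinates are chosen so that $Q^1=p_{11}xy$, one computes $W_{01}=2p_{11}q_{11}q_{02}$ and $W_{10}=2p_{11}q_{20}(p_{11}+q_{02})$; if $q_{02}=0$ then $W_{10}=0$ forces $q_{20}=0$, making $Q^2$ proportional to $Q^1$ and contradicting your own independence observation, so $q_{02}\neq 0$, and then $W_{01}=0$ forces $q_{11}=0$ with no further transformation at all. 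In other words, the conditions $q_{11}=0$ and $q_{02}\neq 0$ should be extracted from the equation $W=0$ rather than from the group action.
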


\begin{proof}
By the above remark, we may assume $p_{10}=q_{01}=0$. Denote the $2$-jet of $\xi^i$ by $Q^i$. 
If both $Q^1$ and $Q^2$ are definite, by a linear transformation we may assume that $Q^2=x^2+y^2$,
and then, by a rotation, $Q^1=\tfrac{1}{2}\left(p_{20}x^2+p_{02}y^2  \right)$. Now the conditions $W_x=W_y=0$ at the origin gives that $p_{20}=p_{02}$, which contradicts the non-degeneracy hypothesis.

Assume now that both forms are degenerate non-zero. We may assume $Q^2=\tfrac{1}{2}q_{02}y^2$, $q_{02}\neq 0$. Then, by the linear transformation, we may assume that $p_{02}=0$. Since $Q^1$ is degenerate, we conclude that 
$p_{11}=0$. Now the conditions $W_x=W_y=0$ at the origin implies that $p_{20}q_{02}=0$, again contradicting the non-degeneracy hypothesis. 

We conclude that one of the quadratic forms, say $Q^1$, is indefinite. We may assume that its asymptotic directions are the coordinate axes, which is equivalent to say that $Q^1=p_{11}xy$, $p_{11}\neq 0$. Then by a linear transformation, we obtain that $q_{11}=0$. The condition $W_x=0$ implies that
$q_{20}(p_{11}+q_{02})=0$. If $q_{02}=0$, then $q_{20}=0$, contradicting the non-degenerate condition. We conclude that $q_{02}\neq 0$, thus proving the lemma. 
\end{proof}

From now on, we shall assume that conditions \eqref{Conditions2jet} hold and that $p_{11}\neq q_{02}$. Denote 
$$
m=-\tfrac{q_{02}}{p_{11}}. 
$$
From the above lemma, if $m\neq 1$, then necessarily $q_{20}=0$. In this case, the direction $(1,0)$ is asymptotic for both $Q^1$ and $Q^2$. In next section this direction will appear as the kernel of the hessian of the discriminant at some stable umbilical points.

\subsection{Jet space at a non-degenerate umbilical point}

Denote 
\begin{equation}\label{eq:Defineh}
h=\frac{ (\xi^1_x-\xi^2_y)(\xi^2_{xx}\xi^1_{yy}-\xi^1_{xx}\xi^2_{yy})+2\xi^1_y(\xi^1_{xx}\xi^2_{xy}-\xi^2_{xx}\xi^1_{xy})}{2(\xi^1_{xy}\xi^2_{yy}-\xi^1_{yy}\xi^2_{xy})}
\end{equation}

Then, at a non-degenerate umbilical point, $h=\xi^2_x$. In order to understand the jet space at an umbilical point, we need the following two lemmas:

\begin{lemma}\label{Lemma:JetSpace1}
For $0\leq k\leq n$, 
$$
h_{n-k,k}= \tfrac{1}{m}(n-k)q_{n+1-k,k}-\frac{q_{20}}{mq_{02}}(n-k)q_{n-1-k,k+2}+\Gamma(n+1,n),
$$
where $\Gamma(n+1,n)$ denotes terms that involve derivatives of order at most $(n+1)$ in $\xi^1$ and at most $n$ in $\xi^2$.
\end{lemma}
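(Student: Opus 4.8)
My plan is to treat $h$ as the rational function $h=N/D$ defined by Equation~\eqref{eq:Defineh}, with numerator
\[
N=(\xi^1_x-\xi^2_y)(\xi^2_{xx}\xi^1_{yy}-\xi^1_{xx}\xi^2_{yy})+2\xi^1_y(\xi^1_{xx}\xi^2_{xy}-\xi^2_{xx}\xi^1_{xy})
\]
and denominator $D=2\Omega_2=2(\xi^1_{xy}\xi^2_{yy}-\xi^1_{yy}\xi^2_{xy})$, and to read off the coefficients $h_{n-k,k}$ from the identity $Dh=N$. Under the normalization \eqref{Conditions2jet} together with $p_{10}=q_{01}=0$, the entire $1$-jet of $\xi$ vanishes at the origin and the only nonzero entries of the $2$-jet are $p_{11}$, $q_{20}$, $q_{02}$; in particular $D(0)=2p_{11}q_{02}\neq0$, so $h$ is a genuine analytic germ. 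Since the statement only determines $h_{n-k,k}$ modulo $\Gamma(n+1,n)$, the task reduces to extracting the part of $h_{n-k,k}$ that is linear in the Taylor coefficients $q_{\alpha,\beta}$ of $\xi^2$ with $\alpha+\beta=n+1$, the fixed $2$-jet being treated as a tuple of constants. As initial data I would compute directly from \eqref{eq:Defineh} that $N(0)=0$, whence $h(0)=0$, and that $h_{0,1}=0$ while $h_{1,0}=N_x(0)/D(0)=-p_{11}q_{20}/q_{02}=q_{20}/m$; this last value is exactly what will generate the second term of the formula.

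I would then differentiate $Dh=N$ by the Leibniz rule and isolate the leading term $D(0)h_{n-k,k}$, obtaining
\[
D(0)\,h_{n-k,k}=N_{n-k,k}-\sum_{(i,j)\neq(0,0)}\binom{n-k}{i}\binom{k}{j}D_{i,j}\,h_{n-k-i,k-j},
\]
and locate the order-$(n+1)$ coefficients of $\xi^2$ on the right-hand side. In $N_{n-k,k}$ such a coefficient can appear only when a second derivative of $\xi^2$ carries $n-1$ of the differentiations while the other two factors are evaluated on the $2$-jet; the vanishing of the $1$-jet and of $\xi^1_{xx},\xi^1_{yy},\xi^2_{xy}$ at the origin kills every monomial except $-2\xi^1_y\xi^1_{xy}\xi^2_{xx}$, in which $\xi^2_{xx}$ must absorb $(n-1-k,k)$ derivatives (giving $q_{n+1-k,k}$), the single remaining $x$-derivative turns $\xi^1_y$ into $p_{11}$, and $\xi^1_{xy}$ stays equal to $p_{11}$; the multinomial weight is $n-k$, so the contribution is $-2(n-k)p_{11}^2q_{n+1-k,k}$, which after division by $D(0)$ and use of $-p_{11}/q_{02}=1/m$ is exactly $\tfrac1m(n-k)q_{n+1-k,k}$. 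In the correction sum, an order-$(n+1)$ coefficient of $\xi^2$ can be supplied only by the factor $D_{i,j}$, whose highest $\xi^2$-content is $2p_{11}q_{i,j+2}$ (from $\xi^2_{yy}$ absorbing all of $(i,j)$); this forces $i+j=n-1$, hence $h$ to be differentiated to order one, and since $h_{0,1}=0$ only the pairing with $h_{1,0}=q_{20}/m$ survives. With $(i,j)=(n-1-k,k)$ and weight $n-k$ this contributes $-(n-k)\,2p_{11}q_{n-1-k,k+2}\,(q_{20}/m)$, i.e.\ $-\tfrac{q_{20}}{mq_{02}}(n-k)q_{n-1-k,k+2}$ after dividing by $D(0)$.

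I expect the combinatorial bookkeeping certifying that everything else lands in $\Gamma(n+1,n)$ to be the main obstacle, and I would organize it as an induction on $n$. The inductive hypothesis is that $h_{a,b}$ for $a+b<n$ involves only coefficients of $\xi^2$ of order at most $a+b+1\leq n$, which guarantees that in the correction sum the highest coefficient of $\xi^2$ is always carried by $D_{i,j}$ rather than by $h_{n-k-i,k-j}$, so that the only linear order-$(n+1)$ term produced is the one found above. It then remains to verify that in $N_{n-k,k}$ no other distribution of derivatives yields an order-$(n+1)$ coefficient of $\xi^2$: whenever a second derivative of $\xi^2$ absorbs fewer than $n-1$ differentiations, or when (as in the two monomials $\xi^2_y\xi^1_{xx}\xi^2_{yy}$ and $\xi^2_y\xi^1_{yy}\xi^2_{xx}$ that are quadratic in $\xi^2$) two $\xi^2$-factors share the differentiations, the requirement that the spectator factors be nonzero on the $2$-jet forces every coefficient of $\xi^2$ down to order at most $n$, so such terms lie in $\Gamma(n+1,n)$; the coefficients of $\xi^1$ that occur are seen to have order at most $n+1$ by the same count. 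Collecting the two surviving linear contributions then yields the asserted formula.
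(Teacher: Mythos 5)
Your proposal is correct and follows the same strategy as the paper's proof: isolate the contributions to $h_{n-k,k}$ that are linear in the order-$(n+1)$ Taylor coefficients of $\xi^2$, using the vanishing of the $1$-jet and of $p_{20},p_{02},q_{11}$ at the origin to kill every source except the numerator monomial $-2\xi^1_y\xi^2_{xx}\xi^1_{xy}$ (giving $\tfrac1m(n-k)q_{n+1-k,k}$) and the $\xi^1_{xy}\xi^2_{yy}$ part of the denominator paired with $h_{1,0}=q_{20}/m$ (giving $-\tfrac{q_{20}}{mq_{02}}(n-k)q_{n-1-k,k+2}$). Your reorganization via the identity $Dh=N$, the Leibniz expansion, and the induction controlling the correction sum is simply a more explicit bookkeeping of the paper's one-line identification of the two relevant terms, and it arrives at the same coefficients.
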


\begin{proof}
From Equation \eqref{eq:Defineh}, the only terms involving derivatives of degree $n+1$ in $\xi^2$ that appear in $h_{n-k,k}$ are
$-\frac{1}{q_{02}}(\xi^1_y\xi^2_{xx})_{n-k,k}$  and $-2q_{20}\left(\frac{\xi^1_y}{\xi^2_{yy}}\right)_{n-k,k}$. Moreover, the terms involving derivatives of degree $n+1$ in $\xi^2$ that appear are exactly
$$
-\frac{p_{11}}{q_{02}}(n-k)q_{n+1-k,k},\ \ \ \frac{p_{11}q_{20}}{q_{02}^2}(n-k)q_{n-1-k,k+2},
$$
thus proving the lemma. 
\end{proof}

\begin{corollary}\label{Cor:JetSpace1}
Assume that $m\neq 1$ and $n-k\neq m$. Then the equation $W_{n-k,k}=0$ can be written in the form
$$
q_{n+1-k,k}=\Gamma(n+1,n), 
$$
where $\Gamma(n+1,n)$ denotes terms that involve derivatives of order at most $(n+1)$ in $\xi^1$ and at most $n$ in $\xi^2$.
\end{corollary}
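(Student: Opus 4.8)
The plan is to reduce the equation $W_{n-k,k}=0$ to the vanishing of the single coefficient $(\xi^2_x-h)_{n-k,k}$ and then read off $q_{n+1-k,k}$ directly from Lemma \ref{Lemma:JetSpace1}. The starting point is the algebraic identity obtained by comparing \eqref{eq:DefineW} with \eqref{eq:Defineh}, namely $W=2\Omega_2\,(\xi^2_x-h)$, where $\Omega_2=\xi^1_{xy}\xi^2_{yy}-\xi^1_{yy}\xi^2_{xy}$. Under the normalization \eqref{Conditions2jet} one has $\Omega_2(0)=p_{11}q_{02}\neq 0$, so $\Omega_2$ is a unit in the ring of analytic germs at the origin and $1/\Omega_2$ is analytic there.

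Because $\Omega_2$ is a unit, the germ equation $W=0$ is equivalent to $\xi^2_x-h=0$; more precisely, the system $\{W_{i,j}=0\}$ and the system $\{(\xi^2_x-h)_{i,j}=0\}$ impose exactly the same conditions on the jet of $\xi$. Thus, in the order-by-order determination of the jet, once the equations of order $<n$ have been imposed the factor $\xi^2_x-h$ vanishes to order $n-1$, and applying Leibniz to $W=2\Omega_2(\xi^2_x-h)$ leaves only the term in which $\xi^2_x-h$ is fully differentiated, giving $W_{n-k,k}=2p_{11}q_{02}\,(\xi^2_x-h)_{n-k,k}$. Hence the equation $W_{n-k,k}=0$ may be replaced by $(\xi^2_x-h)_{n-k,k}=0$.

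It then remains to evaluate this coefficient. Since $m\neq 1$, the lemma normalizing the $2$-jet (see \eqref{Conditions2jet} and the discussion following it) forces $q_{20}=0$, so the middle term of Lemma \ref{Lemma:JetSpace1} drops out and $h_{n-k,k}=\frac{1}{m}(n-k)q_{n+1-k,k}+\Gamma(n+1,n)$. As $(\xi^2_x)_{n-k,k}=q_{n+1-k,k}$, I obtain $(\xi^2_x-h)_{n-k,k}=\frac{m-(n-k)}{m}\,q_{n+1-k,k}+\Gamma(n+1,n)$. The hypotheses $n-k\neq m$ and $m\neq 0$ (the latter because $p_{11},q_{02}\neq 0$) make the coefficient of $q_{n+1-k,k}$ nonzero, so the equation can be solved for $q_{n+1-k,k}$, the nonzero constant factor being absorbed into $\Gamma$; this yields $q_{n+1-k,k}=\Gamma(n+1,n)$, as claimed.

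The hard part will be the second step. The product $2\Omega_2(\xi^2_x-h)$ is not by itself ``clean'' at order $n+1$ in $\xi^2$, since differentiating the second-order $\xi^2$-entries of $\Omega_2$ reintroduces spurious order-$(n+1)$ coefficients; it is only the quotient $h$ that is clean, as Lemma \ref{Lemma:JetSpace1} shows. The cleanest way around this is to work with the equivalent system $\xi^2_x=h$ rather than with $W$ directly, using that the lower-order equations have already been imposed. I would therefore take care to justify carefully that $\Omega_2$ being a unit makes the two systems equivalent, and that after this reduction no order-$(n+1)$ coefficient other than $q_{n+1-k,k}$ survives.
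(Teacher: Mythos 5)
Your proof is correct and follows essentially the same route as the paper: the paper's own (very terse) argument implicitly uses the factorization $W=2\Omega_2(\xi^2_x-h)$, the vanishing of $q_{20}$ when $m\neq 1$, and Lemma \ref{Lemma:JetSpace1} to solve for $q_{n+1-k,k}$ with the nonzero coefficient $\tfrac{m-(n-k)}{m}$. Your write-up merely makes explicit the inductive Leibniz argument showing that, modulo the lower-order equations, $W_{n-k,k}=0$ is equivalent to $(\xi^2_x-h)_{n-k,k}=0$, which the paper leaves tacit.
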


\begin{proof}
Since $m\neq 1$, we have that $q_{20}=0$. Moreover, since $h_{n-k,k}=q_{n+1-k,k}$ and $n-k\neq m$, Lemma \ref{Lemma:JetSpace1} says that, in the equation $W_{n-k,k}$ can be written in the the above form. 
\end{proof}

\begin{lemma}\label{Lemma:JetSpace2}
For , $m>1$,  $n-k=m$,  $k\geq 1$,
$$
h_{m,k}=q_{m+1,k} -\tfrac{k(m+1)}{2} p_{m+2,k-1}+\Gamma(n,n),
$$
where $\Gamma(n,n)$ denotes terms that involve derivatives of order at most $n$ in $\xi^1$ and at most $n$ in $\xi^2$.
\end{lemma}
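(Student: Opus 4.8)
<br>

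The plan is to mimic the structure of the proof of Lemma \ref{Lemma:JetSpace1}, but now track exactly which terms survive when $n-k=m$, since this is precisely the resonant case where the leading $q_{n+1-k,k}$ coefficient in $h_{n-k,k}$ fails to be isolated cleanly. Starting from the defining formula \eqref{eq:Defineh} for $h$, I would compute the $(m,k)$-th derivative at the origin and keep only those contributions that are of top order, discarding anything that lands in $\Gamma(n,n)$ (derivatives of order at most $n$ in both $\xi^1$ and $\xi^2$). The key observation driving the calculation is that under the normalizations \eqref{Conditions2jet}, together with $q_{20}=0$ when $m\neq 1$ (here $m>1$, so $q_{20}=0$), the $2$-jet is $Q^1=p_{11}xy$ and $Q^2=\tfrac12 q_{02}y^2$ with $q_{02}=-m\,p_{11}$, so that $\xi^1_{xy}\approx p_{11}$, $\xi^2_{yy}\approx q_{02}$, and $\xi^2_{xx},\xi^2_{xy},\xi^1_{yy}$ all vanish to second order at the origin.

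First I would identify in the numerator of \eqref{eq:Defineh} the terms that can produce an order-$(n+1)$ derivative in $\xi^2$, namely $2\xi^1_y\,\xi^2_{xx}$ (via differentiating $\xi^2_{xx}$) and the $-(\xi^1_x-\xi^2_y)\xi^1_{xx}\xi^2_{yy}$ piece. When $n-k=m$, differentiating $\xi^2_{xx}$ a total of $m$ times in $x$ and $k$ times in $y$ yields $q_{m+2-?,\,?}$-type coefficients, but with the $\xi^1_y\approx p_{11}x$ prefactor the genuinely surviving contribution is what produces the stated $q_{m+1,k}$ leading term after dividing by the denominator $2\xi^1_{xy}\xi^2_{yy}\approx 2p_{11}q_{02}$. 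The new feature compared to Lemma \ref{Lemma:JetSpace1} is the appearance of a top-order derivative of $\xi^1$: because $n-k=m$ is exactly the order at which the prefactor $\xi^1_y$ (carrying a factor of $x$) lowers the effective $x$-differentiation count by one, a term $p_{m+2,k-1}$ of order $n+1$ in $\xi^1$ escapes into the leading part rather than into $\Gamma$. I would pin down its coefficient $-\tfrac{k(m+1)}{2}$ by carefully applying the Leibniz rule to the relevant product, counting the binomial factor from distributing $k$ $y$-derivatives and the $x$-derivative that acts on the linear prefactor, and then dividing through by $2p_{11}q_{02}=-2m\,p_{11}^2$.

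The main obstacle is the bookkeeping: several products in \eqref{eq:Defineh} each contribute fragments of order $n+1$, and I must verify that after using the $2$-jet normalizations and the substitution $q_{02}=-m\,p_{11}$ all of these fragments either cancel, collapse into $\Gamma(n,n)$, or combine into exactly the two advertised monomials $q_{m+1,k}$ and $p_{m+2,k-1}$. In particular I would need to confirm that the $\xi^2_{yy}$-driven term $-(\xi^1_x-\xi^2_y)\xi^1_{xx}\xi^2_{yy}$, which naively could contribute an order-$(n+1)$ derivative of $\xi^1$ through $\xi^1_{xx}$, in fact only does so through the single $p_{m+2,k-1}$ channel once the prefactors are evaluated to leading order (recall $\xi^1_x-\xi^2_y$ vanishes at the origin and $\xi^1_{xx}$ vanishes to first order, so only one derivative-configuration reaches full order $n+1$). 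Isolating the correct numerical coefficient $k(m+1)/2$ from these competing contributions, and checking the sign, is the delicate step; everything else is a routine Leibniz-rule computation of the sort already compressed into ``straightforward calculations'' elsewhere in the paper.
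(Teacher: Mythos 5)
Your proposal follows essentially the same route as the paper's proof: isolate the unique sources of order-$(n+1)$ derivatives of $\xi^2$ (the $-2\xi^1_y\xi^2_{xx}\xi^1_{xy}$ term, which by Lemma \ref{Lemma:JetSpace1} with $q_{20}=0$ and $n-k=m$ contributes exactly $q_{m+1,k}$) and of $\xi^1$ (the $-(\xi^1_x-\xi^2_y)\xi^1_{xx}\xi^2_{yy}$ term, contributing $-\tfrac{1}{2p_{11}}(p_{11}-q_{02})\,k\,p_{m+2,k-1}=-\tfrac{k(m+1)}{2}p_{m+2,k-1}$), with everything else falling into $\Gamma(n,n)$ because the remaining prefactors vanish at the origin. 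Two small slips in your narrative, neither fatal: the derivative that must land on the prefactor $\xi^1_x-\xi^2_y$ is a $y$-derivative (an $x$-derivative gives $p_{20}-q_{11}=0$), which is what produces the binomial factor $k$ and the index $k-1$; and the $p_{m+2,k-1}$ term is not created by the resonance $n-k=m$ --- it is present in $h_{n-k,k}$ for every $k\geq 1$ and is only exhibited here because, when $n-k=m$, the $q_{n+1-k,k}$ contribution cancels in the equation $h_{n-k,k}=q_{n+1-k,k}$ and the order-$(n+1)$ part in $\xi^1$ can no longer be hidden in the remainder.
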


\begin{proof}
By the above lemma, since $m\neq 1$, the only term of order $n+1$ in $\xi^2$ is $q_{m+1,k}$. On the other hand, the only term 
of order $(n+1)$ in $\xi^1$ is
$-\frac{1}{2p_{11} }\left((\xi^1_x-\xi^2_y)\xi^1_{xx}\right)_{m,k}$. Moreover, the term involving derivatives of degree $n+1$ in $\xi^2$ that appear is exactly
$$
-\frac{1}{2p_{11}}(p_{11}-q_{02})kp_{m+2,k-1}
$$
thus proving the lemma. 
\end{proof}

The following corollary is straightforward:

\begin{corollary}\label{Cor:JetSpace2}
Assume that $m\in\mathbb{N}$, $m>1$, $n-k=m$, $k\geq 1$. Then the equation $W_{n-k,k}=0$ can be written in the form
$$
p_{n-k+2,k-1}=\Gamma(n,n),
$$
where $\Gamma(n,n)$ denotes terms that involve derivatives of order at most $n$ in $\xi^1$ and at most $n$ in $\xi^2$.
\end{corollary}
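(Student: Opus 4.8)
\emph{Proof proposal.} The plan is to reduce the statement, exactly as in the proof of Corollary~\ref{Cor:JetSpace1}, to the algebraic computation already carried out in Lemma~\ref{Lemma:JetSpace2}. The starting observation is that comparing Formula~\eqref{eq:DefineW} with the definition \eqref{eq:Defineh} yields the factorization
$$
W=2\,\Omega_2\,(\xi^2_x-h),\qquad \Omega_2=\xi^1_{xy}\xi^2_{yy}-\xi^1_{yy}\xi^2_{xy}.
$$
Under the normalization \eqref{Conditions2jet} one has $\Omega_2(0)=p_{11}q_{02}\neq 0$, so that, modulo the lower-order relations $W_{n'-k',k'}=0$ with $n'<n$ (which make $(\xi^2_x-h)_{n'-k',k'}$ vanish), a Leibniz expansion of the product above shows that the equation $W_{n-k,k}=0$ is equivalent to the single top-order relation $(\xi^2_x-h)_{n-k,k}=0$, that is, to $h_{n-k,k}=q_{n+1-k,k}$. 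This is precisely the reformulation of the Weingarten condition already used in Corollary~\ref{Cor:JetSpace1}.

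I would then specialize to the hypotheses $n-k=m$ and $k\geq 1$, so that the relation reads $h_{m,k}=q_{m+1,k}$, and substitute the expression for $h_{m,k}$ provided by Lemma~\ref{Lemma:JetSpace2}, namely $h_{m,k}=q_{m+1,k}-\tfrac{k(m+1)}{2}\,p_{m+2,k-1}+\Gamma(n,n)$. The two copies of $q_{m+1,k}$ cancel, leaving
$$
\tfrac{k(m+1)}{2}\,p_{m+2,k-1}=\Gamma(n,n).
$$
Because $k\geq 1$ and $m>1$, the scalar $\tfrac{k(m+1)}{2}$ is nonzero, so dividing by it and absorbing the constant into the right-hand side gives $p_{m+2,k-1}=\Gamma(n,n)$. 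Rewriting $m=n-k$ turns $p_{m+2,k-1}$ into $p_{n-k+2,k-1}$, which is the asserted form; note that $(m+2)+(k-1)=n+1$, so this equation genuinely solves for a fresh coefficient of order $n+1$ in $\xi^1$ in terms of data of order at most $n$.

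The only delicate point, and thus the sole candidate for an obstacle, is the passage from $W_{n-k,k}=0$ to $h_{n-k,k}=q_{n+1-k,k}$: this is not an identity of germs but a consequence of the factorization together with the inductive hypothesis that the congruence is already Weingarten to order $n-1$, which is what kills every cross term in the Leibniz expansion except $2\Omega_2(0)\,(\xi^2_x-h)_{n-k,k}$. Granting this reduction, the remainder is the purely formal cancellation displayed above, which is why the corollary is indeed straightforward once Lemma~\ref{Lemma:JetSpace2} is in hand.
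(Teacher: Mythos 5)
Your proposal is correct and follows the route the paper intends (the paper only remarks that the corollary is ``straightforward''): the factorization $W=2\Omega_2(\xi^2_x-h)$ with $\Omega_2(0)=p_{11}q_{02}\neq 0$ reduces $W_{m,k}=0$ to $h_{m,k}=q_{m+1,k}$, and substituting Lemma~\ref{Lemma:JetSpace2} cancels $q_{m+1,k}$ and isolates $p_{m+2,k-1}=p_{n-k+2,k-1}$ after dividing by the nonzero constant $\tfrac{k(m+1)}{2}$. Your explicit justification of the reduction from $W_{n-k,k}=0$ to $(\xi^2_x-h)_{n-k,k}=0$ via the lower-order relations is a welcome detail that the paper leaves implicit.
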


These relations show that, for each equation obtained as a derivative of $W$ of order $n$, we can write some coefficient of order $(n+1)$ as a function of other coefficients of order at most $(n+1)$. There is one exception to this behavior, the equation $W_{m0}=0$. In fact, the above lemmas are not conclusive when $k=0$ and $n=m$. Nevertheless, in all the examples below, the equation $W_{m0}=0$ holds automatically, and so does not determines a new relation among the coefficients.

Since $W_{m0}=0$ at the origin in all examples of this paper, we expect that this equation holds in general, and so we formulate the following conjecture: For any $m\in\mathbb{N}$,
\begin{equation}\label{Conjecture1}
W_{m0}=0.
\end{equation}

\medskip\noindent
If this conjecture is true, the space of jets at an umbilical point is completely determined by Lemmas \ref{Lemma:JetSpace1} and \ref{Lemma:JetSpace2}. 
Since we have verified the conjecture for $1\leq m\leq 4$, we can describe completely the jet space in these cases. The same is true if $m\not\in\mathbb{N}$, since in this case the conjecture is vacuously true. 

\section{Stable umbilics of type $A_m$, $m=1,2,3,4$.}

\subsection{Type of singularity of a germ}

Consider a germ $g$ of a real function at a singular point, which we assume to be the origin. We say that $g$ is of type $A_m$ if it is possible, by a change of variables, to write it as 
$$
g=y^2+x^{m+1}, 
$$
(see  \cite[Ch. 4]{Gibson}). If, by a change of variables, we can write $g(x,y)=y^2$, we say that $g$ is of type $A_{\infty}$. 

Assume that the second jet of $g$ is singular with the $x$-axis as kernel, which is equivalent to say that
$$
g(x,y)=\tfrac{1}{2}g_{02}y^2+O(3).
$$
The following criteria for $A_m$ singularity, $m=2,3,4$, is well-known:

\begin{Proposition}
Consider a germ $g$ at a singular point as above:
\begin{enumerate}
\item
$g$ is of type $A_2$ if and only if $g_{30}\neq 0$.

\item
Assuming $g$ is not $A_2$, $g$ is of type $A_3$ if and only if
\begin{equation}\label{eq:T3}
T_3=g_{40}g_{02}-3g_{21}^2\neq 0.
\end{equation}

\item
Assuming $g$ is not $A_3$, $g$ is of type $A_4$ if and only if 
\begin{equation}\label{eq:T4}
T_4=g_{50}g_{02}^2-10g_{31}g_{21}g_{02}+15g_{12}g_{21}^2\neq 0.
\end{equation}
\end{enumerate}
\end{Proposition}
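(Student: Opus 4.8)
The plan is to reduce the two-variable problem to a one-variable one via the Splitting Lemma (equivalently, the parametrized Morse Lemma), and then to read off the singularity type from the order of vanishing of the resulting residual function. Since the $2$-jet of $g$ is $\tfrac12 g_{02}y^2$ with $g_{02}\neq 0$, the Hessian of $g$ has rank one, with the $y$-direction nondegenerate. Hence the equation $g_y(x,y)=0$ has, by the Implicit Function Theorem, a unique solution $y=y(x)$ near the origin with $y(0)=0$, and the Splitting Lemma asserts that $g$ is right-equivalent to $\pm y^2+\phi(x)$, where $\phi(x):=g(x,y(x))$ is the restriction of $g$ to the critical curve. Since right equivalence preserves the singularity type, and a one-variable germ $\phi$ with $\phi(0)=\phi'(0)=0$ is of type $A_m$ precisely when its first nonvanishing derivative at the origin is of order $m+1$, it suffices to compute $\phi''(0),\phi'''(0),\phi^{(4)}(0),\phi^{(5)}(0)$ in terms of the $g_{ij}$ and to locate the first one that does not vanish.

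The computation proceeds by repeated differentiation of the two identities $\phi(x)=g(x,y(x))$ and $g_y(x,y(x))\equiv 0$. From $g_y\equiv0$ one gets $\phi'(x)=g_x(x,y(x))$, and differentiating $g_y(x,y(x))=0$ successively yields $y'(0)=0$ (because $g_{11}=0$), then $y''(0)=-g_{21}/g_{02}$, and $y'''(0)=-g_{31}/g_{02}+3g_{12}g_{21}/g_{02}^2$. Feeding these into the chain-rule expansions of $\phi^{(k)}(0)$, and systematically discarding every term carrying a factor $y'(0)=0$ or $g_{11}=0$, gives
\begin{equation*}
\phi''(0)=0,\quad \phi'''(0)=g_{30},\quad \phi^{(4)}(0)=\frac{T_3}{g_{02}},\quad \phi^{(5)}(0)=\frac{T_4}{g_{02}^2},
\end{equation*}
with $T_3,T_4$ as in \eqref{eq:T3} and \eqref{eq:T4}.

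The three statements then follow at once. Since $\phi''(0)=0$ automatically, $g$ is never Morse and is at least of type $A_2$; it is exactly $A_2$ if and only if $\phi'''(0)=g_{30}\neq0$, proving (1). If $g_{30}=0$ (the hypothesis that $g$ is not $A_2$), the first possibly nonzero derivative is $\phi^{(4)}(0)=T_3/g_{02}$, so $g$ is $A_3$ if and only if $T_3\neq0$, proving (2). If moreover $T_3=0$ (the hypothesis that $g$ is not $A_3$), the next derivative is $\phi^{(5)}(0)=T_4/g_{02}^2$, whence $g$ is $A_4$ if and only if $T_4\neq0$, proving (3). The factors $g_{02}$ in the denominators are harmless, being nonzero.

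The only laborious step, and the one where I expect the main obstacle to lie, is the evaluation of $\phi^{(5)}(0)$: the fourth derivative of $g_x(x,y(x))$ produces a long list of chain-rule terms, and one must carefully keep track of which survive after setting $y'(0)=0$ and $g_{11}=0$, and then substitute the explicit values of $y''(0)$ and $y'''(0)$. It is exactly the cancellations in this substitution that assemble the combination $g_{50}g_{02}^2-10g_{31}g_{21}g_{02}+15g_{12}g_{21}^2$; checking that the numerical coefficients come out as $-10$ and $+15$ (and not something else) is the delicate bookkeeping of the argument.
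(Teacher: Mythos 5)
Your proof is correct. Note that the paper itself offers no argument here: it states the criteria as ``well-known'' and cites \cite{Gibson}, so there is no proof of record to compare against. Your route --- splitting off the nondegenerate $y$-direction via the Implicit Function Theorem and reading the $A_m$ type from the order of vanishing of the residual function $\phi(x)=g(x,y(x))$ --- is the standard derivation of these recognition criteria, and I have checked the delicate step you flagged: with $y'(0)=0$ and $g_{11}=0$, the fourth derivative of $g_x(x,y(x))$ collapses to $g_{50}+6g_{31}y''(0)+4g_{21}y'''(0)+3g_{12}\bigl(y''(0)\bigr)^2$, and substituting $y''(0)=-g_{21}/g_{02}$ and $y'''(0)=-g_{31}/g_{02}+3g_{12}g_{21}/g_{02}^2$ does produce exactly $T_4/g_{02}^2$ with the coefficients $-10$ and $+15$. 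The only cosmetic caveat is that for odd $m$ the normal form $y^2+x^{m+1}$ versus $y^2-x^{m+1}$ distinguishes $A_m^{+}$ from $A_m^{-}$; your criterion (like the paper's) detects the type $A_m$ without resolving that sign, which is all that is claimed.
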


\subsection{General approach}
We shall consider in this section non-degenerate umbilical points with $m=1,2,3,4$. We shall describe the jet space in each case, calculate the $(m+1)$-jet of the discriminant function $\delta$ and prove that it is of type $A_m$. Moreover, we shall verify that at least one derivative of $R$ is non-zero, thus concluding that the umbilical point is a limit of regular points. 

In case $m=1$, Lemma \ref{Lemma:JetSpace1} with $n=2$, $k=0,2$ says that we can express $q_{30}$ and $q_{12}$ in terms of coefficients of $\xi^1$ up to order $3$ and $\xi^2$ up to order $2$. 
For $n=2$ and $k=1$,  the same lemma says that we can express $q_{03}$ in terms of coefficients of $\xi^1$ up to order $3$ and $\xi^2$ up to order $2$. 
In general, Lemma \ref{Lemma:JetSpace1} says that  all coefficients of $\xi^1$ are free and also $(q_{2,k})$, $k\geq 0$ are free. The other coefficients can be obtained as functions of them. 

In case $m=2$, Corollary \ref{Cor:JetSpace1} with $n=2$, $k=1,2$ says that we can express $q_{21}$ and $q_{12}$ in terms of coefficients of $\xi^1$ up to order $3$ and $\xi^2$ up to order $2$. 
Similarly, Corollary \ref{Cor:JetSpace1} with $n=3$, $k=0,2,3$ says that we can express $q_{40}$, $q_{22}$ and $q_{13}$ in terms of coefficients of $\xi^1$ up to order $4$ and $\xi^2$ up to order $3$. 
Moreover, Corollary \ref{Cor:JetSpace2} with $n=3$, $k=1$ says that we can express $p_{40}$ in terms of coefficients of $\xi^1$ up to order $3$ and $\xi^2$ up to order $3$.
In general, the coefficients $q_{3,k}$, $k\geq 0$ are free, together with the coefficients
$q_{0,k}$, $k\geq 2$ and $p_{l,k}$, $l\neq 4$. On the other hand, $q_{l,k}$, $l\neq 0,3$ and $p_{4,k}$, $k\geq 0$ are obtained as functions of the other coefficients. 

In case $m=3$, Corollary \ref{Cor:JetSpace1} with $n=2$, $k=0,1,2$ says that we can express $q_{30}$, $q_{21}$ and $q_{12}$ in terms of coefficients of $\xi^1$ up to order $3$ and $\xi^2$ up to order $2$. The same corollary with $n=3$, $k=1,2,3$, says that we can express $q_{31}$, $q_{22}$ and $q_{13}$ in terms of coefficients of $\xi^1$ up to order $4$ and $\xi^2$ up to order $3$. For $n=4$, $k=0,2,3,4$, we can express $q_{50}$, $q_{32}$, $q_{23}$ and $q_{14}$ in terms of coefficients of $\xi^1$ up to order $5$ and $\xi^2$ up to order $4$. The free coefficients are $q_{4,k}$.
Corollary \ref{Cor:JetSpace2} with $n=4$, $k=1$, says that we can express $p_{50}$ in terms of coefficients of $\xi^1$ up to order $4$ and $\xi^2$ up to order $4$. In general, the coefficients $p_{5k}$ are obtained as functions of the other coefficients.

In case $m=4$, Corollary \ref{Cor:JetSpace1} with $n=2$, $k=0,1,2$ says that we can express $q_{30}$, $q_{21}$ and $q_{12}$ in terms of coefficients of $\xi^1$ up to order $3$ and $\xi^2$ up to order $2$. The same corollary with $n=3$, $k=0,1,2,3$ says that we can express $q_{40}$, $q_{31}$, $q_{22}$ and $q_{13}$ in terms of coefficients of $\xi^1$ up to order $4$ and $\xi^2$ up to order $3$. For $n=4$, $k=1,2,3,4$, we can express $q_{41}$, $q_{32}$, $q_{23}$ and $q_{14}$ in terms of coefficients of $\xi^1$ up to order $5$ and $\xi^2$ up to order $4$. For $n=5$, $k=0,2,3,4,5$, we can express $q_{60}$, $q_{42}$, $q_{33}$, $q_{24}$ and $q_{15}$ in terms of coefficients of $\xi^1$ up to order $6$ and $\xi^2$ up to order $5$. The free coefficients are $q_{5,k}$.
Corollary \ref{Cor:JetSpace2} with $n=5$, $k=1$, says that we can express $p_{60}$ in terms of coefficients of $\xi^1$ up to order $5$ and $\xi^2$ up to order $5$. In general, the coefficients $p_{6k}$, $k\geq 0$, are obtained as functions of the other coefficients.

\subsection{Umbilical points of type $A_m$, $m=1,2,3,4$}

\subsubsection{Umbilical points of Morse type}

Assume that $m=1$. Since 
\begin{equation}\label{eq:W10}
W_{10}=2q_{20}p_{11}^2(1-m),
\end{equation}
is automatically zero, we have $q_{20}\neq 0$ for an open and dense subset of $\mathcal{W}_0$. 
The $2$-jet $\delta_2$ of the discriminant $\delta$ is given by 
\begin{equation*}
\delta_2(x,y)=4p_{11}\left(  p_{11}y^2+q_{20}x^2 \right).
\end{equation*}
Since $p_{11}q_{20}\neq 0$, the umbilical point is of Morse type. 
We have thus proved the following proposition:
\begin{Proposition}
Assume that $m=1$. Then, for an open and dense subset of $\mathcal{W}_0$, the umbilical point is Morse. 
\end{Proposition}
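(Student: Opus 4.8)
The plan is to recognize that the umbilical point is of Morse type (i.e. $A_1$) exactly when the discriminant $\delta$ has a non-degenerate critical point at the origin, and then to translate this analytic condition into a single non-vanishing condition on a free coefficient. Since $\delta$ has vanishing $0$- and $1$-jet at the origin — both $\xi^2_y-\xi^1_x$ and $4\xi^1_y\xi^2_x$ start at order one, so their squares and products start at order two — the origin is automatically a critical point and the $2$-jet $\delta_2$ is precisely its Hessian form. Thus Morse is equivalent to $\delta_2$ being a non-degenerate quadratic form, and the whole argument splits into computing $\delta_2$ and then showing that its non-degeneracy cuts out an open and dense subset of $\mathcal{W}_0$.

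First I would compute $\delta_2$. With $f=(x,y)$ we have $\delta=(a-d)^2+4bc=(\xi^2_y-\xi^1_x)^2+4\xi^1_y\xi^2_x$. Substituting the normalized $2$-jet \eqref{Conditions2jet} together with $p_{10}=q_{01}=0$ and the relation $q_{02}=-p_{11}$ coming from $m=1$, one reads off $\xi^1_x=p_{11}y+O(2)$, $\xi^1_y=p_{11}x+O(2)$, $\xi^2_x=q_{20}x+O(2)$ and $\xi^2_y=q_{02}y+O(2)=-p_{11}y+O(2)$. Collecting the quadratic terms gives $\delta_2(x,y)=4p_{11}\bigl(p_{11}y^2+q_{20}x^2\bigr)$, whose Hessian determinant is a nonzero multiple of $p_{11}^3q_{20}$. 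As $p_{11}\neq 0$ by \eqref{Conditions2jet}, the form $\delta_2$ is non-degenerate — hence the umbilical point is Morse — if and only if $q_{20}\neq 0$.

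It then remains to show that $\{q_{20}\neq 0\}$ is open and dense in $\mathcal{W}_0$, and the crucial point is that for $m=1$ the coefficient $q_{20}$ is free. The only equation among \eqref{eq:WDerivatives} that could a priori constrain $q_{20}$ is $W_{10}=0$, and formula \eqref{eq:W10}, namely $W_{10}=2q_{20}p_{11}^2(1-m)$, shows that this equation holds identically when $m=1$, imposing no condition whatsoever on $q_{20}$ (in agreement with the list of free coefficients for $m=1$ recorded in the general approach). Since $q_{20}$ is therefore a coordinate on the Euclidean space of free coefficients, the evaluation $\xi\mapsto q_{20}$ is continuous in the $C^{\omega}$-topology, so $\{q_{20}\neq 0\}$ is open; it is dense because any congruence with $q_{20}=0$ can be pushed off this coordinate hyperplane by an arbitrarily small change of the single free coefficient $q_{20}$, a change that leaves $p_{11}$, $q_{02}$ (hence $m=1$), the normalization \eqref{Conditions2jet} and the umbilical condition at the origin all untouched.

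The only genuine work — and the main potential obstacle — is the honest Taylor-coefficient bookkeeping certifying the two explicit identities $\delta_2=4p_{11}(p_{11}y^2+q_{20}x^2)$ and $W_{10}=2q_{20}p_{11}^2(1-m)$; once these are established the rest is a direct topological argument. A secondary point to confirm is that perturbing $q_{20}$ really keeps us inside the class under study, i.e. that it neither moves the umbilical point away from the origin nor violates the non-degeneracy hypotheses used to normalize the $2$-jet, but since every one of those conditions involves coefficients other than $q_{20}$, this is a formality. It is worth noting that this same computation explains why the Morse case is special to $m=1$: for $m\neq 1$ the equation $W_{10}=0$ forces $q_{20}=0$, the $x^2$-term of $\delta_2$ vanishes, and the discriminant acquires a degenerate $2$-jet with the $x$-axis as kernel — the starting point for the higher $A_m$ analysis.
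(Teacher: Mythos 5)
Your proposal is correct and follows essentially the same route as the paper: compute $\delta_2=4p_{11}\left(p_{11}y^2+q_{20}x^2\right)$, observe that $W_{10}=2q_{20}p_{11}^2(1-m)$ vanishes identically for $m=1$ so that $q_{20}$ is a free coefficient, and conclude that $q_{20}\neq 0$ on an open and dense subset of $\mathcal{W}_0$, which makes the discriminant Morse. Your additional care about why $\{q_{20}\neq 0\}$ is open and dense in the $C^{\omega}$-topology and why perturbing $q_{20}$ preserves the normalization is a welcome elaboration of what the paper leaves implicit, but it is not a different argument.
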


To be sure that the umbilical point is a limit of regular points, we have calculated $R_{22}$ at the origin, obtaining $R_{22}=-192p_{11}^4q_{20}^2\neq 0$.

\subsubsection{Umbilical points of type $A_2$}

Assume that $m=2$. 
In this case, Equation \eqref{eq:W10} implies that $q_{20}=0$, and since
\begin{equation}\label{eq:W20}
W_{20}=2p_{11}q_{30}(2p_{11}+q_{02})=2(2-m)p_{11}^2q_{30}.
\end{equation}
is automatically zero, we have that $q_{30}\neq 0$ for an open and dense set in $\mathcal{W}_0$.  
The second jet of $\delta$ is given by
\begin{equation}\label{eq:delta02}
\delta_2(x,y)=9p_{11}^2y^2.
\end{equation}
We have also that
\begin{equation*}\label{eq:delta30}
\delta_{30}=12p_{11}q_{30},
\end{equation*}
and so we obtain that the singularity of the discriminant is of type $A_2$. We have thus proved the following proposition:

\begin{Proposition}
Assume that $m=2$. Then, for an open and dense subset of $\mathcal{W}_0$, the umbilical point is of type $A_2$. 
\end{Proposition}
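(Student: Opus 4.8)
The plan is to reduce the $A_2$ classification of the umbilical point to a single open and dense condition on a free Taylor coefficient, namely $q_{30}\neq 0$, and then to invoke the $A_2$ criterion for the discriminant function $\delta$. First I would record the constraints that $m=2$ imposes on the low-order jet. Since the origin is umbilical we may normalize by \eqref{Conditions2jet}, so that $p_{02}=p_{20}=q_{11}=0$, $p_{11}\neq 0$, and $q_{02}=-2p_{11}$. The Weingarten equation $W_{10}=0$, combined with formula \eqref{eq:W10}, $W_{10}=2q_{20}p_{11}^2(1-m)$, and $m=2\neq 1$, forces $q_{20}=0$. The crucial observation is the next one: evaluating \eqref{eq:W20} at $m=2$ gives $W_{20}=2(2-m)p_{11}^2q_{30}=0$ identically, because $2p_{11}+q_{02}=0$. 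Hence the Weingarten condition $W_{20}=0$ is vacuous and imposes no relation on $q_{30}$, so $q_{30}$ is one of the free coefficients in the jet description of the case $m=2$.

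Next I would extract the $A_2$ data from $\delta=(a-d)^2+4bc$. Substituting $a=-\xi^1_x$, $b=-\xi^1_y$, $c=-\xi^2_x$, $d=-\xi^2_y$ together with the normalized $2$-jet, a direct computation yields the $2$-jet $\delta_2(x,y)=9p_{11}^2y^2$ of \eqref{eq:delta02}. In particular $\delta$ has a singular second jet whose kernel is exactly the $x$-axis, so the criterion Proposition for $A_m$ singularities applies, with the role of $g_{02}$ played by $\delta_{02}=18p_{11}^2\neq 0$. Carrying the Taylor expansion of $\delta$ one order further, the relevant cubic coefficient is $\delta_{30}=12p_{11}q_{30}$. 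By part (1) of that Proposition, $\delta$ is of type $A_2$ precisely when $\delta_{30}\neq 0$, i.e. when $q_{30}\neq 0$.

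Finally I would establish genericity. Because $q_{30}$ is a free coefficient, the lemma of Section 3.1 guarantees that, in the $C^{\omega}$-topology of $\mathcal{W}_0$, open sets in the Euclidean space of free coefficients of bounded order form a basis; hence $\{q_{30}\neq 0\}$ is open, and it is dense since any congruence can be perturbed by changing $q_{30}$ slightly, its complement $\{q_{30}=0\}$ being a proper hyperplane in that coordinate. On this open and dense set $\delta_{30}=12p_{11}q_{30}\neq 0$, so the umbilical point is of type $A_2$. As a side check that the point is a limit of regular points, one computes some derivative of $R$ at the origin and verifies it is nonzero, as in the Morse case. The main obstacle I expect is not the topology but the algebra: confirming the identity $W_{20}=2(2-m)p_{11}^2q_{30}$, so that the $W_{m0}=0$ equation is genuinely automatic at $m=2$ and does not constrain $q_{30}$, together with the bookkeeping needed to isolate the $y^2$ and $x^3$ terms of $\delta$ from the many jet coefficients contributing to them.
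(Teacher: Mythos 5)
Your proposal is correct and follows essentially the same route as the paper: use $W_{10}=0$ to force $q_{20}=0$, observe that $W_{20}=2(2-m)p_{11}^2q_{30}$ vanishes identically at $m=2$ so that $q_{30}$ remains free, compute $\delta_2=9p_{11}^2y^2$ and $\delta_{30}=12p_{11}q_{30}$, and conclude type $A_2$ on the open dense set $\{q_{30}\neq 0\}$. The only addition in the paper is the explicit value $R_{42}=-3456p_{11}^4q_{30}^2\neq 0$ confirming the umbilical point is a limit of regular points, which you correctly flag as a side check.
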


To be sure that the umbilical point is a limit of regular points, we have calculated $R_{42}$ at the origin, obtaining $R_{42}=-3456p_{11}^4q_{30}^2\neq 0$.

\subsubsection{Umbilical points of type $A_3$}

Assume that $m=3$. From Equation \eqref{eq:W20}, we obtain that necessarily $q_{30}=0$.  
Since
\begin{equation}\label{eq:W30}
W_{30}=-\frac{p_{11}(m-3)\left(4p_{11}q_{40}(m-1)+3mp_{30}^2(m+2) \right)}{2(m-1)}
\end{equation}
is automatically zero, the condition
\begin{equation}\label{eq:q40}
8p_{11}q_{40}+45p_{30}^2 =0,
\end{equation}
does not hold for an open and dense set in $\mathcal{W}_0$. Considering the equations $W_{11}=W_{02}=0$, we obtain that
$q_{21}=-3p_{30}$ and $q_{12}=-4p_{21}$.

For the discriminant function $\delta$, 
straightforward calculations show that
$$
T_3=-\tfrac{64}{3} p_{11}^2 \left(45p_{30}^2+8p_{11}q_{40}\right),
$$
where $T_3$ is given by Equation  \eqref{eq:T3}.
Since Equation \eqref{eq:q40} does not hold, we conclude that the singularity of the discriminant is of type $A_3$.
We have thus proved the following proposition:

\begin{Proposition}
Assume that $m=3$. Then, for an open and dense subset of $\mathcal{W}_0$, the umbilical point is of type $A_3$. 
\end{Proposition}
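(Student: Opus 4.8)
The plan is to follow the same three-stage scheme already used for $m=1$ and $m=2$: first resolve the jet space at the umbilical point, then expand the discriminant $\delta=(a-d)^2+4bc$ to the order needed and apply the $A_3$ criterion of the preceding Proposition, and finally check that the relevant genericity condition is open and dense and that the point is a limit of regular points. Throughout I would work under the normalization \eqref{Conditions2jet}, so that $m=-q_{02}/p_{11}=3$.

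For the jet space, I would start from the low-order Weingarten equations. Since $m=3\neq 1$, Equation \eqref{eq:W10} forces $q_{20}=0$, and since $m\neq 2$, Equation \eqref{eq:W20} forces $q_{30}=0$; in particular $(1,0)$ becomes an asymptotic direction for both quadratic forms, so the hessian of $\delta$ has the $x$-axis as kernel, which is exactly the hypothesis required by the $A_3$ criterion. Next I would invoke Corollary \ref{Cor:JetSpace1} with $n=2$ and $k=1,2$ (here $n-k=1,0\neq m$) to solve $W_{11}=W_{02}=0$ for $q_{21}$ and $q_{12}$, obtaining $q_{21}=-3p_{30}$ and $q_{12}=-4p_{21}$, and I would similarly apply Corollaries \ref{Cor:JetSpace1} and \ref{Cor:JetSpace2} to resolve the remaining determined coefficients up to the order needed for the $4$-jet of $\delta$, leaving the $q_{4,k}$ and the coefficients of $\xi^1$ free.

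The conceptual crux is the exceptional equation $W_{30}=0$, which corresponds to the $m=3$ instance of the conjecture \eqref{Conjecture1}. Evaluating \eqref{eq:W30} at $m=3$, the factor $(m-3)$ vanishes, so $W_{30}=0$ holds identically and imposes no relation among the coefficients; consequently the quantity $45p_{30}^2+8p_{11}q_{40}$ of \eqref{eq:q40} is not forced to vanish and $q_{40}$ remains a free coefficient. With the jet space so described, I would expand $\delta$ to order four. The $2$-jet is $\delta_2=(1+m)^2p_{11}^2y^2=16p_{11}^2y^2$, a nonzero multiple of $y^2$; the vanishing $q_{30}=0$ gives $\delta_{30}=0$, so $\delta$ is not of type $A_2$. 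A direct computation of $\delta_{40},\delta_{21},\delta_{02}$ then yields $T_3=-\tfrac{64}{3}p_{11}^2\bigl(45p_{30}^2+8p_{11}q_{40}\bigr)$, and the criterion \eqref{eq:T3} identifies the singularity as $A_3$ precisely when this factor is nonzero.

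Finally, since $45p_{30}^2+8p_{11}q_{40}\neq 0$ defines the complement of a proper algebraic subset in the finitely many free coefficients $p_{11},p_{30},q_{40}$, and $\mathcal{W}_0$ carries the topology generated by the free coefficients, the locus where $T_3\neq 0$ is open and dense; combined with the non-degeneracy conditions $\Omega_i\neq 0$, $J_i\neq 0$ this produces an open dense set of $W$-congruences with an $A_3$ umbilical point. To confirm that the point is a limit of regular points I would compute an appropriate derivative of $R$, as was done with $R_{22}$ and $R_{42}$ in the previous cases, and check that it is generically nonzero. I expect the main obstacle to be the bookkeeping in the last computational step: substituting all the determined coefficients through the jet relations into $\delta$ and extracting its order-four Taylor data to reach the stated form of $T_3$, together with the verification that the formula \eqref{eq:W30} genuinely carries the factor $(m-3)$, since it is this factorization alone that leaves $q_{40}$ free and makes the $A_3$ stratum nonempty.
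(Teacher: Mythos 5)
Your proposal is correct and follows essentially the same route as the paper: deduce $q_{30}=0$ from $W_{20}=0$, observe that $W_{30}$ carries the factor $(m-3)$ and is therefore automatically zero (leaving $q_{40}$ free and $45p_{30}^2+8p_{11}q_{40}$ generically nonzero), solve $W_{11}=W_{02}=0$ for $q_{21}=-3p_{30}$ and $q_{12}=-4p_{21}$, and then evaluate $T_3$ as a nonzero multiple of $45p_{30}^2+8p_{11}q_{40}$ to conclude type $A_3$ on an open dense subset of $\mathcal{W}_0$. The only additions the paper makes are the explicit constant in $T_3$ and the verification $R_{24}=-138240\,p_{11}^4p_{30}^2\neq 0$, both of which you correctly anticipate as the remaining bookkeeping.
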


To be sure that the umbilical point is a limit of regular points, we have calculated $R_{24}$ at the origin, obtaining
$R_{24}=-138240p_{11}^4p_{30}^2\neq 0$. 

\subsubsection{Umbilical points of type $A_4$}

Assume that $m=4$. 
The condition $W_{40}=0$ can be written in the form $W_{40}=(m-4)B$, where
$$
B=(5m^3+3m^2-14m)p_{11}p_{30}p_{40}-(5m^3+12m^2-4m)p_{21}p_{30}^2+(2m^2-6m+4)p_{11}^2q_{50}.
$$
Thus we obtain that the equation
\begin{equation}\label{eq:T4m4}
p_{11}^2q_{50}+26p_{11}p_{30}p_{40}-44p_{21}p_{30}^2=0
\end{equation}
does not hold for an open and dense set in $\mathcal{W}_0$. 

Considering the equations $W_{20}=W_{30}=0$ we obtain that $q_{30}=0$ and $q_{40}=-\tfrac{6p_{30}^2}{p_{11}}$.
From equations $W_{11}=W_{02}=0$, we obtain that
$q_{21}=-\tfrac{10}{3}p_{30}$ and $q_{12}=-5p_{21}$. Finally From equations $W_{30}=W_{12}=W_{21}=W_{03}=0$ we obtain that
$$
q_{31}=-\frac{10p_{11}p_{40}-9p_{21}p_{30}}{2p_{11}},
$$
$$
q_{22}=-\frac{60p_{11}p_{31}-84p_{12}p_{30}-18p_{21}^2-p_{30}q_{03}}{9p_{11}},
$$
$$
q_{13}=\frac{20p_{03}p_{30}-15p_{11}p_{22}+27p_{12}p_{21}+3p_{21}q_{03}}{2p_{11}}.
$$

For the discriminant function $\delta$, 
straightforward calculations show that
$$
T_4=-50000p_{11}^3(p_{11}^2q_{50}+26p_{11}p_{30}p_{40}-44p_{21}p_{30}^2),
$$
where $T_4$ is given by Equation  \eqref{eq:T4}.
Since Equation \eqref{eq:T4m4} does not hold, we conclude that the singularity is $A_4$. 
We have thus proved the following proposition:
\begin{Proposition}
Assume that $m=4$. Then, for an open and dense subset of $\mathcal{W}_0$, the umbilical point is of type $A_4$. 
\end{Proposition}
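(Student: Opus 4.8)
The plan is to run the same scheme that worked for $m=1,2,3$: fix the normal form of the jet, read off the constrained jet space from Corollaries \ref{Cor:JetSpace1} and \ref{Cor:JetSpace2}, compute enough of the jet of the discriminant $\delta=(a-d)^2+4bc$ to apply the $A_m$ criterion recalled above with $g=\delta$, and reduce the $A_4$ test to the nonvanishing of the polynomial appearing in \eqref{eq:T4m4}.

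First I would impose \eqref{Conditions2jet} together with $p_{10}=q_{01}=0$, so that $a=d=b=c=0$ at the origin and $q_{02}=-mp_{11}=-4p_{11}$. Since $m=4\neq 1$ the lemma forces $q_{20}=0$, so the contribution $4bc$ vanishes to second order and the second jet reduces to $\delta_2=p_{11}^2(m+1)^2y^2=25p_{11}^2y^2$. Hence $\delta$ has a singular $2$-jet with the $x$-axis as kernel, and the criterion applies with $\delta_{02}=50p_{11}^2\neq 0$. Solving the jet relations listed for $m=4$ expresses $q_{30},q_{40},q_{21},q_{12}$ and then $q_{31},q_{22},q_{13}$ in terms of the free coefficients of $\xi^1$ and the free coefficients $q_{5,k}$; in particular $q_{30}=0$, $q_{40}=-6p_{30}^2/p_{11}$, $q_{21}=-\tfrac{10}{3}p_{30}$, $q_{12}=-5p_{21}$.

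With these in hand I would confirm that we are in the $A_4$ branch. A short Leibniz computation using $q_{30}=0$ gives $\delta_{30}=0$, so $\delta$ is not $A_2$; substituting the relations above into $\delta_{40}$ and $\delta_{21}$ yields $\delta_{40}=\tfrac{50}{3}p_{30}^2$ and $\delta_{21}=\tfrac{50}{3}p_{11}p_{30}$, whence $T_3=\delta_{40}\delta_{02}-3\delta_{21}^2$ vanishes identically and $\delta$ is not $A_3$. The decisive structural point is the identity $W_{40}=(m-4)B$: at $m=4$ it vanishes, so \eqref{eq:T4m4} is \emph{not} imposed as a jet relation, and the combination $p_{11}^2q_{50}+26p_{11}p_{30}p_{40}-44p_{21}p_{30}^2$ remains genuinely free.

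The main obstacle is the computation of $T_4$ itself, since $T_4=\delta_{50}\delta_{02}^2-10\delta_{31}\delta_{21}\delta_{02}+15\delta_{12}\delta_{21}^2$ requires the full fifth-order jet of $\delta$, hence fifth derivatives of $\xi^1,\xi^2$ and the substitution of all the $q_{ij}$ above. After simplification I expect this to collapse to
\begin{equation*}
T_4=-50000\,p_{11}^3\left(p_{11}^2q_{50}+26p_{11}p_{30}p_{40}-44p_{21}p_{30}^2\right),
\end{equation*}
so that $T_4\neq 0$ is exactly the failure of \eqref{eq:T4m4}. Finally, since $q_{50}$ is a free coefficient that enters this polynomial with the nonzero coefficient $p_{11}^2$, the locus where \eqref{eq:T4m4} fails is the complement of the zero set of a nonzero polynomial in the free coordinates of $\mathcal{W}_0$: it is open in the $C^{\omega}$-topology and dense, because any congruence can be moved off that zero set by an arbitrarily small change of $q_{50}$. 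On this open dense set $\delta$ is of type $A_4$, which proves the proposition.
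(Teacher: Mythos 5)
Your proposal follows essentially the same route as the paper: impose the normal form \eqref{Conditions2jet}, use the jet relations to get $q_{30}=0$, $q_{40}=-6p_{30}^2/p_{11}$, $q_{21}=-\tfrac{10}{3}p_{30}$, $q_{12}=-5p_{21}$, observe that $W_{40}=(m-4)B$ vanishes identically at $m=4$ so that the combination in \eqref{eq:T4m4} stays free, and reduce the $A_4$ criterion to $T_4=-50000\,p_{11}^3\bigl(p_{11}^2q_{50}+26p_{11}p_{30}p_{40}-44p_{21}p_{30}^2\bigr)\neq 0$, which holds on an open dense set since $q_{50}$ is a free coefficient. Your intermediate checks ($\delta_2=25p_{11}^2y^2$, $\delta_{30}=0$, $T_3\equiv 0$) are correct and make explicit what the paper leaves to ``straightforward calculations''; the argument is sound.
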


To be sure that the umbilical point is a limit of regular points, we have calculated $R_{24}$ at the origin, obtaining
$R_{24}=-320000p_{11}^4p_{30}^2\neq 0$.

\subsection{Some configurations of principal lines}

In this subsection, we describe the generic configuration of principal lines in cases $m=1$ and $m=2$.  The case $m=3$ is more complicated and we did not include in the paper. We could not find a pattern for the configurations, and we include these two cases just as examples. The blowing-up approach can be found in \cite{Davydov}. 

\subsubsection{Principal lines at Morse umbilical points}

The differential equation of the principal lines is given by 
\begin{equation}\label{EDBA1}
(p_{11}x+O(2))dy^2 + (2p_{11}y+O(2))dxdy -(q_{20}x+O(2))dx^2=0,
\end{equation}

\begin{Proposition}\label{prop:EDBA1}
Consider the differential equation \eqref{EDBA1}, with $q_{02}p_{11}>0$, case $A_1^{+}$, or $q_{02}p_{11}<0$, case $A_1^{-}$. Its solutions are as shown in Figure \ref{fig:UmbilicA1}. 
\end{Proposition}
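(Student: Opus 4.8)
The plan is to analyze the binary differential equation \eqref{EDBA1} by the Lie--Cartan lifting (blowing-up) technique, following \cite{Davydov}. Setting $p=dy/dx$, the equation is the zero locus of
$$
F(x,y,p)=(p_{11}x+O(2))\,p^2+(2p_{11}y+O(2))\,p-(q_{20}x+O(2)),
$$
and I would work on the surface $M=F^{-1}(0)$ in $(x,y,p)$-space, where $p$ ranges over the fiber $\mathbb{RP}^1$, together with the Lie--Cartan vector field
$$
X=F_p\,\partial_x+pF_p\,\partial_y-(F_x+pF_y)\,\partial_p,
$$
whose integral curves project to the solutions of \eqref{EDBA1}. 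Since every coefficient of $F$ vanishes at the origin, the whole exceptional fiber $\{x=y=0\}$ is contained in $M$ and is invariant under $X$. The configuration of principal lines near the umbilic is then recovered by drawing the phase portrait of $X$ in a neighbourhood of this fiber and projecting it down to the plane.

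First I would locate the equilibria of $X$ on the exceptional fiber. Restricting to $x=y=0$ gives $F_p\equiv 0$ and $F_x+pF_y=3p_{11}p^2-q_{20}$, so the equilibria are the roots of $3p_{11}p^2=q_{20}$ (one must also check the complementary chart of $\mathbb{RP}^1$ to be sure no root is missed at infinity). This is exactly where the sign hypothesis enters, through the definiteness of the second jet $\delta_2=4p_{11}(p_{11}y^2+q_{20}x^2)$ of the discriminant: in the case $A_1^{+}$ ($\delta_2$ definite, $p_{11}q_{20}>0$) there are two equilibria $p_\pm=\pm\sqrt{q_{20}/3p_{11}}$, whereas in the case $A_1^{-}$ ($\delta_2$ indefinite, $p_{11}q_{20}<0$) the fiber carries no equilibrium and the discriminant curve $\delta=0$ is a genuine pair of lines through the origin that must be taken into account.

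Next, at each fiber equilibrium I would linearise the restriction of $X$ to $T_{(0,0,p_\pm)}M=\ker dF$. A direct computation shows that $\partial_p\in\ker dF$ and that the second tangent direction also lies in $\ker dF$, so that the two eigenvalues of $X|_M$ are $4p_{11}p_\pm$ and $-6p_{11}p_\pm$, while the remaining eigenvalue $0$ of the ambient $3\times 3$ Jacobian corresponds to a direction transverse to $M$. Since the eigenvalue ratio $-3/2$ is negative, every fiber equilibrium is a hyperbolic saddle. Hyperbolicity is the decisive point: it guarantees that the $O(2)$ terms in $F$ do not change the local phase portrait of $X$, so that in the case $A_1^{+}$ the picture is determined entirely by the $1$-jet in \eqref{EDBA1}. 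Reading off the stable and unstable separatrices of the two saddles and blowing down yields the rectilinear separatrices of the principal net and the sectors between them, reproducing the corresponding picture of Figure \ref{fig:UmbilicA1}.

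The main obstacle I anticipate is the case $A_1^{-}$. There $M$ is a double cover of the plane branched along the two discriminant lines, so the relevant singularities are no longer equilibria on the fiber but \emph{folded singularities} located on the criminant $\{F=F_p=0\}$. I would handle this by analysing $X$ near the fold set, computing the quantity that distinguishes a folded saddle from a folded node or focus, and checking that the resulting singularities are again hyperbolic so that the higher-order terms remain irrelevant. Once the eigenvalue signs and the folded type are established, the remaining bookkeeping---matching the number of separatrices and the sector structure against the two cases---is routine and gives precisely the configurations displayed in Figure \ref{fig:UmbilicA1}.
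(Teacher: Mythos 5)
Your Lie--Cartan lifting is a legitimate alternative to the paper's planar blow-ups, and the part you carry out is correct: the equilibria at $p_{\pm}=\pm\sqrt{q_{20}/3p_{11}}$ with eigenvalues $4p_{11}p_{\pm}$ and $-6p_{11}p_{\pm}$ on $M$ match the paper's computation $\det DX(0,v_i)=-24p_{11}^2v_i^2<0$, so those two points are indeed saddles. The genuine gap is that you miss a third singular point of the lifted foliation, at the vertical direction $p=\infty$. The equilibrium condition you use, $F_x+pF_y=3p_{11}p^2-q_{20}=0$, is the dehomogenization of the cubic of singular directions $\Phi(x,y)=p_{11}x\,y^2+2p_{11}y\,xy-q_{20}x\,x^2=x\left(3p_{11}y^2-q_{20}x^2\right)$; the factor $x$ contributes a root at the vertical direction that is invisible in the chart $p=dy/dx$. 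In the complementary chart $q=dx/dy$ one has $G=p_{11}x+2p_{11}yq-q_{20}xq^2+O(2)$ and $qG_x+G_y=q\left(3p_{11}-q_{20}q^2\right)$, which vanishes at $q=0$, and the same linearization you performed gives eigenvalues $2p_{11}$ and $-3p_{11}$ there: a third hyperbolic saddle, present in \emph{both} cases $A_1^{\pm}$. This is exactly what the paper's second blow-up $H_1(u,v)=(uv,v)$ captures. You do flag that the chart at infinity must be checked, but you then assert that in case $A_1^{-}$ the fiber carries no equilibrium, which is false, and in case $A_1^{+}$ you read the portrait off only two saddles instead of three; with the wrong count of singular directions the separatrix and sector structure obtained by blowing down does not reproduce Figure \ref{fig:UmbilicA1}.

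Relatedly, your plan for case $A_1^{-}$ via folded singularities is misdirected. There the discriminant $\delta=0$ is a pair of transversal branches through the origin; away from the origin these are ordinary fold points of the projection $M\to\mathbb{R}^2$, and generically the criminant carries no zeros of the Lie--Cartan field in a punctured neighbourhood, so there is no folded saddle/node/focus to classify. The actual content of case $A_1^{-}$ is the hyperbolic saddle at $p=\infty$ described above together with the standard cusping of solutions along the two discriminant branches. Once that saddle is included and its separatrices are blown down, the configuration of Figure \ref{fig:UmbilicA1} follows; without it the argument does not close.
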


\begin{figure}[htb]
\centering
\includegraphics[width=0.6\linewidth]{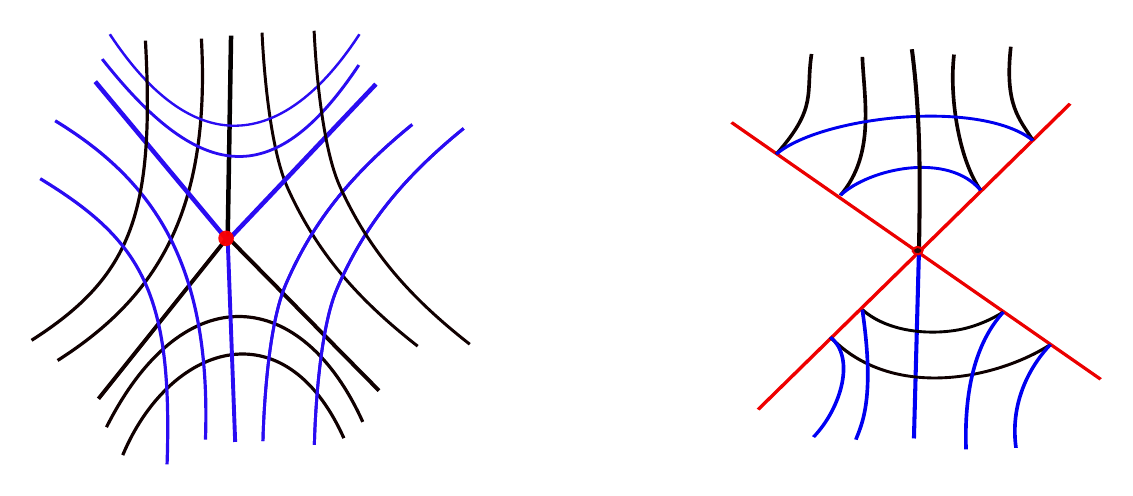}
 \caption{Left: Umbilical point with discriminant $A_1^{+}$, \ Right: Umbilical points with discriminant $A_1^{-}$}
\label{fig:UmbilicA1}
\end{figure}

\begin{proof}
Consider the blowing-up 
$$
H(u,v)=(u,uv), \ \ -\epsilon\leq u\leq \epsilon, \ \ -R\leq v\leq R.
$$
The pullback of the BDE can be written as
$$
\left(p_{11}u^2+O(3)\right)dv^2+\left(4p_{11}uv+O(2)\right)dudv+\left(3p_{11}v^2-q_{20}+O(1)\right)du^2=0,
$$
where here $O(k)$ means order $\geq k$ in $u$. 
In case $A_1^{+}$, the singular points of this BDE are 
$(u_1,v_1)=\left(0, \sqrt{\tfrac{q_{20}}{3p_{11}}}  \right)$ and  $(u_2,v_2)=\left(0, -\sqrt{\tfrac{q_{20}}{3p_{11}}}  \right)$.
In case $A_1^{-}$, the BDE has no singular points. Considering only the first order terms in $u$, the BDE can be factored as
$
\left(4p_{11}uvdv+(3p_{11}v^2-q_{20}+\alpha u)du\right)du=0,
$
where $\alpha$ is some real number that will not be relevant in our analysis. 
The integral lines of the first factor correspond to the integral lines of the vector field
$X=\left( 4p_{11}uv, -3p_{11}v^2+q_{20}-\alpha u \right).$ 
The determinant of $DX(0,v_i)=-24p_{11}^2v^2<0$, and thus both points are saddles. 

To analyze the behavior of the principal lines close to the $x$-axis,  consider the blowing-up $H_1(u,v)=(uv,v)$. Using a similar analysis as above, it follows that $(0,0)$ is a hyperbolic saddle. Taking these results into account, the proposition is proved.
\end{proof}

\subsubsection{Principal lines at $A_2$-umbilical points}

The differential equation of the principal lines is given by 
\begin{equation}\label{EDBA2}
(p_{11}x+O(2))dy^2 + (3p_{11}y+O(2))dxdy -\tfrac{1}{2}(q_{30}x^2+2q_{21}xy+q_{12}y^2+O(3))dx^2=0,
\end{equation}
where $q_{21}=-3p_{30}$, $q_{12}=-3p_{21}$. 

\begin{Proposition}\label{prop:EDBA2}
Consider the differential equation \eqref{EDBA2}, with $q_{30}p_{11}\neq 0$. Its solutions are as shown in Figure \ref{fig:UmbilicA2}. 
\end{Proposition}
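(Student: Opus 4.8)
The plan is to follow the blow-up strategy used for the Morse case in Proposition~\ref{prop:EDBA1}, reducing the binary differential equation \eqref{EDBA2} to a factored vector field on a blown-up plane, classifying the singular points by their linear parts, and blowing down to recover the configuration of Figure~\ref{fig:UmbilicA2}; the conceptual framework is the resolution of implicit differential equations of \cite{Davydov}. First I would apply the blow-up $H(u,v)=(u,uv)$, substituting $x=u$, $y=uv$, $dx=du$, $dy=u\,dv+v\,du$ into \eqref{EDBA2} and dividing the pulled-back equation by the common factor $u$. Keeping leading terms, this yields
\[
p_{11}u^2\,dv^2+5p_{11}uv\,du\,dv+\Bigl(4p_{11}v^2-\tfrac12 q_{30}u\Bigr)du^2=0,
\]
whose discriminant is proportional to $u^2\bigl(9p_{11}^2v^2+2p_{11}q_{30}u\bigr)$. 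The crucial difference with the Morse case is already visible: the strict transform of the discriminant is the smooth parabola $u=-\tfrac{9p_{11}}{2q_{30}}v^2$, which is \emph{tangent} to the exceptional divisor $u=0$ at the origin, so a single blow-up leaves one degenerate singular point at $(u,v)=(0,0)$ instead of the two hyperbolic saddles found in the $A_1$ case.

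Dropping the higher-order $dv^2$ term exactly as in the proof of Proposition~\ref{prop:EDBA1}, the equation factors as $du\cdot\bigl(5p_{11}uv\,dv+(4p_{11}v^2-\tfrac12 q_{30}u)du\bigr)=0$, so the nontrivial foliation is the integral foliation of the vector field $X=\bigl(5p_{11}uv,\ \tfrac12 q_{30}u-4p_{11}v^2\bigr)$. Its only singular point is the origin, where the linear part is the nilpotent matrix whose single nonzero entry is $\tfrac12 q_{30}$; hence $X$ has a degenerate singularity that the first blow-up cannot classify. The next step is to resolve this nilpotent point. After exchanging the roles of $u$ and $v$ to put $X$ in the standard nilpotent form $\dot u=\tfrac12 q_{30}v-4p_{11}u^2$, $\dot v=5p_{11}uv$, I would blow up once more at the origin, or equivalently invoke the classical analysis of nilpotent singular points (Andreev): restricting $\dot v$ to the curve $\tfrac12 q_{30}v=4p_{11}u^2$ gives a leading term proportional to $u^3$, so the sectorial structure of the resolved point is determined by this cubic together with the first-order term $-3p_{11}u$ coming from the trace. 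I would then use the complementary chart $H_1(u,v)=(uv,v)$, exactly as in Proposition~\ref{prop:EDBA1}, to describe the behavior of the principal lines along the $x$-axis.

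The main obstacle is precisely this degeneracy: because the $A_2$ discriminant is a cusp tangent to the exceptional divisor, one blow-up does not separate it, and the associated field carries a nilpotent rather than a hyperbolic singularity. The real work therefore lies in carrying out the iterated blow-up cleanly, keeping track of which directions on each exceptional divisor correspond to genuine principal directions, and in verifying that the resulting local phase portrait is independent of the sign of $q_{30}p_{11}$ up to the reflection $x\mapsto -x$ (which reverses the cusp and is already built into the picture), so that the single configuration of Figure~\ref{fig:UmbilicA2} describes all admissible cases. Once every singular point obtained after resolution is hyperbolic and classified, gluing the local pictures and blowing down yields the stated configuration.
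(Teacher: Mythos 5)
Your route is genuinely different from the paper's. The paper does not use the homogeneous blow-up $H(u,v)=(u,uv)$ as its main tool: it uses the quasi-homogeneous (cuspidal) charts $H_p(u,v)=(u^2,u^3v)$ and $H_n(u,v)=(-u^2,u^3v)$, adapted to the $(2,3)$-weights of the $A_2$ discriminant, so that each horizontal line $v=v_0$ downstairs is a cusp. After pulling back \eqref{EDBA2} by $H_n$ and dividing by $u^6$, the leading coefficient of $du^2$ is $27p_{11}v^2+2q_{30}$, whose zeros $v=\pm\sqrt{-2q_{30}/(27p_{11})}$ are immediately \emph{hyperbolic} saddles of the associated field (determinant $-648p_{11}^2v^2<0$), while the chart $H_p$ carries no singular points and the chart $H_1(u,v)=(uv,v)$ gives a saddle along the $x$-axis. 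The weighted blow-up thus buys exactly what your homogeneous blow-up loses: all singularities of the lifted field are hyperbolic after one step. Your computation of the first blow-up is correct as far as it goes (the pulled-back coefficients, the discriminant $u^2(9p_{11}^2v^2+2p_{11}q_{30}u)$, and the tangency of the strict transform with the exceptional divisor are all right), and your two saddles would reappear as the resolution of your nilpotent point; so the two strategies are compatible.

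However, as written your argument has a genuine gap: it stops precisely where the content of the proposition begins. You correctly diagnose that the origin of $X=\bigl(5p_{11}uv,\ \tfrac12 q_{30}u-4p_{11}v^2\bigr)$ is nilpotent, but you do not resolve it. Invoking ``Andreev'' and ``a second blow-up'' is not a proof: the reduction to the standard nilpotent normal form requires more than exchanging $u$ and $v$ (your $\dot u$-equation retains the term $-4p_{11}u^2$, which must be removed by a further change of variables before the leading restriction $\dot v\sim u^3$ and the divergence term $-3p_{11}u$ can be fed into the classification), the type of the resolved point depends on the signs and relative sizes of the \emph{normalized} coefficients, which you never compute, and you do not determine how the separatrices of the resolved point sit relative to the exceptional divisor and to the strict transform of the cuspidal discriminant, nor do you blow down to assemble the portrait. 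The claim that the final configuration is independent of the sign of $p_{11}q_{30}$ up to $x\mapsto-x$ is also asserted rather than checked (the paper fixes this sign w.l.o.g.\ at the outset and then sees that the saddles live on the side of the $x$-axis opposite to the cusp). Until the nilpotent point is actually classified and the local pictures are glued, the conclusion that the solutions are as in Figure \ref{fig:UmbilicA2} is not established.
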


\begin{figure}[htb]
\centering
\includegraphics[width=0.4\linewidth]{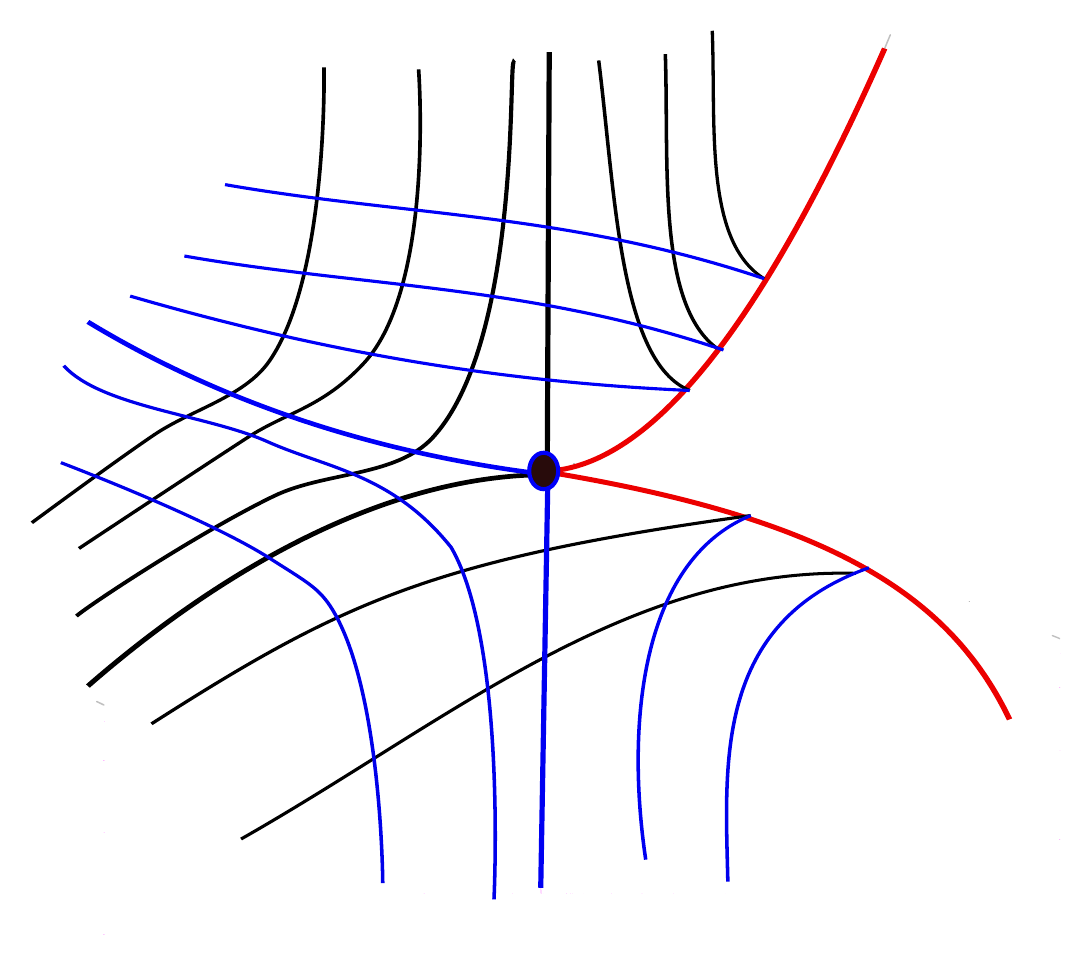}
 \caption{Umbilical point with discriminant having an $A_2$-singularity.}
\label{fig:UmbilicA2}
\end{figure}

\begin{proof}
We shall assume, w.l.o.g., that $p_{11}q_{03}<0$, so that the cuspidal discriminant is contained in the region $x>0$. Consider the blowing-up 
$$
H_p(u,v)=(u^2,u^3v),\ \ \ H_n(u,v)=(-u^2,u^3v), \ \ 0\leq u\leq \epsilon, \ \ -R\leq v\leq R.
$$
Note that each horizontal segment $v=v_0$ in the $(u,v)$-plane correspond to a cusp in the $(x,y)$-plane. 
We shall use $H_p$ to analyze the BDE in the region $x>0$ and $H_n$ to analyze the BDE in the region $x<0$.
The pullback of the BDE by $H_n$ can be written as
$$
\left(p_{11}u^2+O(4)\right)dv^2+\left(12p_{11}uv+O(2)\right)dudv+\left(27p_{11}v^2+2q_{30}+O(1)\right)du^2=0
$$
where here $O(k)$ denote terms of order $\geq k$ in $u$. 
The singular points of this BDE are 
$(u_1,v_1)=\left(0, \sqrt{-\tfrac{2q_{30}}{27p_{11}}}  \right)$ and $(u_2,v_2)=\left(0, -\sqrt{-\tfrac{2q_{30}}{27p_{11}}}  \right)$.
Considering only the first order terms in $u$, the BDE can be factored as
$$
\left(12p_{11}uvdv+(27p_{11}v^2+2q_{30}+\alpha u)du\right)du=0,
$$
where $\alpha$ is some real number that will not be relevant in our analysis. 
The integral lines of the first factor correspond to the integral lines of the vector field
$X=\left( 12p_{11}uv, -27p_{11}v^2-2q_{30}-\alpha u \right)$.
The determinant of $DX(0,v_i)=-648p_{11}^2v^2<0$, and thus both points are saddles. 
A similar calculation shows that the pullback of the BDE by $H_p$ has no singular points. 
To analyze the behavior of the principal lines close to the $x$-axis,  consider the blowing-up $H_1(u,v)=(uv,v)$. Using a similar analysis as above, it follows that $(0,0)$ is a hyperbolic saddle. 
\end{proof}

\section{ Some conjectures and examples}

\subsection{Conjectures for $m\geq 5$ and $m\not\in\mathbb{N}$}

As we have seen above, for a non-degenerate umbilical point with $m\in\{1,2,3,4\}$ the umbilical point is of type $A_m$, for an open and dense subset of $\mathcal{W}$. Following this pattern, we propose the following conjecture:

\smallskip\noindent
{\bf Conjecture:} Consider a general $m\in\mathbb{N}$. Then, for an open and dense subset of $\mathcal{W}$, the non-degenerate umbilical point is of type $A_m$.

In next subsection, we shall show examples of $A_m$-umbilical points, for any $m\in\mathbb{N}$, but we could not prove that they are stable.  In fact, such a proof depends basically on Equation \eqref{Conjecture1}.

On the other hand,
if $m\not\in\mathbb{N}$, Equation \eqref{Conjecture1} is vacuously true, but we could not deal with the calculations of the Taylor expansion of $\delta$, except in the particular case of the examples of next subsection. Nevertheless, we formulate the following conjecture:

\smallskip\noindent
{\bf Conjecture:} Consider $m\not\in\mathbb{N}$. Then, for an open and dense subset of $\mathcal{W}$, the umbilical point is of type $A_{\infty}$.

\subsection{A class of examples with simple calculations}

In this paragraph we consider the particular case $p_{n0}=0$, for any $n\in\mathbb{N}$. As we shall see, in this class Equation \eqref{Conjecture1} holds, the umbilical points are of type $A_m$ if $m\in\mathbb{N}$ and of type $A_{\infty}$ if $m\not\in\mathbb{N}$, thus supporting the conjectures of the previous subsection. 

\begin{Proposition}\label{prop:Example}
Assume that $p_{n0}=0$, for any $n\in\mathbb{N}$ and consider that $q_{02}+mp_{11}=0$. Then
\begin{enumerate}
\item
$W_{m0}=0$.

\item
$q_{k0}=0,\ \ \ 2\leq k\leq m, $
and, for an open and dense subset of $\{p_{n0}=0,n\in\mathbb{N}\}$, 
$q_{m+1,0}\neq 0. $

\item
$q_{k-1,1}=0,\ \ \ 2\leq k\leq m. $
\end{enumerate}
\end{Proposition}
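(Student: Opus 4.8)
The plan is to exploit the reformulation of the hypothesis $p_{n0}=0$ for all $n$ as the factorization $\xi^1=y\,\phi(x,y)$, with $\phi$ analytic. On the line $\{y=0\}$ this gives $\xi^1=\xi^1_x=\xi^1_{xx}=0$, so many of the terms of $W$ in \eqref{eq:DefineW} drop out there. Using the remark that lets me take $p_{10}=0$, I introduce the one--variable profiles $\alpha(x)=\xi^2(x,0)$, $\mu(x)=\xi^2_y(x,0)$, $\nu(x)=\xi^2_{yy}(x,0)$, $\beta(x)=\xi^1_y(x,0)$ and $\gamma(x)=\xi^1_{yy}(x,0)$, so that $\alpha^{(k)}(0)=q_{k0}$ and $\mu^{(k-1)}(0)=q_{k-1,1}$. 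Conditions \eqref{Conditions2jet} together with the umbilical and normalization conditions give the vanishing orders $\beta(0)=\gamma(0)=0$, $\beta'(0)=p_{11}$, $\alpha'(0)=\mu(0)=\mu'(0)=0$ and $\nu(0)=q_{02}\neq0$; in particular $\beta,\gamma=O(x)$ and $\mu=O(x^2)$. Since the congruence is Weingarten, $W\equiv0$, hence the two analytic functions $W^{(0)}(x):=W(x,0)$ and $W^{(1)}(x):=W_y(x,0)$ vanish identically; claims (1)--(3) will be read off from their Taylor coefficients.

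For claims (1) and (2) I would expand $W^{(0)}$. Substituting $\xi^1=\xi^1_x=\xi^1_{xx}=0$ into \eqref{eq:DefineW} collapses $W^{(0)}$ to
\[
W^{(0)}=\alpha''\,(\mu\gamma+2\beta\beta')+2\alpha'\,(\beta'\nu-\gamma\mu').
\]
Because $\beta\beta'=p_{11}^2x+O(x^2)$ while $\mu\gamma=O(x^3)$, and $\beta'\nu=p_{11}q_{02}+O(x)$ while $\gamma\mu'=O(x^2)$, the leading part of $W^{(0)}$ is the Euler operator $2p_{11}^2x\,\alpha''+2p_{11}q_{02}\,\alpha'$. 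I then argue by induction on $k$: assuming $q_{j0}=0$ for $1\le j<k$ gives $\alpha'=\tfrac{q_{k0}}{(k-1)!}x^{k-1}+O(x^k)$, so the $x^{k-1}$--coefficient of $W^{(0)}$ equals
\[
\frac{2p_{11}}{(k-1)!}\bigl((k-1)p_{11}+q_{02}\bigr)\,q_{k0},
\]
all other contributions being of strictly higher order. For $2\le k\le m$ the factor $(k-1)p_{11}+q_{02}=(k-1-m)p_{11}$ is nonzero, forcing $q_{k0}=0$, which is claim (2). At $k=m+1$ the hypothesis $q_{02}+mp_{11}=0$ makes this factor vanish, so the $x^{m}$--identity $W^{(0)}=0$ holds automatically and imposes no condition on $q_{m+1,0}$; this is exactly $W_{m0}=0$, i.e. claim (1) and equation \eqref{Conjecture1}. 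As $q_{m+1,0}$ is thereby unconstrained, it is nonzero on an open dense subset of the class, completing claim (2).

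For claim (3) I would expand $W^{(1)}=W_y(x,0)$ in the same spirit, writing $\xi^1=y\phi_0+y^2\phi_1+\cdots$ and $\xi^2=\alpha+y\mu+y^2\eta_2+\cdots$ and collecting the coefficient of $y$. The structural point, to be verified term by term, is that \emph{every} monomial of $W^{(1)}$ contains either a derivative of $\alpha$ or a derivative of $\mu$. By claim (2), $\alpha'=O(x^m)$ and $\alpha''=O(x^{m-1})$; since moreover every monomial containing $\alpha''$ carries an additional factor that vanishes at the origin, all $\alpha$--terms of $W^{(1)}$ are $O(x^m)$. Consequently, for orders below $x^{m}$ the identity $W^{(1)}\equiv0$ reduces again to an Euler operator, now $2p_{11}^2x\,\mu''+2p_{11}q_{02}\,\mu'$, whose $x^{j}$--coefficient is $\tfrac{2p_{11}}{j!}(jp_{11}+q_{02})\,q_{j+1,1}$. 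Inducting as before, the factor $jp_{11}+q_{02}=(j-m)p_{11}$ is nonzero for $0\le j\le m-2$, which yields $q_{j+1,1}=0$ for $1\le j+1\le m-1$, that is $q_{k-1,1}=0$ for $2\le k\le m$, proving claim (3).

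The bulk of the work, and the only real obstacle, is the explicit $y$--expansion of \eqref{eq:DefineW} needed to obtain the reduced form of $W^{(0)}$ and, above all, of $W^{(1)}$. Two features must be checked carefully: first, that in both expressions the second--order (Euler) part has leading coefficient exactly proportional to $jp_{11}+q_{02}$, which is what pins the critical index to $j=m$ and produces both the vanishing in claims (2)--(3) and the automatic identity in claim (1); and second, that $W^{(1)}$ has no monomial free of \emph{both} an $\alpha$-- and a $\mu$--derivative, since it is this fact that, via claim (2), lets the recursion for $\mu$ decouple from $\alpha$ at low orders. The cancellation responsible for claim (1) is precisely the defining relation $q_{02}+mp_{11}=0$.
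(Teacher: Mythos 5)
Your proposal is correct and follows essentially the same route as the paper: your Taylor coefficients of $W(x,0)$ and $W_y(x,0)$ are exactly the paper's key identities $W_{n0}=2p_{11}(q_{02}+np_{11})q_{n+1,0}$ and $W_{n-1,1}=2p_{11}(q_{02}+(n-1)p_{11})q_{n1}$, and the structural facts you flag for verification in the expansion of $W_y(x,0)$ (every monomial carries a factor from $\{\alpha',\alpha'',\mu,\mu',\mu''\}$, and every $\alpha''$-monomial carries an extra factor vanishing at the origin) do check out. The only difference is organizational: you establish all of claim (2) before claim (3) using $\alpha'=O(x^m)$, whereas the paper runs a single interleaved induction with the explicit hypothesis $q_{n+1,0}=0$ in its second lemma.
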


The proof is by induction. 
Assume that, for $k\leq n$, 
$$
q_{k0}=q_{k-1,1}=0, \ \ q_{02}+(k-1)p_{11}\neq 0.
$$
Observe that these relations hold for $k=2$. 
We can write
\begin{equation}\label{eq:1}
\xi^1=p_{11}xy+y\eta^1(x,y)
\end{equation}
with $\eta^1=\eta^1_x=\eta^1_y=0$ at the origin
\begin{equation}\label{eq:2}
\xi^2=\frac{q_{02}}{2}y^2+y^2\eta^2(x,y)+\frac{q_{n+1,0}}{(n+1)!}x^{n+1}+\frac{q_{n1}}{n!}x^ny+O(n+2),
\end{equation}
with $\eta^2=0$ at the origin.

\begin{lemma}\label{lemma:Example-1}
We have that  
\begin{equation}\label{eq:Wn0}
W_{n0}(0,0)=2p_{11}(q_{02}+np_{11})q_{n+1,0}.
\end{equation}
\end{lemma}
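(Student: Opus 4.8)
The plan is to exploit the structural consequence of the hypothesis $p_{n0}=0$ for all $n$. Since under this hypothesis every monomial of $\xi^1$ carries at least one factor $y$, Equation \eqref{eq:1} exhibits $\xi^1$ as $\xi^1=y\,g(x,y)$ with $g=p_{11}x+\eta^1$. Consequently every purely-$x$ derivative of $\xi^1$ is again divisible by $y$, so that $\xi^1_x$ and $\xi^1_{xx}$ \emph{vanish identically on the axis} $y=0$, whereas
$$
\xi^1_y|_{y=0}=g(x,0),\qquad \xi^1_{xy}|_{y=0}=g_x(x,0),\qquad \xi^1_{yy}|_{y=0}=2g_y(x,0).
$$
This is the decisive simplification, since it kills the genuinely nonlinear cross-terms in \eqref{eq:DefineW}.

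Next, because $W_{n0}(0,0)=\partial_x^n W(0,0)=\frac{d^n}{dx^n}\big[W(x,0)\big]\big|_{x=0}$, it suffices to compute the restriction $W(x,0)$ and take $n!$ times its coefficient of $x^n$. Writing the $y$-Taylor expansion $\xi^2=\phi_0(x)+\phi_1(x)y+\phi_2(x)y^2+\cdots$, I would read off from \eqref{eq:2}, the induction hypotheses $q_{k0}=q_{k-1,1}=0$, and the $2$-jet normalizations $p_{01}=p_{02}=p_{20}=q_{01}=q_{10}=q_{11}=0$, the leading orders
$$
\phi_0(x)=\tfrac{q_{n+1,0}}{(n+1)!}x^{n+1}+O(x^{n+2}),\qquad \phi_1(x)=\tfrac{q_{n1}}{n!}x^{n}+O(x^{n+1}),\qquad \phi_2(0)=\tfrac{q_{02}}{2},
$$
together with $g(x,0)=p_{11}x+O(x^2)$, $g_x(x,0)=p_{11}+O(x)$, and $g_y(x,0)=O(x)$ (the last because $p_{02}=0$). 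Restricting the three summands of \eqref{eq:DefineW} to $y=0$ and using that $\xi^1_x,\xi^1_{xx}$ die there, the first summand becomes $2\phi_1\phi_0''\,g_y(x,0)$, the middle summand $-2\xi^1_y(\xi^1_{xx}\xi^2_{xy}-\xi^2_{xx}\xi^1_{xy})$ collapses to $+2\,g(x,0)\,\phi_0''(x)\,g_x(x,0)$, and the last summand becomes $4\phi_0'\big[g_x(x,0)\phi_2(x)-g_y(x,0)\phi_1'(x)\big]$.

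A degree count is then decisive. Since $\phi_0''=O(x^{n-1})$, $\phi_0'=O(x^{n})$, $\phi_1=O(x^n)$, $\phi_1'=O(x^{n-1})$ and $g_y(x,0)=O(x)$, every product containing $\phi_1$, $\phi_1'$, or a factor $g_y(x,0)$ is $O(x^{2n})$ and hence invisible at degree $n$. Exactly two products reach degree $n$: from $\phi_0''=\tfrac{q_{n+1,0}}{(n-1)!}x^{n-1}+\cdots$ the product $2g(x,0)\phi_0''g_x(x,0)$ contributes $\tfrac{2p_{11}^2 q_{n+1,0}}{(n-1)!}$, and from $\phi_0'=\tfrac{q_{n+1,0}}{n!}x^{n}+\cdots$ the product $4\phi_0'g_x(x,0)\phi_2(x)$ contributes $\tfrac{2p_{11}q_{02}q_{n+1,0}}{n!}$ to the coefficient of $x^n$. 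Multiplying the sum by $n!$ yields $2np_{11}^2q_{n+1,0}+2p_{11}q_{02}q_{n+1,0}=2p_{11}(q_{02}+np_{11})q_{n+1,0}$, which is precisely \eqref{eq:Wn0}.

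The only delicate point is the bookkeeping. One must check that in each of the two surviving products a \emph{single} combination of leading factors realizes degree $n$ (this holds because $\phi_0''$ has no monomial below $x^{n-1}$, while $g$ and $g_x$ start at orders $1$ and $0$, forcing the splittings $1+(n-1)+0$ and $n+0+0$ respectively), and one must carry the combinatorial constants obtained by differentiating $x^{n+1}$ correctly through the final multiplication by $n!$. Everything downstream is routine once the vanishing of $\xi^1_x$ and $\xi^1_{xx}$ on the axis has eliminated the higher-order interactions.
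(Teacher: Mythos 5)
Your proof is correct and follows essentially the same route as the paper's: both arguments hinge on the observation that $\xi^1_x$ and $\xi^1_{xx}$ vanish along $y=0$, reduce $W$ to the same two surviving products, and extract the contributions $2np_{11}^2q_{n+1,0}$ and $2p_{11}q_{02}q_{n+1,0}$. Your reformulation as a one-variable Taylor-coefficient computation of $W(x,0)$ is just a slightly more systematic organization of the paper's term-by-term derivative bookkeeping.
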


\begin{proof}
From Equation \eqref{eq:1}, the terms involving $\xi^1_x$ and $\xi^1_{xx}$ does not appear in $W_{n0}$. From Equation \eqref{eq:2}, the terms involving
$\xi^2_{xx}$ appear in $W_{n0}$ only with its derivative of order $(n-1)$, which is exactly $q_{n+1,0}$, while the terms involving $\xi^2_{x}$ appear only with its derivative of order $n$, which is again $q_{n+1,0}$.

Thus, for the task of calculating $W_{n0}$, we can consider only the terms $\bar{W}$ of $W$ as follows:
\begin{equation*}
\bar{W}= \xi^2_y\xi^2_{xx}\xi^1_{yy}+2\xi^1_y\xi^2_{xx}\xi^1_{xy}+2\xi^2_x\xi^1_{xy}\xi^2_{yy}-2\xi^2_x\xi^1_{yy}\xi^2_{xy}.
\end{equation*}
Now since $\xi^2_y(0,0)=\xi^2_{xy}(0,0)=0$, the first parcel can be discarded. Moreover, since  $\xi^1_{yy}(0,0)=\xi^2_{xy}(0,0)=0$, the last parcel can also be discarded. The derivative of order $n$ of the third parcel gives $2q_{n+1,0}p_{11}q_{02}$, while the derivative of order $n$ of the second parcel at the origin gives exactly $2np_{11}^2q_{n+1,0}$.  Summing up we get formula \eqref{eq:Wn0}.
\end{proof}

From this lemma, we conclude that $W_{m0}=0$ and $q_{k0}=0$, for $k\leq m$. Moreover, for an open and dense subset of $\{p_{n0}=0,n\in\mathbb{N}\}$, $q_{m+1,0}\neq 0$, thus proving the first and second items of Proposition \ref{prop:Example}. 

\begin{lemma}\label{lemma:Example-2}
If $q_{n+1,0}=0$, then
\begin{equation}\label{eq:Wn1}
W_{n-1,1}=2p_{11}(q_{02}+(n-1)p_{11})q_{n1}.
\end{equation}
\end{lemma}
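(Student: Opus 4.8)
The plan is to follow the template of Lemma \ref{lemma:Example-1}, but now to read off the coefficient of $x^{n-1}y$ in $W$ rather than that of $x^n$. Since $W_{n-1,1}$ equals $(n-1)!$ times the coefficient of $x^{n-1}y$ in the Taylor expansion of $W$, and this is in turn $(n-1)!$ times the coefficient of $x^{n-1}$ in the $y$-linear part $\partial_y W|_{y=0}$, I would first expand Formula \eqref{eq:DefineW} into its eight parcels (each a product of three first or second derivatives of $\xi^1,\xi^2$) and record, for each of the three factors in each parcel, its restriction to $y=0$ and its $y$-derivative at $y=0$, viewed as functions of $x$, using the normal forms \eqref{eq:1} and \eqref{eq:2} together with $q_{n+1,0}=0$.

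The key structural point, exactly as in Lemma \ref{lemma:Example-1}, is that the factors $\xi^1_x=y(p_{11}+\eta^1_x)$ and $\xi^1_{xx}=y\,\eta^1_{xx}$ carry an exact factor $y$, while $\xi^2_x$ and $\xi^2_{xx}$ carry a factor $y$ through all orders that can affect $W_{n-1,1}$; it is here that $q_{n+1,0}=0$ is used, since it removes the pure power $x^{n+1}$ of $\xi^2$ that would otherwise produce low-degree $y$-free contributions to $\xi^2_x$ and $\xi^2_{xx}$. Because we extract the $y$-linear part, any parcel containing two of these $y$-factors contributes a multiple of $y^2$ and is discarded; this eliminates $\xi^1_x\xi^2_{xx}\xi^1_{yy}$ and $\xi^1_x\xi^1_{xx}\xi^2_{yy}$. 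Each of the remaining six parcels has exactly one $y$-factor, and its $y$-linear part is the $y$-derivative of that factor at $y=0$ times the $y=0$ restrictions of the other two factors.

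Next I would carry out the bookkeeping of $x$-degrees, using that $q_{n1}$ enters only through $\xi^2_x,\xi^2_{xx},\xi^2_y,\xi^2_{xy}$, via the terms $\tfrac{q_{n1}}{(n-1)!}x^{n-1}y$, $\tfrac{q_{n1}}{(n-2)!}x^{n-2}y$, $\tfrac{q_{n1}}{n!}x^n$ and $\tfrac{q_{n1}}{(n-1)!}x^{n-1}$. I expect four of the six surviving parcels to have $x$-degree at least $n$ and hence to contribute nothing to $x^{n-1}$: in each of them the $q_{n1}$-carrying term is forced to multiply either the degree-$n$ restriction $\xi^2_y|_{y=0}=\tfrac{q_{n1}}{n!}x^n+\dots$ or one of the factors $\xi^1_y,\xi^1_{yy}$ that vanish at the origin. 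Only the two parcels $2\xi^1_y\xi^2_{xx}\xi^1_{xy}$ and $2\xi^2_x\xi^1_{xy}\xi^2_{yy}$ remain, exactly the two that governed Lemma \ref{lemma:Example-1}.

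Finally, using $\xi^1_y=p_{11}x+O(2)$, $\xi^1_{xy}=p_{11}+O(1)$ and $\xi^2_{yy}=q_{02}+O(1)$ together with the recorded $q_{n1}$-terms, the parcel $2\xi^1_y\xi^2_{xx}\xi^1_{xy}$ yields the coefficient $\tfrac{2p_{11}^2 q_{n1}}{(n-2)!}$ and the parcel $2\xi^2_x\xi^1_{xy}\xi^2_{yy}$ yields $\tfrac{2p_{11}q_{02}q_{n1}}{(n-1)!}$; multiplying their sum by $(n-1)!$ gives $W_{n-1,1}=2p_{11}^2(n-1)q_{n1}+2p_{11}q_{02}q_{n1}=2p_{11}(q_{02}+(n-1)p_{11})q_{n1}$, as asserted. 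The step I expect to be most delicate is this degree bookkeeping: one must make sure that the unknown background terms $\eta^1,\eta^2$ and the remainder $O(n+2)$ can never raise a discarded parcel back to degree $x^{n-1}$ in the $y$-linear part, which is guaranteed precisely by $q_{n+1,0}=0$ and by the vanishing of $\xi^1_y$ and $\xi^1_{yy}$ at the origin.
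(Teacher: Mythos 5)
Your proposal is correct and is essentially the paper's own argument recast in the language of Taylor coefficients rather than iterated derivatives at the origin: you isolate the same two surviving parcels $2\xi^1_y\xi^2_{xx}\xi^1_{xy}$ and $2\xi^2_x\xi^1_{xy}\xi^2_{yy}$, discard the others by the same vanishing/degree considerations (divisibility by $y$ of $\xi^1_x,\xi^1_{xx}$ and, via $q_{n+1,0}=0$, of the relevant part of $\xi^2_x,\xi^2_{xx}$, plus the fact that $\xi^2_y|_{y=0}$, $\xi^2_{xy}|_{y=0}$ start in degree $n$ and $n-1$ while $\xi^1_y,\xi^1_{yy}$ vanish at the origin), and obtain the same two contributions $2(n-1)p_{11}^2q_{n1}$ and $2p_{11}q_{02}q_{n1}$.
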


\begin{proof}
For calculating the derivative of order $(n-1,1)$ of the last parcel of $W$, all derivatives must be applied to $\xi^2_x$ and so it results in $2q_{n1}p_{11}q_{02}$. For the terms involving $\xi^2_{xx}$, by the hypothesis $q_{n+1,0}=0$, we must necessarily apply the $(n-2,1)$-derivative in it. Thus we get $2(n-1)p_{11}^2q_{n1}$.

The derivative of order $(n-1,1)$ of the term involving $\xi^1_x\xi^1_{xx}$ is clearly zero, and so we need only to look at the terms $\bar{W}$ below:
\begin{equation*}
\bar{W}= -\xi^2_y\xi^1_{xx}\xi^2_{yy}-2\xi^1_y\xi^1_{xx}\xi^2_{xy}.
\end{equation*}
For the second parcel, we must apply the $y$-derivative to $\xi^1_{xx}$, and then the $(n-1)$ $x$-derivatives to $\xi^2_{xy}$. But since $\xi^1_y=0$ at the origin, we conclude that the $(n-1,1)$ derivative of this term is zero. For the first parcel, we have to apply the $y$-derivative to $\xi^1_{xx}$ and then any number of $x$-derivatives to $\xi^2_{y}$ result in zero. 
Summing all this parcels, we get formula \eqref{eq:Wn1}.
\end{proof}

From this second lemma, we conclude that $q_{k1}=0$, for $k\leq m$, thus proving the third item of Proposition \ref{prop:Example}. We shall verify below that, for the examples in this paragraph, the discriminant function is of type $A_m$ at the origin.

\begin{Proposition}\label{prop:ExampleDiscriminant}
If $m\in\mathbb{N}$, the origin is a singular point of $\delta$ of type $A_m$. If $m\not\in\mathbb{N}$, the origin is a singular point of $\delta$ of type $A_{\infty}$.
\end{Proposition}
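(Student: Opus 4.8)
The plan is to exploit the very special shape that the hypotheses $p_{n0}=0$ and Proposition~\ref{prop:Example} impose on $\xi$, and then to reduce the computation of the type of $\delta=(\xi^2_y-\xi^1_x)^2+4\xi^1_y\xi^2_x$ to a one-variable order count via the splitting lemma. First I would record the structural consequences. Since $p_{n0}=0$ for all $n$, the series $\xi^1$ contains no pure power of $x$, so $\xi^1=y\,\Phi(x,y)$ with $\Phi(x,y)=p_{11}x+\cdots$ and $\Phi_x(0,0)=p_{11}\neq0$. Grouping $\xi^2$ by powers of $y$, I write $\xi^2=\psi(x)+y\,\chi(x)+y^2\rho(x,y)$, where $\psi(x)=\sum_n \tfrac{q_{n0}}{n!}x^n$, $\chi(x)=\sum_j \tfrac{q_{j1}}{j!}x^j$ and $\rho(0,0)=\tfrac12 q_{02}\neq0$. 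By Proposition~\ref{prop:Example} one has $q_{k0}=0$ for $k\le m$ with $q_{m+1,0}\neq0$ generically, and $q_{k-1,1}=0$ for $2\le k\le m$ together with $q_{11}=0$. Hence $\psi(x)=\tfrac{q_{m+1,0}}{(m+1)!}x^{m+1}+O(x^{m+2})$ vanishes to order \emph{exactly} $m+1$, while $\chi$ vanishes to order $\ge m+1$.

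Next I would pin down the $2$-jet of $\delta=P^2+Q$, where $P=\xi^2_y-\xi^1_x$ and $Q=4\xi^1_y\xi^2_x$. A short jet computation gives $P(0,0)=0$, $P_x(0,0)=q_{11}-p_{20}=0$ and $P_y(0,0)=q_{02}-p_{11}=-(m+1)p_{11}$, so $P=-(m+1)p_{11}\,y+O(2)$ and $P^2$ contributes $(m+1)^2p_{11}^2\,y^2$ to the $2$-jet and nothing else at order $2$. For $m\ge2$ one has $q_{20}=0$, so $\xi^2_x$ vanishes to order $\ge m$ along $y=0$ and $Q=O(x^{m+1})$ makes no order-$2$ contribution; thus the $2$-jet of $\delta$ is $(m+1)^2p_{11}^2\,y^2$, i.e. $\delta_{02}=2(m+1)^2p_{11}^2\neq0$ with the $x$-axis as kernel of the Hessian. (For $m=1$ one instead obtains the nondegenerate form $4p_{11}(p_{11}y^2+q_{20}x^2)$, already the Morse case $A_1$.)

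Since $\delta_{yy}(0,0)\neq0$, I would invoke the analytic splitting lemma to obtain a coordinate change $Y=Y(x,y)$ with $\delta=\pm Y^2+g(x)$, where $g(x)=\delta(x,h(x))$ and $y=h(x)$ is the analytic solution of $\delta_y=0$ with $h(0)=0$; then $\delta$ is of type $A_m$ exactly when $\mathrm{ord}_0\,g=m+1$. The decisive estimate is that $h$ has high order: from $\delta_y=2PP_y+Q_y$ with $P_y(0,0)=-(m+1)p_{11}\neq0$ and $Q_y|_{y=0}=O(x^{m+1})$ one gets $h(x)=O(x^{m+1})$. Substituting back, $P(x,h(x))=O(x^{m+1})$ so $P^2=O(x^{2m+2})$ is negligible, while $Q(x,h(x))=Q(x,0)+O(x^{m+2})=4\Phi(x,0)\psi'(x)+O(x^{m+2})=\tfrac{4p_{11}q_{m+1,0}}{m!}x^{m+1}+O(x^{m+2})$. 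Hence $g$ has a zero of order exactly $m+1$ and $\delta$ is of type $A_m$. For $m\notin\mathbb{N}$ the argument collapses: since $q_{02}+kp_{11}\neq0$ for every $k\in\mathbb{N}$, Lemmas~\ref{lemma:Example-1} and~\ref{lemma:Example-2} force $q_{n0}=0$ and $q_{n1}=0$ for all $n$, so $\psi\equiv\chi\equiv0$ and $\xi^2=y^2\rho$; then $P$ and $\xi^1_x$ are divisible by $y$ and $Q$ by $y^2$, giving $\delta=y^2U(x,y)$ with $U(0,0)=(q_{02}-p_{11})^2=(m+1)^2p_{11}^2\neq0$ (non-degeneracy excludes $m=-1$), so $U$ is a unit and $\delta\sim y^2$, i.e. type $A_\infty$.

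The main obstacle is the order bookkeeping inside the splitting step: one must verify that neither the correction $h(x)$ nor the cross terms of $Q(x,h(x))$ can feed back into order $m+1$, which is exactly what the vanishing orders of $\psi$ and $\chi$ from Proposition~\ref{prop:Example} guarantee. Once $\delta_{yy}(0,0)\neq0$ is established the reduction itself is routine, so the essential content is the bound $h(x)=O(x^{m+1})$ together with the identification of the leading coefficient $\tfrac{4p_{11}q_{m+1,0}}{m!}$.
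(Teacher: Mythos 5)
Your argument is correct in substance but follows a genuinely different route from the paper. The paper proceeds by induction on the order of the jet of $\delta$: assuming the $n$-jet has the form $(p_{11}-q_{02})^2y^2+y^2h(x,y)$, it computes the only two coefficients of the $(n+1)$-jet not automatically divisible by $y^2$, namely $\delta_{n+1,0}=4(n+1)p_{11}q_{n+1,0}$ and $\delta_{n1}=2(2n-m-1)p_{11}q_{n1}$, concludes that $\delta_{k0}=0$ for $k\le m$, $\delta_{m+1,0}\ne 0$ generically and $\delta_{k1}=0$ for $k\le m-1$, and then invokes the $A_m$ recognition criteria; for $m\notin\mathbb{N}$ it argues that the point is not $A_k$ for any finite $k$. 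You instead package the consequences of Proposition~\ref{prop:Example} into the factorizations $\xi^1=y\Phi$, $\xi^2=\psi+y\chi+y^2\rho$ and reduce everything to the order of the one-variable function $g(x)=\delta(x,h(x))$ via the splitting lemma. Your leading coefficient $\tfrac{4p_{11}q_{m+1,0}}{m!}$ agrees with the paper's $\delta_{m+1,0}/(m+1)!$, and your treatment of $m\notin\mathbb{N}$, exhibiting $\delta=y^2U$ with $U$ a unit, is cleaner and more conclusive than the paper's. The uniform-in-$m$ recognition of the $A_m$ type is a real advantage of your reduction, since the paper only quotes explicit criteria for $m\le 4$.

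One bookkeeping slip should be repaired: Proposition~\ref{prop:Example} gives $q_{k-1,1}=0$ only for $2\le k\le m$, i.e.\ $q_{j1}=0$ for $j\le m-1$, so $\chi$ vanishes to order $\ge m$, not $\ge m+1$ (the coefficient $q_{m1}$ is in general nonzero). Consequently $P(x,0)=\chi(x)=O(x^m)$, $\delta_y(x,0)=O(x^m)$ and hence $h(x)=O(x^m)$ rather than $O(x^{m+1})$. This does not damage the conclusion: the contaminating terms $P(x,h(x))^2$ and $Q_y(x,0)h(x)$ are then $O(x^{2m})$, and $2m\ge m+2$ for $m\ge 2$ (the case $m=1$ being the Morse case you treat separately from the $2$-jet), so the order of $g$ is still exactly $m+1$. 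You should state the estimate as $h=O(x^m)$ and add the inequality $2m>m+1$ explicitly.
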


Assume that the $n$-jet of $\delta$ is given by 
$$
\delta_n(x,y)=(p_{11}-q_{02})^2y^2+y^2h(x,y),
$$
where $h(x,y)=\sum_{k=1}^{n-2}h_k(x,y)$ and $h_k(x,y)$ is a homogeneous polynomial of degree $k$. 

\begin{lemma}
We have that
$$
\delta_{n+1,0}(0,0)=4(n+1)p_{11}q_{n+1,0}.
$$
\end{lemma}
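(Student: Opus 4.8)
The plan is to start from the explicit expression of the discriminant in terms of $\xi$. Substituting $a=-\xi^1_x$, $b=-\xi^1_y$, $c=-\xi^2_x$, $d=-\xi^2_y$ (valid when $f=(x,y)$) into $\delta=(a-d)^2+4bc$ gives
$$
\delta=(\xi^1_x-\xi^2_y)^2+4\,\xi^1_y\,\xi^2_x.
$$
Since $\delta_{n+1,0}(0,0)$ involves only derivatives with respect to $x$, it equals $(n+1)!$ times the coefficient of $x^{n+1}$ in the restriction $\delta(x,0)$. Thus the whole computation reduces to isolating the $x^{n+1}$ monomial of $\delta(x,0)$, which I would do by reading off how each of the four factors behaves along $y=0$.

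First I would record these restrictions using the normal forms \eqref{eq:1} and \eqref{eq:2}. Because $\xi^1=p_{11}xy+y\eta^1$ is divisible by $y$, so is $\xi^1_x$; hence $\xi^1_x|_{y=0}=0$ together with all of its $x$-derivatives. On the other hand $\xi^1_y|_{y=0}=p_{11}x+O(x^2)$, since $\eta^1$ vanishes to second order at the origin. From \eqref{eq:2}, the block $y^2\eta^2$ and its $x$-derivatives are divisible by $y$, so they drop out along $y=0$, and what survives is
$$
\xi^2_x|_{y=0}=\tfrac{q_{n+1,0}}{n!}x^n+O(x^{n+1}),\qquad \xi^2_y|_{y=0}=\tfrac{q_{n1}}{n!}x^n+O(x^{n+1}).
$$

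With these expansions, the squared term contributes $(\xi^1_x-\xi^2_y)^2|_{y=0}=(\xi^2_y|_{y=0})^2=O(x^{2n})$, which is of order at least $n+2$ (recall $n\geq 2$) and so contributes nothing to the coefficient of $x^{n+1}$. Hence the $x^{n+1}$-term comes solely from $4\,\xi^1_y\,\xi^2_x$, and
$$
4\,\xi^1_y\,\xi^2_x\big|_{y=0}=4\bigl(p_{11}x+O(x^2)\bigr)\bigl(\tfrac{q_{n+1,0}}{n!}x^n+O(x^{n+1})\bigr)=\tfrac{4p_{11}q_{n+1,0}}{n!}\,x^{n+1}+O(x^{n+2}).
$$
Multiplying the coefficient $\tfrac{4p_{11}q_{n+1,0}}{n!}$ by $(n+1)!$ then yields $\delta_{n+1,0}(0,0)=4(n+1)p_{11}q_{n+1,0}$, as claimed.

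The computation is essentially routine, so there is no genuine obstacle; the only point demanding care is the bookkeeping of the remainder terms. One must check that the $y^2\eta^2$ block, the $O(n+2)$ tail of $\xi^2$, and the second-order part of $\eta^1$ all feed into $x$-powers of order strictly larger than $n+1$ after restriction to $y=0$, so that the single product $p_{11}x\cdot\tfrac{q_{n+1,0}}{n!}x^n$ is the only source of the $x^{n+1}$ monomial. This is immediate from the divisibility by $y$ of $\xi^1_x$ and of the $y^2\eta^2$ part of $\xi^2_x$, together with $\xi^1_y|_{y=0}$ starting at order $x$; in particular there is no delicate cancellation to track.
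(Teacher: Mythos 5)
Your proof is correct and follows essentially the same route as the paper: both arguments reduce to observing that $\xi^1_x$ and the $y$-divisible part of $\xi^2$ contribute nothing to pure $x$-derivatives at the origin, so the only source of $\delta_{n+1,0}$ is the cross term $4\xi^1_y\xi^2_x$ via the product $p_{11}x\cdot\tfrac{q_{n+1,0}}{n!}x^n$. Your packaging via restriction to $y=0$ and reading off the coefficient of $x^{n+1}$ is a slightly cleaner way of organizing the same derivative count that the paper performs term by term.
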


\begin{proof}
Recall that
$$
\delta=(\xi^1_x-\xi^2_y)^2+4\xi^1_y\xi^2_x.
$$
For the first parcel, the first derivative is , up to a factor $2$,
\begin{equation}\label{eq:deltax1}
(\xi^1_x-\xi^2_y)(\xi^1_{xx}-\xi^2_{xy})=\xi^1_x(\xi^1_{xx}-\xi^2_{xy})-\xi^1_{xx}\xi^2_y+\xi^2_{xy}\xi^2_y.
\end{equation}
Differentiating the parcels involving $\xi^1_x$ or $\xi^1_{xx}$ at the origin will give always zero.  For the last parcel to become non-zero, we need at least $2n-1$ $x$-derivatives. Since $2n-1>n$ for $n>1$, the first parcel can be discarded.

For the second parcel, we use one derivative for the factor $\xi^1_y$ and $n$ derivatives for $\xi^2_x$. But $\xi^2_{n+1,0}=q_{n+1,0}$, and the lemma is proved.
\end{proof}

\begin{lemma}
We have that
$$
\delta_{n1}(0,0)=2 (2n-m-1)p_{11}q_{n1}.
$$
\end{lemma}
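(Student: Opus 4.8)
The plan is to imitate the computation of the preceding lemma, working directly with the formula $\delta=(\xi^1_x-\xi^2_y)^2+4\xi^1_y\xi^2_x$ together with the normal forms \eqref{eq:1} and \eqref{eq:2}, and to extract the coefficient of the monomial $x^ny$, since $\delta_{n1}(0,0)=n!\,[x^ny]\,\delta$. I would set $P=\xi^1_x-\xi^2_y$ and treat the two parcels $P^2$ and $4\xi^1_y\xi^2_x$ separately, the key structural remark being that every term of $\xi^1$ beyond $p_{11}xy$ carries a factor $y$, while every term of $\xi^2$ not displayed in \eqref{eq:2} carries a factor $y^2$.

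First I would expand $P$. From \eqref{eq:1} one gets $\xi^1_x=p_{11}y+y\eta^1_x$, which carries a global factor $y$; from \eqref{eq:2} one gets $\xi^2_y=q_{02}y+2y\eta^2+y^2\eta^2_y+\tfrac{q_{n1}}{n!}x^n+O(n+1)$. Hence $P=(p_{11}-q_{02})y-\tfrac{q_{n1}}{n!}x^n+(\text{remainder})$, where each remainder term either carries a factor $y$ multiplied by a positive $x,y$-order or is $O(n+1)$. The crucial point is that the only pure power of $x$ of order $\le n$ occurring in $P$ is $-\tfrac{q_{n1}}{n!}x^n$. Consequently the monomial $x^ny$ can be produced in $P^2$ only through the cross product of the degree-one term $(p_{11}-q_{02})y$ with this pure $x^n$ term, every other pairing having $y$-degree $\ge 2$ or total degree $>n+1$. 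This yields the first-parcel contribution $-2(p_{11}-q_{02})q_{n1}$.

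Next, for $4\xi^1_y\xi^2_x$ I would use $\xi^1_y=p_{11}x+O(2)$ and $\xi^2_x=\tfrac{q_{n+1,0}}{n!}x^n+\tfrac{q_{n1}}{(n-1)!}x^{n-1}y+y^2\eta^2_x+O(n+1)$. The only product whose $y$-degree is exactly one and whose total degree is $n+1$ is $p_{11}x\cdot\tfrac{q_{n1}}{(n-1)!}x^{n-1}y$: the $x^n$ piece of $\xi^2_x$ is a pure power of $x$, the $y^2\eta^2_x$ piece forces $y$-degree $\ge 2$, and the $O(2)$ part of $\xi^1_y$ pushes the total degree above $n+1$. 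Since $n!/(n-1)!=n$, this gives the second-parcel contribution $4np_{11}q_{n1}$. Adding the two parcels and substituting the standing hypothesis $q_{02}=-mp_{11}$, so that $p_{11}-q_{02}=(m+1)p_{11}$, I obtain $\delta_{n1}(0,0)=-2(m+1)p_{11}q_{n1}+4np_{11}q_{n1}=2(2n-m-1)p_{11}q_{n1}$, as claimed.

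The only delicate point, and hence the main obstacle, is the bookkeeping of which monomials coming from $\eta^1$ and $\eta^2$ could feed into $x^ny$. Unlike the previous lemma, where the first parcel $P^2$ could simply be discarded, here it contributes an essential term, so the vanishing arguments must be run for \emph{both} parcels. Everything rests on the structural observation recorded above: the extra $y$-factor in the tail of $\xi^1$ and the extra $y^2$-factor in the tail of $\xi^2$ force all such spurious products to have $y$-degree $\ge 2$ or total order exceeding $n+1$, and therefore none of them can contribute to the coefficient of $x^ny$. Once this is made precise, the remainder is the same bilinear coefficient extraction already used for $\delta_{n+1,0}$.
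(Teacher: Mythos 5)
Your proof is correct and follows essentially the same route as the paper: the same decomposition of $\delta$ into the parcels $(\xi^1_x-\xi^2_y)^2$ and $4\xi^1_y\xi^2_x$, the same identification of the only contributing products (the cross term $(p_{11}-q_{02})y\cdot x^n$ and the product $p_{11}x\cdot x^{n-1}y$), and the same final arithmetic after substituting $q_{02}=-mp_{11}$. The only difference is cosmetic — you extract the coefficient of $x^ny$ from the normal forms \eqref{eq:1} and \eqref{eq:2}, while the paper counts Leibniz terms in the iterated derivative \eqref{eq:deltax1} — and your closing observation about $y$-degree $\geq 2$ versus total order $>n+1$ correctly disposes of all spurious terms.
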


\begin{proof}
For the first parcel, consider the $x$-derivative of delta given by formula \eqref{eq:deltax1}. The parcels $\xi^1_x\xi^1_{xx}$ and $-\xi^1_{xx}\xi^2_y$ can clearly be discarded. For the last parcel in formula \eqref{eq:deltax1} to become non-zero, we need a $y$-derivative for $\xi^2_y$ and it rests $(n-1)$ $x$-derivatives for $\xi^2_{xy}$, obtaining $q_{02}q_{n1}$.  Similarly, for the parcel $-\xi^1_x\xi^2_{xy}$, we need one $y$-derivative for $\xi^1_x$ and $n$ $x$-derivatives for $\xi^2_{xy}$, and so we get $-p_{11}q_{n1}$. Summing up, the first parcel of $\delta$ contributes with
$$
2q_{02}q_{n1}-2p_{11}q_{n1}.
$$
For the second parcel of $\delta$, we need one $x$-derivative for $\xi^1_y$ to become non-zero, and so it rests a $(n-1,1)$-derivative for $\xi^2_x$, in which case we obtain $4np_{11}q_{n1}$. Summing up, we get
$$
2q_{02}q_{n1}+(4n-2)p_{11}q_{n1},
$$
thus proving the lemma.
\end{proof}

From the above lemmas, we obtain that $\delta_{k0}=0$, for $k\leq m$ and generically $\delta_{m+1,0}\neq 0$.  Moreover $\delta_{k1}=0$, for $k\leq m-1$. We conclude that the origin is a singularity of type $A_m$ for $\delta$, thus proving the first assertion of Proposition \ref{prop:ExampleDiscriminant}. 

If $m\not\in\mathbb{N}$, the point is not $A_k$, for any  $k\in\mathbb{N}$, and so is of type $A_{\infty}$, thus proving the second assertion of Proposition \ref{prop:ExampleDiscriminant}. In this case, $\delta=\delta_x=\delta_y=0$ for any point in the $x$-axis. We conclude that, since it is non-degenerate, the origin is an isolated umbilical point, but the points of the $x$-axis outside the origin are discriminant non-umbilical points.

\end{document}